\newcommand{\RR}{\mathbb R}
\newcommand{\ml}{\mathcal{L}}
\newcommand{\mn}{\mathcal{N}}
\newcommand{\mb}{\mathcal{B}}
\newcommand{\mc}{\mathcal{C}}
\newcommand{\mj}{\mathcal{J}}
\newcommand{\bqnn}{\begin{equation*}}
\newcommand{\eqnn}{\end{equation*}}
\newcommand{\bqn}{\begin{equation}}
\newcommand{\eqn}{\end{equation}}
\DeclareMathAlphabet{\mathpzc}{OT1}{pzc}{m}{it}
\newtheorem{theorem}{Theorem}[section]
\newtheorem{lemma}[theorem]{Lemma}
\newtheorem{proposition}[theorem]{Proposition}
\newtheorem{remark}[theorem]{Remark}
\numberwithin{equation}{section}
\newcommand{\e}{\varepsilon}
\begin{document}
\title{Global weak solutions for a degenerate parabolic system modeling the spreading of insoluble surfactant} 
\thanks{This work was partially supported by the french-german PROCOPE project 20190SE}
\author{Joachim Escher}
\address{Leibniz Universit\"at Hannover, Institut f\"ur Angewandte Mathematik, Welfengarten 1, D--30167 Hannover, Germany}
\email{escher@ifam.uni-hannover.de}
\author{Matthieu Hillairet}
\address{Institut de Math\'{e}matiques de Toulouse, CNRS UMR~5219, Universit\'{e} de Toulouse, F--31062 Toulouse cedex 9, France}
\email{matthieu.hillairet@math.univ-toulouse.fr}
\author{Philippe Lauren\c cot}
\address{Institut de Math\'ematiques de Toulouse, CNRS UMR~5219, Universit\'e de Toulouse, F--31062 Toulouse cedex 9, France} 
\email{laurenco@math.univ-toulouse.fr}
\author{Christoph Walker}
\address{Leibniz Universit\"at Hannover, Institut f\"ur Angewandte Mathematik, Welfengarten 1, D--30167 Hannover, Germany}
\email{walker@ifam.uni-hannover.de}

\keywords{}
\subjclass{}

\date{\today}

\begin{abstract}
We prove global existence of a nonnegative weak solution to a degenerate parabolic system, which models the spreading of insoluble surfactant on a thin liquid film.
\end{abstract}

\maketitle

%
%
\pagestyle{myheadings}
\markboth{\sc{J.~Escher, M.~Hillairet, Ph.~Lauren\c cot, and Ch.~Walker}}{\sc{A degenerate parabolic system modeling the spreading of insoluble surfactant}}

\section{Introduction}\label{sec:int}

It is a widely used approach in the study of the dynamical behavior of viscous thin films to approximate
the full fluid mechanical system by simpler model equations, using e.g. lubrication theory  and cross-sectional averaging. 
In most of such models surface tension effects may then become significant, or even dominant. Therefore, also the influence of surfactant, i.e. surface active agents on the free surface of thin films, is of considerable importance. A surfactant lowers the surface tension of the liquid and the resulting gradients of surface tension induce so-called Marangoni stresses which in turn cause a spreading of the surfactant on the interface. We investigate here a model in which the surfactant is assumed to be insoluble. In addition we include gravity but neglect effects of capillarity and van der Waals forces. Writing $h(t,x)$ for the film thickness and $\Gamma(t,x)$ for the concentration of surfactant at time $t>0$ and position $x\in (0,L)$,
Jensen and Grotberg derived in \cite{JeGr92, JeGr93} the following system:
\begin{eqnarray}
\partial_t h & =& \partial_x \left( \frac{G h^3}{3}\ \partial_x h - \frac{h^2}{2}\ \partial_x \sigma(\Gamma) \right) \;\;\mbox{ in }\;\;  Q_\infty\,, \label{a1} \\
\partial_t \Gamma & = & \partial_x \left( \frac{G h^2}{2}\ \Gamma\ \partial_x h + \left( D - h\ \Gamma\ \sigma'(\Gamma) \right)\ \partial_x \Gamma \right) \;\;\mbox{ in }\;\; Q_\infty\,. \label{a2} 
\end{eqnarray}
Here $Q_\infty:= (0,\infty)\times (0,L)$ denotes the time-space domain of the unknowns $h$ and $\Gamma$, with $L$ being the spatial horizontal latitude of the system. We further impose no-flux boundary condition for $h$ and $\Gamma$, i.e.
\begin{equation}
\partial_x h  =  \partial_x \Gamma = 0\;\;\mbox{ on }\;\; (0,\infty)\times \{0,L\}\,, \label{a3}
\end{equation}
as well as initial conditions for these quantities:
\begin{equation}
(h,\Gamma)(0)  =  (h_0,\Gamma_0) \;\;\mbox{ in }\;\; (0,L)\,, \label{a4}
\end{equation}
where $h_0$ and $\Gamma_0$ are given. Equation (\ref{a1}) for the height function $h$ is a consequence of the conservation of momentum and the kinematic boundary condition, reflecting the model assumption that the velocity of the free interface balances the normal component of the liquid, cf. \cite{EHLW10, JeGr92, JeGr93}. Equation (\ref{a2}) is an advection-transport equation for the surfactant concentration on the interface in which $D$ is a non-dimensional surface diffusion coefficient, assumed to be positive and constant. The positive constant $G$ represents a gravitational force.

Of considerable importance in the modeling is the surface tension $\sigma(\Gamma)$, a decreasing function of the surfactant concentration. Several equations of state giving the dependence of the surface tension $\sigma$ upon the surfactant concentration $\Gamma$, including
$$
\sigma(\Gamma) = \sigma_s - \beta\ \Gamma \;\;\mbox{ or }\;\; \sigma(\Gamma) = \sigma_s - \beta\ \ln{\left( 1 \pm \frac{\Gamma}{\Gamma_\infty}\right)}\,,
$$ 
may be found in the literature, see \cite{CF95,JeGr92,MSL93} and the references therein. In this paper, for technical reasons we assume that 
\begin{equation}
\sigma\in \mathcal{C}^3([0,\infty))\,, \qquad \sigma(0)>0\,, \qquad 0<\sigma_0 \le - \sigma' \le \sigma_\infty\,, \label{a6}
\end{equation}
which is satisfied in particular by the first example above. A straightforward consequence of \eqref{a6} is the fact that $\sigma$ grows at most linearly:
\begin{equation}
|\sigma(r)| \le \sigma(0) + \sigma_\infty\ r\,, \quad r\ge 0\,.\label{a6b}
\end{equation}

Observe that the coupled system (\ref{a1}),\;(\ref{a2}) is degenerate parabolic in the sense that parabolicity is lost if $h$ or $\Gamma$ vanish. While modeling issues related to surfactant spreading on thin liquid films have attracted considerable interest (e.g., see \cite{deWitGallez, JeGr92, JeGr93,Matar02} and the references therein), much less research has been dedicated to analytical aspects. In \cite{Renardy96a, Renardy96b, Renardy97} local existence results are shown. In \cite{GarckeWieland} global existence of weak solutions is derived for a variant of (\ref{a1}),\;(\ref{a2}) without gravity but including a fourth order term in $h$ modeling capillarity effects. Local asymptotic stability of steady states (being simply the positive constants) is investigated in \cite{EHLW10} for the case of soluble surfactant. These results in particular show that, starting with initial values near steady states, problem (\ref{a1})-(\ref{a4}) admits a unique global positive classical solution.

Our aim here is to prove the existence of global nonnegative weak solutions to (\ref{a1})-(\ref{a4}) for arbitrary nonnegative initial values. The core of our analysis is the fact that system (\ref{a1})-(\ref{a3}) possesses an energy functional entailing various {\it a priori} estimates on $(h,\Gamma)$. We regularize (\ref{a1})-(\ref{a4}) appropriately  to obtain a uniformly parabolic system with coefficients (depending nonlinearly on $(h,\Gamma)$) being regular enough to apply abstract semi-group theory to prove well-posedness of the regularized system. This approach warrants that the thereby constructed nonnegative solutions exist globally provided they are {\it a priori} bounded in $W^1_2$. The aforementioned energy estimates provide such bounds and inherit also compactness properties in suitable function spaces to the family of regularized solutions, which allows us to extract a subsequence converging to a weak solution. \\

In fact, we shall establish the following result:

\begin{theorem}\label{MR} Let $D,\ G>0$ and suppose \eqref{a6}. Given nonnegative $h_0, \Gamma_0\in W_2^2(0,L)$ satisfying $\partial_x h_0(x)=\partial_x \Gamma_0(x)$ at $x=0$ and $x=L$, there exists a global weak solution to \eqref{a1}-\eqref{a4}, i.e. a pair of nonnegative functions $(h,\Gamma)$ such that $h(0)=h_0$, $\Gamma(0)=\Gamma_0$,
\begin{eqnarray*}
& & h\in L_\infty(0,T;L_2(0,L))\cap L_5(0,T;\mathcal{C}^{1/5}([0,L]))\,, \quad h^{5/2}\in L_2(0,T;W_2^1(0,L)\,, \\
& & \Gamma \in L_\infty(0,T;L_1(0,L))\cap L_2(0,T;\mathcal{C}([0,L]))\,, \quad \partial_x \sigma(\Gamma) \in L_1((0,T)\times (0,L))\,, \\
& & j_f := \left( - \frac{2}{5}\sqrt{\frac{G}{3}}\ \partial_x \left( h^{5/2} \right) + \sqrt{\frac{3h}{4G}}\ \partial_x \sigma(\Gamma) \right) \in L_2((0,T)\times (0,L))\,, \\
& & j_s := - \frac{G}{5}\ \partial_x \left( h^{5/2} \right) + \sqrt{h}\ \partial_x \sigma(\Gamma) \in L_2((0,T)\times (0,L))\,,
\end{eqnarray*}
for all $T>0$, and 
\begin{eqnarray*}
\frac{d}{dt} \int_0^L h\ \psi\ dx & = & \sqrt{\frac{G}{3}}\int_0^L \partial_x \psi\ h^{3/2}\ j_f\ dx\,, \\
\frac{d}{dt} \int_0^L \Gamma\ \psi\ dx & = & \int_0^L \partial_x\psi\ \left( -D\ \partial_x\Gamma + \sqrt{h}\ \Gamma\ j_s \right)\ dx \,,
\end{eqnarray*}
for all $\psi\in W_\infty^1(0,L)$. In addition, introducing the function $\phi$ defined by $\phi''(r)=-\sigma'(r)/r$ for $r>0$ and $\phi(1)=\phi'(1)=0$, the weak solution $(h,\Gamma)$ satisfies 
\begin{eqnarray}
& & \|h(t)\|_1 = \|h_0\|_1\,, \qquad \|\Gamma(t)\|_1 = \|\Gamma_0\|_1\,, \qquad t\ge 0\,, \label{consmatter} \\
& & \mathcal{L}_0(t) + \int_0^t \mathcal{D}_0(s)\ ds \le \mathcal{L}_0(0)\,, \qquad t\ge 0\,, \label{energineq}
\end{eqnarray}
with
$$
\mathcal{L}_0(t) := \int_0^L \left[ \frac{G}{2}\ |h(t,x)|^2 + \phi(\Gamma(t,x)) \right]\ dx\,, 
$$
and
\begin{eqnarray*}
2\mathcal{D}_0(t) & := & G\ \|j_f(t)\|_2^2 + \|j_s(t)\|_2^2 + \frac{G^2}{75}\ \left\|\partial_x (h^{5/2})(t) \right\|_2^2 \\
& & \ + \frac{1}{4}\ \left\| \sqrt{h(t)}\ \partial_x\sigma(\Gamma)(t) \right\|_2^2 + 8\sigma_0 D\ \left\| \partial_x \sqrt{\Gamma(t)} \right\|_2^2 \,.
\end{eqnarray*}

\end{theorem}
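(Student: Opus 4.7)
The plan is to construct $(h,\Gamma)$ as a limit of smooth solutions to a non-degenerate approximating system. First I would regularize \eqref{a1}--\eqref{a4} by replacing the mobilities $h^3/3$, $h^2/2$, $Gh^2\Gamma/2$, and $h\Gamma\sigma'(\Gamma)$ by smooth truncations bounded below by a small parameter $\e>0$, and by approximating the initial data by strictly positive smooth profiles. The regularized system becomes uniformly parabolic with smooth coefficients, so the abstract quasilinear semi-group framework (already invoked in \cite{EHLW10}) yields a unique nonnegative classical solution $(h_\e,\Gamma_\e)$ on a maximal interval $[0,T_\e)$; strict positivity of $\Gamma_\e$ follows from the strong maximum principle. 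The regularization must be designed to preserve both the conservative divergence structure \emph{and} the energy identity described next.

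The central a priori estimate is obtained by testing the approximate equation for $h_\e$ with $G h_\e$ and that for $\Gamma_\e$ with $\phi'(\Gamma_\e)$. The defining relation $\phi''(r)\,r = -\sigma'(r)$ is engineered precisely so that the Marangoni cross-term in \eqref{a1} cancels, up to sign, the advective cross-term in \eqref{a2} after integration by parts, leaving a single quadratic form in $h^{3/2}\partial_x h$ and $\partial_x\sigma(\Gamma)$. Completing the square in two distinct ways --- isolating first $j_f$ and then $j_s$ --- and averaging the two resulting dissipation identities produces exactly $\mathcal{D}_0$, after using $|\sigma'|\ge \sigma_0$ together with $(\partial_x\Gamma)^2/\Gamma = 4(\partial_x\sqrt{\Gamma})^2$. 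Mass conservation follows by testing with $\psi\equiv 1$; combined with the coercivity of $\phi$ coming from \eqref{a6}, this delivers \eqref{energineq} at the regularized level and in particular a uniform $W^1_2$-bound, hence $T_\e=+\infty$.

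Passing to the limit $\e\to 0$ is then based on these uniform estimates. The dissipation bound controls $h_\e^{5/2}$ and $\sqrt{\Gamma_\e}$ in $L_2(0,T;W^1_2(0,L))$; by the Sobolev embedding $W^1_2\hookrightarrow \mathcal{C}^{1/2}([0,L])$ and a Gagliardo--Nirenberg interpolation against the conserved $L_1$-norms, this yields the regularity classes $h_\e\in L_5(0,T;\mathcal{C}^{1/5})$ and $\Gamma_\e\in L_2(0,T;\mathcal{C})$ claimed in the theorem. The weak formulations control $\partial_t h_\e$ and $\partial_t\Gamma_\e$ in a negative-order Sobolev space, so an Aubin--Lions argument extracts a subsequence along which $h_\e\to h$ and $\Gamma_\e\to\Gamma$ strongly in $L_q(0,T;\mathcal{C}([0,L]))$ for suitable $q$, while $\partial_x(h_\e^{5/2})$, $\partial_x\sqrt{\Gamma_\e}$, $j_{f,\e}$, $j_{s,\e}$ converge weakly in $L_2((0,T)\times(0,L))$. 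Weak lower semicontinuity then transfers \eqref{energineq} to the limit.

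The main obstacle is to identify the weak $L_2$-limits of $j_{f,\e}$ and $j_{s,\e}$ with the nonlinear expressions stated in the theorem, since these involve the product $\sqrt{h_\e}\,\partial_x\sigma(\Gamma_\e)$ in which the first factor is only Hölder continuous and the second only $L_1$ a priori; the $L_2$-nature of the product is apparent only through $\mathcal{D}_0$. Strong uniform convergence of $\Gamma_\e$ together with the continuity of $\sigma$ identifies $\sigma(\Gamma_\e)\to \sigma(\Gamma)$ in $L_p(0,T;\mathcal{C})$, whence $\partial_x\sigma(\Gamma_\e)\to \partial_x\sigma(\Gamma)$ as distributions; combined with the strong convergence of $\sqrt{h_\e}$ and the uniform $L_2$-bound on $\sqrt{h_\e}\,\partial_x\sigma(\Gamma_\e)$, a Vitali-type argument identifies the weak limit of this product as $\sqrt{h}\,\partial_x\sigma(\Gamma)$. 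Once $j_f$ and $j_s$ are correctly identified, passage to the limit in the weak equations is routine, and attainment of the initial data follows from the time-derivative bound.
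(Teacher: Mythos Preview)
Your overall strategy---regularize, derive the energy identity, extract compactness, identify limits---matches the paper, but two steps in your outline contain genuine gaps.

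\textbf{Global existence of the approximants.} You write that the energy inequality ``delivers \ldots\ in particular a uniform $W^1_2$-bound, hence $T_\e=+\infty$''. This is not correct. The functional $\mathcal{L}_0$ controls only $\|h_\e(t)\|_2$ and $\int\phi(\Gamma_\e(t))\,dx$ pointwise in time, while the dissipation $\int_0^t\mathcal{D}_0$ controls $\partial_x(h_\e^{5/2})$ and $\partial_x\sqrt{\Gamma_\e}$ only in $L_2(0,T;L_2)$, not in $L_\infty(0,T;L_2)$. Neither yields a bound on $\|h_\e(t)\|_{W^1_2}+\|\Gamma_\e(t)\|_{W^1_2}$ for each $t$, which is what the continuation criterion for the quasilinear parabolic theory requires. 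The paper closes this gap with a separate, $\e$-dependent layer of estimates (an $L_\infty$ bound on $h_\e$, an $L_2$ bound on $\partial_x\mathcal{A}_1(h_\e)$, then an $L_\infty$ bound on $\Gamma_\e$ via a comparison argument, and finally an $L_2$ bound on $\partial_x\Gamma_\e$); these make essential use of the positive lower bounds $h_\e\ge\sqrt{\e}$, $\Gamma_\e\ge\e$ and of the mollified structure of the regularization, and they are not consequences of the energy inequality.

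\textbf{Regularization and identification of the fluxes.} Your proposed regularization---truncating the mobilities from below---is simpler than what the paper does, but the paper explicitly mollifies the off-diagonal coupling terms via the resolvent $\mathcal{N}_\e=(I-\e^2\partial_x^2)^{-1}$ because the cross terms in \eqref{a1}--\eqref{a2} are of the same order as the diagonal ones; without this, the system has a full diffusion matrix and it is not clear that the abstract quasilinear theory applies, nor that the energy structure survives in a usable form. Relatedly, your identification of the weak limit of $\sqrt{h_\e}\,\partial_x\sigma(\Gamma_\e)$ is too quick: distributional convergence of $\partial_x\sigma(\Gamma_\e)$ together with strong convergence of $\sqrt{h_\e}$ does not by itself identify the weak $L_2$ limit of the product (integrating by parts would require control of $\partial_x\sqrt{h_\e}$ uniformly, which fails as $\e\to 0$ wherever $h$ vanishes). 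The paper instead establishes weak sequential compactness of $\partial_x\sigma(\Gamma_\e)$ in $L_1(Q_T)$---a nontrivial uniform-integrability argument relying on the mollified quantity $B_\e=\mathcal{N}_\e(\Gamma_\e)$ and a refined de~la~Vall\'ee--Poussin theorem---and then uses the elementary fact that a weakly $L_1$-convergent sequence multiplied by a bounded a.e.-convergent sequence converges weakly in $L_1$. Your Vitali-type sketch does not supply this ingredient.
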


Observe that the notion of weak solutions is readily obtained by testing (\ref{a1})--(\ref{a4}) against $\psi\in W^1_\infty(0,L)$ and integrating with respect to the spatial variable. Also, the weak formulation ensures the time continuity of $h$ and $\Gamma$ (for some suitable weak topology with respect to space), so that the initial data $(h,\Gamma)(0)=(h_0,\Gamma_0)$ are meaningful. Observe finally that, thanks to the ``energy inequality'' \eqref{energineq}, we actually have an improved regularity on $\Gamma$, namely $\sqrt{\Gamma}\in L_2(0,T;W_2^1(0,L))$.

\begin{remark}
Theorem~\ref{MR} is actually valid under the weaker assumption $h_0, \Gamma_0\in W_2^1(0,L)$, the proof being similar to the one given below using an additional (but classical) regularization of the initial data.
\end{remark}

We close the introduction by outlining the content of our paper. Section 2 is devoted to a regularized version of the original system (\ref{a1})--(\ref{a4}). Roughly speaking, the coupling terms in \eqref{a1}-\eqref{a2} being of the same order as the diagonal terms (so that we are dealing with somehow a full diffusion matrix), we have to mollify them in order to be able to apply the abstract theory  developed in \cite{AmannTeubner93} for quasilinear parabolic systems. The regularization is then crucial to establish global existence. First, we derive the local well-posedness and {\it a priori} estimates then ensure the global well-posedness. Various compactness properties of the family of regularized solutions are established in Section 3, allowing us to recover a weak solution in the sense of Theorem~\ref{MR}. In the Appendix we collect some tools used for the compactness arguments in Section~3.

\section{A regularized problem}\label{sec:arp}

For $\e\in (0,1)$ and $f\in L_2(0,L)$, let $\mathcal{N}_\e(f)$ be the unique solution in $W_2^2(0,L)$ to the elliptic boundary-value problem
\begin{equation}
\mathcal{N}_\e(f) - \e^2\ \partial_x^2 \mathcal{N}_\e(f) = f \;\;\mbox{ in }\;\; (0,L)\,, \quad \partial_x \mathcal{N}_\e(f)(0)=\partial_x\mathcal{N}_\e(f)(L)=0\,. \label{b0}
\end{equation}
Clearly, $$\mathcal{N}_\e\in\ml(L_2(0,L),W_2^2(0,L))\cap \ml(\mc^\gamma([0,L]),\mc^{\gamma+2}([0,L]))\ ,\quad\gamma>0\ ,$$ is a positive operator.
We define the following functions
\begin{equation}
a_1(r) := G\ \frac{r^3}{3}\,, \quad a_{2,\e}(r) := \frac{(r-\sqrt{\e})^2}{2}\,,\quad r \ge 0\,, \label{b5}
\end{equation}
\begin{equation}
b_{2,\e}(r) := r-\e\,, \quad r\ge 0\,, \label{b6}
\end{equation}
and notice that
\begin{equation}
G\ a_{2,\e}^2(r) \le \eta\ r\ a_1(r)\,, \quad  r\ge \sqrt{\e}\,, \;\;\mbox{ with }\;\; \eta := \frac{3}{4}\,.\label{b14}
\end{equation}
We next fix $\eta_1\in (\eta,1)$ and define
\begin{equation}
\alpha_0(r,s) := \eta_1\ s + (1-\eta_1)\ r\,, \quad \alpha_1(r) := \int_0^r \sqrt{a_1(\rho)}\ d\rho\,, \quad r\ge 0\,, \ s\ge 0\,, \label{b7}
\end{equation}
and
\begin{equation}
\beta_1(r) := \int_0^r \rho\ |\sigma'(\rho)|\ d\rho\,, \quad r\ge 0\,.\label{b8}
\end{equation}

The regularized problem reads
\begin{eqnarray}
\partial_t h_\e & = & \partial_x \left( a_1(h_\e)\ \partial_x h_\e - \frac{a_{2,\e}(h_\e) \sqrt{\mn_\e(h_\e)}}{\sqrt{h_\e}}\ \partial_x \Sigma_\e(h_\e,\Gamma_\e) \right) \;\;\mbox{ in }\;\; Q_\infty\,, \label{b1aa} \\
\partial_t \Gamma_\e & = & \partial_x \left( G\ \frac{a_{2,\e}(h_\e) b_{2,\e}(\Gamma_\e) \sqrt{\alpha_0(h_\e,\mn_\e(h_\e))}}{\sqrt{h_\e a_1(h_\e)}}\ \partial_x \mn_\e(\alpha_1(h_\e)) \right) \label{b2aa}\\
& & + \partial_x \left( \left( D\ \frac{\beta_1'(\Gamma_\e)}{\beta_1'(\mn_\e(\Gamma_\e))} - \alpha_0(h_\e,\mn_\e(h_\e))\ \Gamma_\e\ \sigma'(\Gamma_\e) \right)\ \partial_x \Gamma_\e \right) \;\;\mbox{ in }\;\; Q_\infty\,, \nonumber \\
\partial_x h_\e & = & \partial_x \Gamma_\e = 0\;\;\mbox{ on }\;\; (0,\infty)\times \{0,L\}\,, \label{b3aa} \\
(h_\e,\Gamma_\e)(0) & = & (h_{0,\e},\Gamma_{0,\e}) \;\;\mbox{ in }\;\; (0,L)\,, \label{b4aa}
\end{eqnarray}
where $\Sigma_\e:=\Sigma_\e(h_\e,\Gamma_\e)$ solves
\begin{equation}
\Sigma_\e - \e^2\ \partial_x \left(\mn_\e(h_\e)\ \partial_x \Sigma_\e \right) = \sigma(\Gamma_\e) \;\;\mbox{ in }\;\; Q_\infty\,, \quad \partial_x \Sigma_\e = 0\;\;\mbox{ on }\;\; (0,\infty)\times \{0,L\}\, . \label{b10}
\end{equation}
This problem admits a unique global strong solution:

\begin{theorem}\label{T1}
Let $h_0 , \Gamma_0\in W_2^2(0,L)$ be nonnegative functions satisfying \mbox{$\partial_x h_0(x)=\partial_x\Gamma_0(x)=0$} at $x=0,L$. For $\e\in (0,1)$ set
\begin{equation}
h_{0,\e} := h_0+\sqrt{\e}\,, \quad \Gamma_{0,\e} := \Gamma_0+\e\, . \label{b13}
\end{equation} 
Then there is a unique global nonnegative solution $(h_\e,\Gamma_\e)$ with
$$
h_\e, \Gamma_\e\in\mc^1\big((0,\infty),L_2(0,L)\big)\cap \mc\big((0,\infty),W_2^2(0,L)\big)\cap \mc \big([0,\infty),W_2^1(0,L)\big)
$$
to the regularized problem \eqref{b1aa}-\eqref{b4aa}. Moreover,
$$
h_\e(t,x)\ge \sqrt{\e}\ ,\quad \Gamma_\e(t,x)\ge \e\ ,\qquad (t,x)\in[0,\infty)\times (0,L)\ .
$$
\end{theorem}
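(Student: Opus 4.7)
The plan is to apply the abstract theory of quasilinear parabolic systems from \cite{AmannTeubner93}. The structural point that makes this feasible is that, because of the elliptic smoothing operator $\mn_\e$ and the variable-coefficient smoothing \eqref{b10} defining $\Sigma_\e$, the cross-diffusion terms in \eqref{b1aa}-\eqref{b2aa} depend on $(h_\e,\Gamma_\e)$ only through regularized derivatives and may therefore be treated as lower-order perturbations. The principal part of the system is then essentially diagonal, with coefficients $a_1(h_\e)=Gh_\e^3/3$ and $D\beta_1'(\Gamma_\e)/\beta_1'(\mn_\e(\Gamma_\e))-\alpha_0(h_\e,\mn_\e(h_\e))\Gamma_\e\sigma'(\Gamma_\e)$; in view of \eqref{a6} the latter is strictly positive and the former is bounded below by $G\e^{3/2}/3$ once $h_\e\ge\sqrt{\e}$, so the system is uniformly parabolic on the region of phase space where the solution will eventually be shown to live.

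First I would recast \eqref{b1aa}-\eqref{b4aa} as an abstract quasilinear Cauchy problem on an open subset of $W_2^1(0,L;\RR^2)$ containing $(h_{0,\e},\Gamma_{0,\e})$, verify the requisite continuity and local Lipschitz properties of the diffusion matrix and subordinate nonlinearities in the H\"older topologies required by \cite{AmannTeubner93} (using $H^2$-elliptic regularity for $\mn_\e$ and for \eqref{b10}), and thereby obtain a unique maximal strong solution with the stated regularity on some interval $[0,T_\e)$.

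Next I would establish the pointwise lower bounds. The choice $a_{2,\e}(r)=(r-\sqrt{\e})^2/2$ ensures that the cross-diffusion term in \eqref{b1aa} vanishes on $\{h_\e=\sqrt{\e}\}$, so testing the equation satisfied by $(\sqrt{\e}-h_\e)_+$ against itself, integrating by parts, and using $h_{0,\e}\ge\sqrt{\e}$ from \eqref{b13}, yields $h_\e\ge\sqrt{\e}$ on $[0,T_\e)$. An analogous argument based on the vanishing of $b_{2,\e}$ on $\{\Gamma_\e=\e\}$, combined with the strict positivity of the $\Gamma$-diffusion coefficient, produces $\Gamma_\e\ge\e$. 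Finally, to upgrade the local solution to a global one, I would invoke the continuation criterion of \cite{AmannTeubner93}: it suffices to derive a uniform $W_2^1$-bound on $(h_\e,\Gamma_\e)$ on every finite interval, the lower bounds above ensuring that the solution stays bounded away from the degenerate set. For this I would run an energy estimate mimicking the one producing \eqref{energineq}: conservation of mass (obtained by testing the weak formulation with $\psi\equiv 1$), together with the dissipation inequality obtained by testing \eqref{b1aa} with $Gh_\e$ and \eqref{b2aa} with $\phi'(\Gamma_\e)$ and adding, yields $L_\infty$-in-time control on $\|h_\e\|_2^2+\int_0^L\phi(\Gamma_\e)\,dx$ and $L_2$-in-time control on the gradients of $h_\e$ and $\sqrt{\Gamma_\e}$, which combined with Sobolev embedding in one dimension is sufficient.

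The main obstacle is the technical verification in the first step that the mollified cross-diffusion terms are sufficiently smooth as nonlinear maps on $W_2^1(0,L;\RR^2)$ for Amann's framework; this is precisely the role of $\mn_\e$ being the $H^2$-resolvent \eqref{b0} rather than a mere $L_2$-mollifier, and of the delicate choice of $\Sigma_\e$ via \eqref{b10}. The remaining steps then proceed by fairly standard parabolic techniques, although keeping track of the various $\e$-dependent prefactors and ensuring they do not interfere with the energy dissipation structure requires some care.
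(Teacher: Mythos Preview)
Your first two steps are essentially what the paper does: recast \eqref{b1aa}-\eqref{b4aa} as a quasilinear problem with diagonal principal part and regularized cross-diffusion treated as a lower-order term, apply \cite{AmannTeubner93} to get a maximal strong solution together with the continuation criterion \eqref{globex}, and establish the lower bounds $h_\e\ge\sqrt\e$, $\Gamma_\e\ge\e$ from the vanishing of $a_{2,\e}$ and $b_{2,\e}$ (the paper phrases this as a comparison principle, but your test-function argument amounts to the same thing).

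The gap is in your third step. The continuation criterion \eqref{globex} requires an $L_\infty$-in-time bound on $\|h_\e(t)\|_{W_2^1}+\|\Gamma_\e(t)\|_{W_2^1}$, not an $L_2$-in-time one. The Liapunov functional (your energy estimate, the paper's Lemma~\ref{leb103}) indeed gives $L_\infty$-in-time control of $\|h_\e\|_2^2+\int\phi(\Gamma_\e)$ and $L_2$-in-time control of $\|\partial_x\alpha_1(h_\e)\|_2$ and related dissipation terms, but this does \emph{not} yield a pointwise-in-time $W_2^1$ bound; Sobolev embedding in one dimension cannot manufacture the missing time-uniformity. In particular, the energy only controls $\Gamma_\e$ in $L\log L$ at each fixed time, far short of what is needed.

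The paper closes this gap with three further, genuinely $\e$-dependent, estimates that your outline omits: a higher-order energy identity for $h_\e$ obtained by testing \eqref{b1} with $\partial_t\mathcal{A}_1(h_\e)$ (Lemma~\ref{leb104}), which yields $\sup_t\|\partial_x\mathcal{A}_1(h_\e)\|_2$ and, crucially, $\int_0^T\|\partial_x h_\e\|_\infty^2\,ds<\infty$; an $L_\infty$-bound on $\Gamma_\e$ via comparison with an ODE whose right-hand side is controlled by $\|\partial_x h_\e\|_\infty$ (Lemma~\ref{leb105}); and finally an energy estimate on $\partial_x\beta_1(\Gamma_\e)$ (Lemma~\ref{leb106}) giving $\sup_t\|\partial_x\Gamma_\e\|_2$. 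These are not routine consequences of the basic dissipation inequality and constitute the substantive content of the global-existence part of the proof.
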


The remainder of this section is dedicated to the proof of this theorem.

\subsection{Local well-posedness}\label{sec:lwp}

We first focus our attention on the local solvability of the regularized problem. Given $\e\in (0,1)$ fixed we use the notation
$$
 V_{\mb}^\gamma:=W_{2,\mb}^\gamma(0,L)\cap \mc([0,L],D_0)
$$
with $D_0:=(\e^2,\infty)$ and $\gamma>1/2$, where $W_{2,\mb}^\gamma:=W_{2,\mb}^\gamma(0,L)$ coincides with the fractional Sobolev space $W_{2}^\gamma:=W_{2}^\gamma(0,L)$ if $\gamma\le 3/2$ or is the linear subspace thereof consisting of those $u\in W_{2}^\gamma$ satisfying the Neumann boundary conditions $\partial_x u(0)=\partial_x u(L)=0$ if $\gamma>3/2$. Observe that $V_{\mb}^\gamma$ is open in $W_{2,\mb}^\gamma$ and that $h_{0,\e}, \Gamma_{0,\e}\in V_\mb^2$. In the following we use the notation $\mc^{1-}$ to indicate that a function is locally Lipschitz continuous.

The proof of the next result about Nemitskii operators can be found, e.g., in \cite[Sect.15]{AmannAnnali88}:

\begin{lemma}\label{L1}
Given $g\in \mc^2(D_0)$, let $g^\#(u)(x):=g(u(x))$, $x\in (0,L)$, for $u:(0,L)\rightarrow D_0$. Then $g^\#\in \mc^{1-}(V_\mb^\gamma,W_2^\gamma)$ for $\gamma\in (1/2,1)$.
\end{lemma}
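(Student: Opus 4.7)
The plan is to exploit the Slobodeckij characterization of $W_2^\gamma$ for $\gamma\in(0,1)$,
\[
\|w\|_{W_2^\gamma}^2 = \|w\|_{L_2}^2 + \int_0^L\int_0^L \frac{|w(x)-w(y)|^2}{|x-y|^{1+2\gamma}}\,dx\,dy,
\]
and to decompose
\[
g^\#(u) - g^\#(v) = (u-v)\,F(u,v),\qquad F(u,v)(x) := \int_0^1 g'\big(v(x)+t(u(x)-v(x))\big)\,dt.
\]
Before estimating, I would localize: since $\gamma>1/2$, the continuous Sobolev embedding $W_2^\gamma \hookrightarrow \mc([0,L])$ combined with $u_0([0,L])\subset D_0=(\e^2,\infty)$ yields $\delta>0$ and a $W_2^\gamma$-neighborhood $U$ of $u_0$ such that $u([0,L])\subset[\e^2+\delta,M]$ for every $u\in U$, for some finite $M$. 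On this compact subinterval of $D_0$, the $\mc^2$-regularity of $g$ produces a uniform bound $C_g$ on $g$, $g'$, and $g''$.

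Then, for $u,v\in U$, the $L_2$ bound is immediate from $\|F(u,v)\|_\infty\le C_g$, giving $\|g^\#(u)-g^\#(v)\|_{L_2} \le C_g\,\|u-v\|_{L_2}$. For the Slobodeckij seminorm, I would split
\[
\big(g^\#(u)-g^\#(v)\big)(x) - \big(g^\#(u)-g^\#(v)\big)(y) = F(x)\big[(u-v)(x)-(u-v)(y)\big] + (u-v)(y)\big[F(x)-F(y)\big],
\]
and bound the second term using
\[
|F(x)-F(y)| \le C_g\,\big(|u(x)-u(y)|+|v(x)-v(y)|\big),
\]
while $|(u-v)(y)|$ is absorbed through $\|u-v\|_\infty \le C\,\|u-v\|_{W_2^\gamma}$. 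Squaring, integrating against $|x-y|^{-1-2\gamma}$, and reassembling then yields
\[
\|g^\#(u)-g^\#(v)\|_{W_2^\gamma} \le C(u_0)\,\big(1+\|u\|_{W_2^\gamma}+\|v\|_{W_2^\gamma}\big)\,\|u-v\|_{W_2^\gamma},
\]
which is exactly the local Lipschitz property on $U$.

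The main obstacle is the cross-term $(u-v)(y)\,[F(x)-F(y)]$, where the pointwise $L_\infty$-bound on $u-v$ (from Sobolev embedding) and the Lipschitz regularity of $g'$ (from $g\in\mc^2(D_0)$) have to be used in tandem; the constraint $\gamma>1/2$ is indispensable here, since without it neither this pointwise bound nor the openness of $V_\mb^\gamma$ in $W_{2,\mb}^\gamma$ would be available. Apart from this cross-term, the argument reduces to bookkeeping of uniform constants on the compact subinterval to which localization confines the ranges of $u$ and $v$.
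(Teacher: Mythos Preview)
Your argument is correct. The localization step (using $\gamma>1/2$ to confine ranges to a compact subinterval of $D_0$), the integral representation of $g^\#(u)-g^\#(v)$, the splitting of the Slobodeckij increment, and the use of $\|u-v\|_\infty\le C\|u-v\|_{W_2^\gamma}$ to control the cross-term all go through as stated; the only missing triviality is that $g^\#$ actually lands in $W_2^\gamma$, which follows by applying your estimate with $v$ a constant in the compact subinterval.

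The paper, by contrast, gives no proof at all: it simply refers to \cite[Sect.~15]{AmannAnnali88}, where the corresponding Nemitskii-operator result is established in an abstract Besov/Sobolev-space framework. Your route is more elementary and fully self-contained, relying only on the explicit Slobodeckij description of $W_2^\gamma(0,L)$ available for $\gamma\in(0,1)$ in one dimension; the cited reference covers far more general settings (higher dimensions, higher smoothness indices, a range of function spaces) at the cost of heavier machinery. For the modest regularity needed here, your direct computation is entirely adequate and arguably preferable.
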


We shall also use the following continuity result about pointwise multiplication of real-valued functions.

\begin{lemma}\label{L2}
(i) If $\gamma\ge 0$, then pointwise multiplication  $\mc^\gamma([0,L])\times \mc^\gamma([0,L])\rightarrow \mc^\gamma([0,L])$ is continuous.

(ii) If $\gamma>1/2$, then pointwise multiplication  $W_2^\gamma\times W_2^\gamma\rightarrow W_2^\gamma$ is continuous.

(iii) If $s>\gamma\ge 0$, then pointwise multiplication  $\mc^s([0,L])\times W_2^\gamma\rightarrow W_2^\gamma$ is continuous.
\end{lemma}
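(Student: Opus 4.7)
The plan is to prove in each case that pointwise multiplication is \emph{bounded} as a bilinear map, which is equivalent to continuity. The workhorse throughout will be the elementary identity
\[
u(x)v(x) - u(y)v(y) = u(x)\bigl(v(x)-v(y)\bigr) + v(y)\bigl(u(x)-u(y)\bigr),
\]
which separates the two factors so that the smoothness of one can compensate for the mere sup- or $L_2$-bound of the other.

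For part (i) with $\gamma\in[0,1]$, dividing the identity by $|x-y|^\gamma$ and taking suprema yields $[uv]_{\mathcal{C}^\gamma}\le \|u\|_\infty [v]_{\mathcal{C}^\gamma}+\|v\|_\infty [u]_{\mathcal{C}^\gamma}$, and combined with $\|uv\|_\infty\le\|u\|_\infty\|v\|_\infty$ this gives the bilinear estimate. For $\gamma=k+\theta$ with $k\in\mathbb{N}$ and $\theta\in[0,1)$, apply Leibniz $(uv)^{(k)}=\sum_j\binom{k}{j}u^{(j)}v^{(k-j)}$ and use the previous step in $\mathcal{C}^\theta$ on each summand.

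For parts (ii) and (iii) I would use the Slobodeckii characterisation on $\gamma\in(0,1)$,
\[
\|f\|_{W_2^\gamma}^2 \sim \|f\|_2^2 + \int_0^L\!\int_0^L\frac{|f(x)-f(y)|^2}{|x-y|^{1+2\gamma}}\,dx\,dy,
\]
insert the algebraic split into the double integral, and estimate the two resulting terms separately. For (ii) the hypothesis $\gamma>1/2$ provides the embedding $W_2^\gamma\hookrightarrow L_\infty(0,L)$, so one factor can always be pulled out in $L_\infty$ and one obtains $\|uv\|_{W_2^\gamma}\lesssim\|u\|_{W_2^\gamma}\|v\|_{W_2^\gamma}$. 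For (iii) with $\gamma\in(0,1)$, in the term carrying $|v(x)-v(y)|^2$ the factor $u(x)^2$ is dominated by $\|u\|_\infty^2\le\|u\|_{\mathcal{C}^s}^2$; in the other term the H\"older bound $|u(x)-u(y)|^2\le [u]_{\mathcal{C}^s}^2|x-y|^{2s}$ reduces the inner integral to $\int_0^L|x-y|^{2s-1-2\gamma}\,dx$, which is finite exactly because $s>\gamma$, and the outer integral returns $\|v\|_2^2$. The corner cases $\gamma=0$ (trivial $L_\infty\cdot L_2\to L_2$) and $\gamma\ge 1$ (Leibniz, noting that differentiating $u\in\mathcal{C}^s$ preserves the gap $s-\gamma$) complete the picture.

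The main obstacle is really the bookkeeping as $\gamma$ crosses integer thresholds, where the definition of $W_2^\gamma$ switches from Slobodeckii to classical Sobolev. This is handled either by combining the Slobodeckii estimates on each strip $(k,k+1)$ with a direct Leibniz argument at the integer layer, or by invoking real interpolation; no ingredient deeper than $W_2^\gamma\hookrightarrow L_\infty$ for $\gamma>1/2$ and the integrability $\int_0^L|x|^{2(s-\gamma)-1}\,dx<\infty$ for $s>\gamma$ is actually required.
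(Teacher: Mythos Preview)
Your argument is correct. The algebraic split together with the Slobodeckii representation handles the fractional range $\gamma\in(0,1)$ cleanly, the embedding $W_2^\gamma\hookrightarrow L_\infty$ for $\gamma>1/2$ is exactly what makes (ii) work, and the integrability condition $2(s-\gamma)-1>-1$ is the right one for (iii). One small point worth making explicit in (iii): if $s>1$ you cannot write $|u(x)-u(y)|\le[u]_{\mathcal C^s}|x-y|^s$ directly, but replacing $s$ by any $s'\in(\gamma,1]$ (which is permissible since $\mathcal C^s\hookrightarrow\mathcal C^{s'}$) fixes this immediately. Your inductive step via Leibniz also goes through, though note that in (ii) for $\gamma>1$ the cleanest way to close the loop is to feed the terms $u^{(j)}v^{(k-j)}$ back into the already-established case (iii) on the strip $[0,1)$, using $W_2^{\gamma-j}\hookrightarrow\mathcal C^{\gamma-j-1/2}$.

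The paper takes an entirely different route: it simply declares (i) obvious and cites Amann's multiplication theorem for (ii) and Triebel (together with a formula in Amann's Teubner survey) for (iii). So the paper treats the lemma as a known black box from the theory of function-space multiplication, whereas you give a self-contained elementary proof. Your approach has the advantage of transparency and of not relying on external machinery; the paper's has the advantage of brevity and of covering, via the cited theorems, the full Besov/Triebel--Lizorkin scale without any bookkeeping across integer thresholds.
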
 

\begin{proof}
While assertion (i) is obvious, assertion (ii) is a consequence of \cite[Thm.4.1]{AmannMult},  and assertion (iii) is proved in \cite{Triebel} (see also \cite[Eq.(8.3)]{AmannTeubner93}).
\end{proof}

\medskip

The next proposition guarantees a nonnegative maximal solution to the regularized problem \eqref{b1aa}-\eqref{b4aa}.
The crucial point is that, though the local solution which we construct belongs to $W_{2,\mb}^2(0,L)$, an {\it a priori} estimate in $W_2^1$ is sufficient to obtain global existence, see \eqref{globex} below.

\begin{proposition}\label{P3}
The regularized problem \eqref{b1aa}-\eqref{b4aa} admits a unique maximal strong solution $(h_\e,\Gamma_\e)$ on the maximal interval of existence $\mj:=\mj(\e)$. The solution possesses the regularity
$$
h_\e\, , \,\Gamma_\e \in\mc^1\big(\mj\setminus\{0\},L_2(0,L)\big)\cap \mc\big(\mj\setminus\{0\},W_{2,\mb}^2(0,L)\big)\cap\mc\big(\mj,W_2^1(0,L)\big)\ .
$$
Moreover, if for each $T>0$ there is some $c(T,\e)>0$ such that
\bqn\label{globex}
\min\big\{h_\e(t,x)\, , \, \Gamma_\e (t,x)\big\} \ge \e^2+c(T,\e)^{-1}\ ,\qquad \|h_\e(t)\|_{W_2^1}+\|\Gamma_\e(t)\|_{W_2^1}\le c(T,\e)
\eqn
for $t\in \mj\cap [0,T]$ and $x\in (0,L)$, then $\mj=[0,\infty)$, i.e. the solution exists globally.
\end{proposition}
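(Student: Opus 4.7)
The strategy is to recast the regularized problem \eqref{b1aa}--\eqref{b4aa} as an abstract quasilinear parabolic Cauchy problem on the fractional Sobolev space $V_\mb^\gamma$ with $\gamma \in (1/2, 1)$, and to invoke the general theory of Amann for quasilinear parabolic systems \cite{AmannTeubner93}. Writing $u := (h_\e, \Gamma_\e)$ and collecting \eqref{b1aa}--\eqref{b4aa} in the abstract form
\[
\partial_t u = \partial_x\bigl(B_\e(u)\, \partial_x u\bigr) + \partial_x f_\e(u), \qquad u(0) = (h_{0,\e},\Gamma_{0,\e}),
\]
coupled with the elliptic equation \eqref{b10} defining $\Sigma_\e$, the key structural observation motivating the particular form of the regularization is that each cross-coupling flux in \eqref{b1aa}--\eqref{b2aa} carries a derivative only of the smoothed quantities $\Sigma_\e$ or $\mn_\e(\alpha_1(h_\e))$. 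Since $\mn_\e$ and the elliptic resolvent defining $\Sigma_\e$ both gain two derivatives in $u$, these cross terms are genuinely of lower order relative to $\partial_x u$, and the effective principal part $B_\e$ is diagonal with entries $a_1(h_\e)$ and $D\beta_1'(\Gamma_\e)/\beta_1'(\mn_\e(\Gamma_\e)) - \alpha_0(h_\e,\mn_\e(h_\e))\,\Gamma_\e\,\sigma'(\Gamma_\e)$, both strictly positive on $D_0 \times D_0$ by \eqref{a6} and bounded below on each compact subset, which yields the normal ellipticity required by \cite{AmannTeubner93}.

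I would then verify the Lipschitz continuity of the coefficients and remainder terms as maps from $V_\mb^\gamma$ into $W_2^\gamma$. Each entry of $B_\e$ and each component of $f_\e$ is a product of Nemitskii operators associated with $\mc^2$ functions on $D_0$ (namely $a_1$, $a_{2,\e}$, $b_{2,\e}$, $\alpha_1$, $\beta_1'$, $\sigma$, $\sigma'$, and their reciprocals where needed), possibly post-composed with the smoothing operator $\mn_\e \in \ml(W_2^\gamma,\mc^{\gamma+2}([0,L]))$; Lemma~\ref{L1} handles each Nemitskii operator while Lemma~\ref{L2} controls the products. The non-local quantity $\Sigma_\e$ requires more care: reading \eqref{b10} as a uniformly elliptic Neumann problem whose coefficient $\mn_\e(h_\e)$ is bounded below by $\e^2$, a contraction (or implicit-function) argument combined with elliptic regularity exhibits the map $(h_\e,\Gamma_\e) \mapsto \Sigma_\e$ as locally Lipschitz from $V_\mb^\gamma \times V_\mb^\gamma$ into $W_2^{\gamma+2}$, so that $\partial_x \Sigma_\e \in W_2^{\gamma+1} \hookrightarrow \mc([0,L])$ is indeed a lower-order perturbation. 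With these verifications Amann's theorem yields a unique maximal strong solution with the stated regularity on a maximal interval $\mj = [0, T_\e^*)$; the continuity into $W_2^1$ up to $t = 0$ follows from $h_{0,\e}, \Gamma_{0,\e} \in W_2^2$ together with the interpolation and maximal-regularity estimates intrinsic to the Amann framework.

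For the extension criterion, the standard dichotomy of \cite{AmannTeubner93} asserts that if $T_\e^* < \infty$ then, as $t \uparrow T_\e^*$, either $\min\{h_\e(t,\cdot),\Gamma_\e(t,\cdot)\}$ tends to $\e^2$ (so $u(t)$ escapes every compact subset of $D_0 \times D_0$ in the $\mc([0,L])$-topology), or $\|u(t)\|_{V_\mb^\gamma}$ is unbounded. The lower bound in \eqref{globex} rules out the first alternative, while the $W_2^1$ bound together with the continuous embedding $W_2^1 \hookrightarrow V_\mb^\gamma$ rules out the second, so $T_\e^* = \infty$. I expect the main technical obstacle to be the rigorous treatment of $\Sigma_\e$ as a Lipschitz nonlinearity in $V_\mb^\gamma$: the $h_\e$-dependence of the coefficient in \eqref{b10} must be propagated carefully through elliptic regularity, while confirming that $\partial_x\Sigma_\e$ genuinely has lower order than $\partial_x u$. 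Once this point is settled, the remainder of the argument reduces to verifying the standard hypotheses of \cite{AmannTeubner93}.
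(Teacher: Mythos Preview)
Your proposal is correct and follows essentially the same strategy as the paper: recast \eqref{b1aa}--\eqref{b4aa} as a quasilinear system with diagonal principal part (the cross terms being genuinely lower order thanks to the smoothing by $\mn_\e$ and the elliptic problem \eqref{b10}), verify the hypotheses of \cite[Thm.~13.1]{AmannTeubner93} via Lemmas~\ref{L1} and~\ref{L2}, and read off the extension criterion from Amann's dichotomy. The one point you compress is that \cite[Thm.~13.1]{AmannTeubner93} only delivers a \emph{weak} $W_2^{3/2-3\xi}$-solution; the paper then bootstraps the regularity in two further steps---first to a strong $W_2^{2\nu}$-solution via the linear theory \cite[Thm.~11.3]{AmannTeubner93}, then to $W_{2,\mb}^2$ via \cite[Thm.~10.1]{AmannTeubner93}---so the full stated regularity does not fall out of a single application of Amann's theorem as your sketch suggests.
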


\begin{proof}
To establish the result we shall use the theory for quasilinear equations from \cite[Sect.13]{AmannTeubner93}. We simplify the notation by omitting the subscript $\e$ everywhere in \eqref{b1aa}-\eqref{b4aa} for the remainder of the proof. In the following we write $u:=(h,\Gamma)$ and introduce
$$
\mathpzc{a}(u):=\left(\begin{matrix} a_1(h) &0 \\ 0& \displaystyle{D\frac{\beta_1'(\Gamma)}{\beta_1'(\mn(\Gamma))} -\alpha_0(h,\mn(h))\Gamma \sigma'(\Gamma)}\end{matrix}\right)
$$
and
$$
F(u):=\partial_x\left(\mathpzc{b}(u)\partial_x\left(\begin{matrix} \Sigma (h,\Gamma)\\ \mn(\alpha_1(h)) \end{matrix}\right)\right)
$$
with 
$$
\mathpzc{b}(u):=\left(\begin{matrix} \displaystyle{-\frac{a_2(h) \sqrt{\mn(h)}}{\sqrt{h}}} &0 \\ 0& \displaystyle{G\frac{a_2(h) b_2(\Gamma) \sqrt{\alpha_0(h,\mn(h))}}{\sqrt{h a_1(h)}}} \end{matrix}\right)\ .
$$
Thus, setting
$$
\mathcal{A}(u)w:=-\partial_x(\mathpzc{a}(u)\partial_x w)\ ,\quad \mb w:=\partial_x w\ ,
$$
we may re-write equations \eqref{b1aa}-\eqref{b4aa} as a quasilinear problem of the form
\begin{align}
\partial_t u +\mathcal{A}(u) u& =  F(u) & \text{in}\ (0,\infty)\times (0,L)\ ,\label{c1aa}\\ 
\mb u& =  0& \text{on}\ (0,\infty)\times \{0,L\}\ ,\label{c2aa}\\
u(0,\cdot)& =  u^0:=(h_0,\Gamma_0) & \text{on}\ (0,L)\ .\label{c3aa} 
\end{align}
We next verify the assumptions of \cite[Thm.13.1]{AmannTeubner93} which then guarantees the existence of a weak solution to this quasilinear problem. Subsequently, we shall improve the regularity of the weak solution. In the following, we let $\xi\in (0,1/8)$ denote a sufficiently small number so that in particular, for $\gamma:=1/2-2\xi>0$,
\bqn\label{A0}
V_\mb^{1-\xi}\hookrightarrow \mc^\gamma :=\mc^\gamma([0,L])\ .
\eqn
Consequently, classical elliptic regularity applied to \eqref{b0} ensures
\bqn\label{A2}
\mn\in \mc^{1-}(V_\mb^{1-\xi},\mc^{\gamma+2})\qquad\text{with}\qquad \mn(f)\ge \e^2 \;\;\mbox{ for all }\;\;  f\in V_\mb^{1-\xi}\ .
\eqn
Lemma~\ref{L2}(i) and \eqref{b5}, \eqref{A0} easily yield
\bqn\label{A1}
\big[h\mapsto a_1(h)\big]\in \mc^{1-}(V_\mb^{1-\xi},\mc^\gamma)\ ,
\eqn
and we obtain from \eqref{a6}, \eqref{b7}, \eqref{A0}, \eqref{A2}, and Lemma~\ref{L2}(i) that
\bqn\label{A3}
\big[(h,\Gamma)\mapsto \alpha_0(h,\mn (h))\Gamma \sigma'(\Gamma)\big]\in \mc^{1-}(V_\mb^{1-\xi}\times V_\mb^{1-\xi},\mc^\gamma)\ ,
\eqn
while \eqref{b8}, \eqref{A0}, and \eqref{A2} entail
\bqn\label{A4}
\left[\Gamma\mapsto \frac{\beta_1'(\Gamma)}{\beta_1'(\mn(\Gamma))}\right]\in \mc^{1-}(V_\mb^{1-\xi},\mc^\gamma)\ .
\eqn
Thus, \eqref{A1}, \eqref{A3}, and \eqref{A4} imply 
\bqn\label{A5}
\big[u=(h,\Gamma)\mapsto \mathpzc{a}(u)\big]\in \mc^{1-}\big(V_\mb^{1-\xi}\times V_\mb^{1-\xi},(\mc^\gamma)^4\big)\ .
\eqn
Note that if $u=(h,\Gamma)$ with $h(x),\Gamma(x)>\e^2$ for $x\in (0,L)$, then the matrix $\mathpzc{a}(u(x))$ has strictly positive eigenvalues due to \eqref{a6}. Therefore, letting $2\hat{\alpha}:=3/2-3\xi$ so that $\gamma>2\hat{\alpha}-1$, and using the notion of \cite[Sect.4 \& 8]{AmannTeubner93} (in particular, see \cite[Eq.(8.6)]{AmannTeubner93}), it follows from \cite[Ex.4.3.e)]{AmannTeubner93} that
\bqn\label{A7}
(\mathcal{A},\mb)\in \mc^{1-}\big(V_\mb^{1-\xi}\times V_\mb^{1-\xi},\mathcal{E}^{\hat{\alpha}}((0,L))\big)\ .
\eqn
That is, $(\mathcal{A}(u),\mb)$ depends Lipschitz continuously on its argument $u\in V_\mb^{1-\xi}\times V_\mb^{1-\xi}$ and for each such $u$ fixed it is normally elliptic with operator $\mathcal{A}(u)$ in divergence form having $\mc^\gamma$-coefficients with $\gamma>2\hat{\alpha}-1$. We next study the regularity properties of the function $F$. Clearly, the function $g$, given by $g(r):=a_2(r)/\sqrt{r}$, $r>\e^2$, belongs to $\mc^2(D_0)$ by \eqref{b5} so that Lemma~\ref{L1} applies to yield
$$
\left[h\mapsto \frac{a_2(h)}{\sqrt{h}}\right]\in\mc^{1-}(V_\mb^{1-\xi},W_2^{1-\xi})\ .
$$
Since \eqref{A0} and \eqref{A2} provide
$$
\left[h\mapsto\sqrt{\mn(h)}\right]\in\mc^{1-}(V_\mb^{1-\xi},\mc^{\gamma+2})\ ,
$$
we obtain from Lemma~\ref{L2}(iii)
\bqn\label{A8}
\left[h\mapsto\frac{a_2(h)\sqrt{\mn(h)}}{\sqrt{h}}\right]\in \mc^{1-}\big(V_\mb^{1-\xi},W_2^{1-\xi}\big)\ .
\eqn 
As above we have by Lemma~\ref{L1}, \eqref{b5}, \eqref{b7}, and \eqref{A2}
\bqnn
\left[h\mapsto\frac{a_2(h)}{\sqrt{ha_1(h)}}\right]\in \mc^{1-}\big(V_\mb^{1-\xi},W_2^{1-\xi}\big)\ ,\qquad \left[h\mapsto\sqrt{\alpha_0(h,\mn(h))}\right]\in \mc^{1-}\big(V_\mb^{1-\xi},W_2^{1-\xi}\big)\ ,
\eqnn
from which we deduce, using \eqref{b6} and Lemma~\ref{L2}(ii),
\bqn\label{A9}
\left[u=(h,\Gamma)\mapsto G\frac{a_2(h) b_2(\Gamma) \sqrt{\alpha_0(h,\mn(h))}}{\sqrt{h a_1(h)}}\right]\in \mc^{1-}\big(V_\mb^{1-\xi}\times V_\mb^{1-\xi},W_2^{1-\xi}\big)\ .
\eqn 
Since $\alpha_1$ in \eqref{b7} is smooth in $D_0$, we get from \eqref{A0} and \eqref{A2}
\bqn\label{A10}
\left[h\mapsto\partial_x\mn(\alpha_1(h))\right]\in \mc^{1-}\big(V_\mb^{1-\xi},\mc^{1+\gamma}\big)\ .
\eqn
The operator $f\mapsto f-\e^2\partial_x(\mn(h)\partial_x f)$ is invertible in $\ml(\mc_\mb^{\gamma+2},\mc^\gamma)$ for $h\in V_\mb^{1-\xi}$ by \eqref{A2} and ellipticity, and it thus follows from \eqref{b10}, \eqref{A2}, the Lipschitz continuity (in fact: analyticity) of the inversion map $\ell\mapsto \ell^{-1}$ for linear operators, and $[\Gamma\mapsto\sigma(\Gamma)]\in\mc^{1-}(V_\mb^{1-\xi},\mc^\gamma)$ that
$$
\big[(h,\Gamma)\mapsto \partial_x\Sigma (h,\Gamma)\big]\in \mc^{1-}(V_\mb^{1-\xi}\times V_\mb^{1-\xi},\mc^{1+\gamma})\ .
$$
Combining this with \eqref{A8}, \eqref{A9}, and \eqref{A10} we derive from Lemma~\ref{L2}(iii) 
\bqn\label{A11}
F\in \mc^{1-}\big(V_\mb^{1-\xi}\times V_\mb^{1-\xi},W_2^{-\xi}\times W_2^{-\xi}\big)\ .
\eqn 
At this point observe that $W_2^{-\xi}=W_{2,\mb}^{-\xi}$ in the notation of \cite{AmannTeubner93} (in particular, see \cite[Eq.(7.5)]{AmannTeubner93}) since $\xi<1/2$. Thus, recalling that $2\hat{\alpha}=3/2-3\xi$ and choosing the numbers $(\tau,r,s,\sigma)$ in \cite[Eq.(13.2)]{AmannTeubner93} to be $(-\xi,1-\xi,1+\xi,2\hat{\alpha})$ we may apply \cite[Thm.13.1]{AmannTeubner93} due to \eqref{A7} and \eqref{A11}. We conclude that the quasilinear problem \eqref{c1aa}-\eqref{c3aa} with $u^0=(h_0,\Gamma_0)\in V_\mb^2\times V_\mb^2$ admits a unique maximal weak $W_2^{3/2- 3\xi}$-solution $(h,\Gamma)$ in the sense of \cite[Sect.13]{AmannTeubner93} on some interval $\mj$; that is,
$$
u=(h,\Gamma)\in \mc\big(\mj\setminus\{0\},W_{2,\mb}^{3/2-3\xi}\times W_{2,\mb}^{3/2-3\xi}\big) \cap \mc^1\big(\mj\setminus\{0\},W_{2,\mb}^{-1/2-3\xi}\times W_{2,\mb}^{-1/2-3\xi}\big)\ .
$$
The solution $u=(h,\Gamma)$ exists globally, i.e. $\mj=[0,\infty)$, provided that $(h,\Gamma)\vert_{[0,T]}$ is bounded in $W_2^1\times W_2^1$ and bounded away from the boundary of $V_\mb^{1-\xi}$ for each $T>0$. In particular, the solution exists globally provided \eqref{globex} holds.

We now aim at improving the regularity of $u=(h,\Gamma)$ as in \cite[Sect.14]{AmannTeubner93}. Given $\delta>0$ and $\xi>0$ still sufficiently small, set $\mj_\delta:=\mj\cap [\delta,\infty)$. Then
$$
h\, ,\, \Gamma\in \mc\big( \mj_\delta,W_{2,\mb}^{3/2-3\xi}\big) \cap \mc^1\big( \mj_\delta,W_{2,\mb}^{-1/2-3\xi}\big)\ ,
$$
from which we derive 
\bqn\label{A20}
h\, ,\, \Gamma \in \mc^{\rho}( \mj_\delta, W_{2,\mb}^{3/2-3\xi-2\rho})\ ,\quad 0\le 2\rho\le 2\ ,
\eqn
by \cite[Thm.7.2]{AmannTeubner93}. Taking $\rho:=\xi$ and setting $\mu:=1-6\xi$, we have
$W_{2,\mb}^{3/2-3\xi-2\rho}\hookrightarrow \mc^\mu$, and it thus follows from \eqref{A20} analogously to \eqref{A5} that
$$
\big[t\mapsto \mathpzc{a}(u(t))\big]\in \mc^\rho( \mj_\delta,(\mc^\mu)^4)\ .
$$
Hence, if we put $2\hat{\mu}:=2-8\xi$ so that $\mu>2\hat{\mu}-1$, we obtain similarly to \eqref{A7} from \cite[Ex.4.3.e), Eq.(8.6)]{AmannTeubner93} that
\bqn\label{A21}
(\mathcal{A}(u),\mb)\in\mc^\rho \big( \mj_\delta,\mathcal{E}^{\hat{\mu}}((0,L))\big)\ .
\eqn
Set then $2\nu:=3/2+\xi$ and note that, for $\xi>0$ small enough,
\bqn\label{A22}
3/2 < 2\nu < 2\hat{\mu} < 2\qquad\text{and}\qquad 2\rho=2\xi> \xi=2\nu-3/2\ .
\eqn
Also observe that \eqref{A11} and \cite[Eq.(7.5)]{AmannTeubner93} ensure
\bqn\label{A23}
F(u)\in\mc^\rho ( \mj_\delta, W_{2,\mb}^{-\xi}\times W_{2,\mb}^{-\xi})\hookrightarrow \mc^\rho ( \mj_\delta, W_{2,\mb}^{2\nu-2}\times W_{2,\mb}^{2\nu-2})\ .
\eqn
Gathering \eqref{A21}-\eqref{A23} and invoking \cite[Thm.11.3]{AmannTeubner93}, we conclude that the linear problem
\begin{align}
\partial_t v +\mathcal{A}(u(t)) v& =  F(u(t)) & \text{in}\ ( \mj_\delta\setminus\{\delta\})\times (0,L)\ ,\label{c1aaa}\\ 
\mb v& =  0& \text{on}\ ( \mj_\delta\setminus\{\delta\})\times \{0,L\}\ ,\label{c2aaa}\\
v(0,\cdot)& =  u( \delta,\cdot) & \text{on}\ (0,L)\ ,\label{c3aaa} 
\end{align}
has a unique strong $W_2^{2\nu}$-solution (in the sense of \cite[Sect.11]{AmannTeubner93})
$$
v\in\mc( \mj_\delta\setminus\{\delta\},W_{2,\mb}^{2\nu}\times W_{2,\mb}^{2\nu})\cap \mc^1( \mj_\delta\setminus\{\delta\},W_{2,\mb}^{2\nu-2} \times W_{2,\mb}^{2\nu-2})\ .
$$
Hence, $v$ and $u$ are both weak $W_2^{3/2-3\xi}$-solutions to \eqref{c1aaa}-\eqref{c3aaa} and thus $u=v$ by uniqueness of weak solutions to linear problems. Making $ \delta>0$ smaller we may replace $ \mj_\delta\setminus\{\delta\}$ by $ \mj_\delta$, and using the embedding $W_2^{3/2+\xi}\hookrightarrow \mc^{1+ \xi}$ for $\xi$ sufficiently small, we get
$
h,\Gamma\in\mc^\xi( \mj_\delta,\mc^{ 1+\xi})
$.
But then $u=(h,\Gamma)$ satisfies
$$
\partial_t u-\partial_x\big(\mathpzc{a}(u)\partial_x u\big)=F(u) \quad\text{on}\quad  \mj_\delta\times (0,L)
$$
subject to the boundary condition $\mb u=0$ with $\mathpzc{a}(u)\in \mc^{ \xi}( \mj_\delta,\mc^1)$ and $F(u)\in \mc^\xi( \mj_\delta, L_2)$ from which we readily conclude that
$$
h\, ,\, \Gamma\in\mc( \mj_\delta, W_{2,\mb}^2)\cap \mc^1( \mj_\delta, L_2)
$$
with $ \delta>0$ arbitrarily small by invoking \cite[Thm.10.1]{AmannTeubner93} with $(E_0,E_1):=(L_2,W_{2,\mb}^2)$. This proves the proposition.
\end{proof}

\subsection{Global well-posedness}\label{sec:gwp}

Let $(h_\e,\Gamma_\e)$ denote the unique strong solution to \eqref{b1aa}-\eqref{b4aa} on the maximal interval of existence $\mj=\mj(\e)$ provided by Proposition~\ref{P3}. We now show that \eqref{globex} holds which implies $\mj=[0,\infty)$. Introducing the abbreviations
\begin{equation}
H_\e := \mathcal{N}_\e(h_\e)\,, \quad A_\e := \mathcal{N}_\e(\alpha_1(h_\e))\,, \quad B_\e := \mathcal{N}_\e(\Gamma_\e)\,, \quad \Sigma_\e:=\Sigma_\e(h_\e,\Gamma_\e)\,, \label{b9} 
\end{equation}
and subsequently omitting the subscript $\e$ everywhere in \eqref{b1aa}-\eqref{b4aa} to simplify notation, the strong solution $(h,\Gamma)=(h_\e,\Gamma_\e)$ thus satisfies
\begin{eqnarray}
\partial_t h & = & \partial_x \left( a_1(h)\ \partial_x h - \frac{a_{2}(h) \sqrt{H}}{\sqrt{h}}\ \partial_x \Sigma \right) \;\;\mbox{ in }\;\;  \mj\setminus\{0\} \times (0,L)\,, \label{b1} \\
\partial_t \Gamma & = & \partial_x \left( G\ \frac{a_{2}(h) b_{2}(\Gamma) \sqrt{\alpha_0(h,H)}}{\sqrt{h a_1(h)}}\ \partial_x A \right) \label{b2}\\
& & + \partial_x \left( \left( D\ \frac{\beta_1'(\Gamma)}{\beta_1'(B)} - \alpha_0(h,H)\ \Gamma\ \sigma'(\Gamma) \right)\ \partial_x \Gamma \right) \;\;\mbox{ in }\;\;  \mj\setminus\{0\} \times (0,L)\,, \nonumber \\
\partial_x h & = & \partial_x \Gamma = 0\;\;\mbox{ on }\;\;  \mj\setminus\{0\} \times \{0,L\}\,, \label{b3} \\
(h,\Gamma)(0) & = & (h_{0},\Gamma_{0}) \;\;\mbox{ in }\;\; (0,L)\,. \label{b4}
\end{eqnarray}

We begin with some obvious consequences of the structure of \eqref{b1}-\eqref{b4}. 

\begin{lemma}\label{leb101}
For $(t,x)\in\mathcal{J}\times (0,L)$, we have
\begin{eqnarray}
& & h(t,x) \ge \sqrt{\e}\,, \quad \Gamma(t,x) \ge \e\,, \label{b101} \\
& & \|h(t)\|_1 = \|h_{0}\|_1\,, \quad \|\Gamma(t)\|_1=\|\Gamma_{0}\|_1\,. \label{b102}
\end{eqnarray}
\end{lemma}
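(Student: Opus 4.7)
The conservation of mass \eqref{b102} is a direct consequence of the divergence form of \eqref{b1}--\eqref{b2}. The $h$-flux vanishes at $x=0,L$ because of $\partial_x h=0$ (from \eqref{b3}) together with $\partial_x \Sigma=0$ (built into \eqref{b10}); similarly the $\Gamma$-flux vanishes there since $\partial_x \Gamma=0$ combines with $\partial_x A=\partial_x\mn_\e(\alpha_1(h))=0$ at $x=0,L$, the latter being inherent in the definition \eqref{b0} of $\mn_\e$. Integrating over $(0,L)$ thus gives $\frac{d}{dt}\|h(t)\|_1=\frac{d}{dt}\|\Gamma(t)\|_1=0$, and invoking \eqref{b13} yields \eqref{b102}.

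The crucial observation for \eqref{b101} is that the regularizations $a_{2,\e}$ and $b_{2,\e}$ from \eqref{b5}--\eqref{b6} have been tailored so that $a_{2,\e}(\sqrt{\e})=a_{2,\e}'(\sqrt{\e})=0$ and $b_{2,\e}(\e)=0$; consequently the cross-diffusion coefficients in \eqref{b1}--\eqref{b2} degenerate precisely at the threshold values we want to enforce. Focusing first on $h$, I would set $w:=h-\sqrt{\e}$, multiply \eqref{b1} by $-w^-$ and integrate over $(0,L)$. A single integration by parts (with no boundary contribution, since the full flux of \eqref{b1} is zero on $\{0,L\}$) combined with $a_2(h)=(w^-)^2/2$ on $\{w<0\}$ produces
$$\frac{1}{2}\frac{d}{dt}\int_0^L (w^-)^2\, dx = -\int_{\{w<0\}} a_1(h)\,|\partial_x h|^2\, dx + \int_{\{w<0\}} \frac{(w^-)^2\sqrt{H}}{2\sqrt{h}}\, \partial_x h\, \partial_x \Sigma\, dx.$$
Young's inequality applied to the cross-term absorbs half of the diffusion contribution, leaving a remainder controlled by $\int_{\{w<0\}} (w^-)^4\, H\,|\partial_x\Sigma|^2/(h\,a_1(h))\, dx$. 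Since $h\ge 0$ forces $w^-\le \sqrt{\e}$, one has $(w^-)^4\le \e\,(w^-)^2$; combining this with local-in-time $L^\infty$-bounds on $H$, $\partial_x\Sigma$ and $h^{-1}$, all available on a short initial interval by the continuity provided by Proposition~\ref{P3}, closes the estimate into $\frac{d}{dt}\|w^-\|_2^2\le C\|w^-\|_2^2$. Since $w^-(0)\equiv 0$ by \eqref{b13}, Gr\"onwall forces $w^-\equiv 0$, that is $h\ge \sqrt{\e}$ on this short interval. A standard continuation argument then propagates the bound to all of $\mj$, because $h\ge \sqrt{\e}$ is self-sustaining once established. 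The proof of $\Gamma\ge \e$ runs in parallel with $z:=\Gamma-\e$ and the factor $b_2(\Gamma)=z$ playing the role of $a_2(h)$, the necessary coercivity being supplied by the strictly positive coefficient $D\beta_1'(\Gamma)/\beta_1'(B)-\alpha_0(h,H)\Gamma\sigma'(\Gamma)$ in front of $\partial_x \Gamma$.

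The principal subtlety is that the bound $h^{-1}\le 1/\sqrt{\e}$ needed in the Young step cannot be taken for granted before \eqref{b101} has been proved. I would address this by exploiting the continuity $h\in \mc(\mj,W_2^1)\hookrightarrow \mc(\mj\times[0,L])$ from Proposition~\ref{P3} together with $h(0,\cdot)=h_{0,\e}\ge \sqrt{\e}$ to secure $h\ge \sqrt{\e}/2$ on a short initial interval first, close the argument there, and only then iterate. An analogous bootstrap, relying on the positivity properties of $\mn_\e$ and $\Gamma_{0,\e}\ge \e$, keeps $\beta_1'(B)$ bounded away from zero in the $\Gamma$-estimate.
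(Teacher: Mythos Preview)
Your argument is correct, but the paper takes a shorter route. For \eqref{b102} you agree with the paper exactly. For \eqref{b101} the paper simply observes that, once $H_\e$ and $\Sigma_\e$ (resp.\ $h_\e$, $H_\e$, $A_\e$, $B_\e$) are frozen as given functions of $(t,x)$, the constant $\sqrt{\e}$ (resp.\ $\e$) is an exact solution of the resulting scalar parabolic equation \eqref{b1} (resp.\ \eqref{b2}), because $a_{2,\e}(\sqrt{\e})=0$ (resp.\ $b_{2,\e}(\e)=0$) annihilates the cross term while all remaining terms involve $\partial_x$ of a constant. The comparison principle for scalar parabolic equations then gives $h\ge\sqrt{\e}$ and $\Gamma\ge\e$ in one stroke, with no need for a bootstrap. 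Your Stampacchia truncation plus Gr\"onwall is in effect a hands-on proof of that same comparison principle in this particular setting: it has the merit of being fully self-contained and of not relying on a comparison theorem for equations with frozen nonlocal coefficients, but the price is the continuation step you introduce to control $h^{-1}$ before the lower bound is in hand. Note incidentally that only $a_{2,\e}(\sqrt{\e})=0$ is needed, not $a_{2,\e}'(\sqrt{\e})=0$; and for the $\Gamma$-equation $b_{2,\e}(\Gamma)=z$ is linear, so the remainder after Young is directly of order $(z^-)^2$ without the extra $(z^-)^4\le\e(z^-)^2$ trick you used for $h$.
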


\begin{proof}
Since $a_2(\sqrt{\e})=b_2(\e)=0$ by \eqref{b5} and \eqref{b6} and since $h_0\ge\sqrt{\e}$ and $\Gamma_0\ge\e$ by \eqref{b13}, the assertion \eqref{b101} is a straightforward consequence of the comparison principle applied separately to \eqref{b1} and \eqref{b2}. We next integrate \eqref{b1} and \eqref{b2} over $(0,t)\times (0,L)$ and use \eqref{b3} to obtain \eqref{b102}.
\end{proof}

In the next lemma, we collect several properties of $H$, $\Sigma$, $A$, and $B$.

\begin{lemma}\label{leb102}
We have, for $(t,x)\in \mj\times (0,L)$,
\begin{eqnarray}
& & \|H(t)\|_p \le \|h(t)\|_p\,, \quad p\in [1,\infty]\,, \quad \sqrt{\e} \le H(t,x)\,, \label{b103a}\\
& & \|\partial_x H(t)\|_2^2 + 2\ \e^2\ \|\partial_x^2 H(t)\|_2^2 \le \|\partial_x h(t)\|_2^2\,, \label{b103}\\
& & \e^2\ \|\partial_x H(t)\|_\infty \le \|h(t)\|_1\,, \label{b103b}\\
& & \|A(t)\|_p \le \|\alpha_1(h(t))\|_p\,, \quad p\in [1,\infty]\,, \label{b104a}\\
& & \|\partial_x A(t)\|_2^2 + 2\ \e^2\ \|\partial_x^2 A(t)\|_2^2 \le \|\partial_x \alpha_1(h(t))\|_2^2\,, \label{b104}\\
& & \e^2\ \|\partial_x A(t)\|_\infty \le \|\alpha_1(h(t))\|_1\,, \label{b105}\\
& & \|B(t)\|_p \le \|\Gamma(t)\|_p\,, \quad p\in [1,\infty]\,, \quad \e \le B(t,x)\,, \label{b106}\\
& & \|\partial_x B(t)\|_2^2 + 2\ \e^2\ \|\partial_x^2 B(t)\|_2^2 \le \|\partial_x \Gamma(t)\|_2^2\,, \label{b107}\\
& & \e^2\ \|\partial_x B(t)\|_\infty \le \|\Gamma(t)\|_1\,, \label{b108}\\
& & \|\sqrt{H(t)} \partial_x \Sigma(t)\|_2^2 + 2\ \e^2\ \|\partial_x \left( H(t) \partial_x \Sigma(t) \right)\|_2^2 \le \|\sqrt{H(t)} \partial_x \sigma(\Gamma(t))\|_2^2\,, \label{b109}\\
& & \e^2\ \|H(t) \partial_x \Sigma(t)\|_\infty \le 2\|\sigma(\Gamma(t))\|_1\,. \label{b110}
\end{eqnarray}
\end{lemma}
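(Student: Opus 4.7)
The plan is to establish all ten estimates by exploiting the resolvent structure of \eqref{b0} and, in the case of $\Sigma$, of the weighted variant \eqref{b10}. The first nine bounds (involving $H$, $A$, $B$) share a single template, since all three unknowns satisfy a resolvent equation of the form $u-\e^2\partial_x^2 u = f$ with Neumann boundary conditions, where $f=h$, $\alpha_1(h)$, $\Gamma$ is nonnegative by Lemma~\ref{leb101} and the definition of $\alpha_1$. I would therefore write the argument out only once, say for the pair $(H,h)$, and indicate that the cases of $(A,\alpha_1(h))$ and $(B,\Gamma)$ are identical.

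For the pointwise bounds in \eqref{b103a} and \eqref{b106}, the lower bounds $H\ge\sqrt{\e}$ and $B\ge\e$ follow from the comparison principle, since the constants $\sqrt{\e}$ and $\e$ solve the same resolvent equation with themselves as source, and $h\ge\sqrt{\e}$, $\Gamma\ge\e$ by Lemma~\ref{leb101}. The $L^p$-contraction $\|H\|_p\le\|h\|_p$ is the standard resolvent estimate obtained by testing \eqref{b0} against $|u|^{p-2}u$ for $p\in(1,\infty)$, against a regularized sign for $p=1$, and directly by the maximum principle for $p=\infty$; in every case integration by parts against $-\e^2\partial_x^2 u$ yields a nonnegative dissipative contribution.

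For the $H^1$-bounds \eqref{b103}, \eqref{b104}, \eqref{b107}, I would multiply the resolvent equation by $-\partial_x^2 u$ and integrate, using the Neumann boundary conditions twice to discard boundary terms, producing
$$
\|\partial_x u\|_2^2 + \e^2\|\partial_x^2 u\|_2^2 \,=\, \int_0^L \partial_x f\,\partial_x u\,dx.
$$
Young's inequality with weight $1/2$ on the right-hand side, followed by absorption and multiplication by $2$, yields the sharp constants claimed. For the $W^{1,\infty}$-bounds \eqref{b103b}, \eqref{b105}, \eqref{b108}, I integrate the resolvent equation from $0$ to $x$ using $\partial_x u(0)=0$ to obtain $\e^2\partial_x u(x) = \int_0^x u\,dy - \int_0^x f\,dy$. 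Integrating the full equation over $(0,L)$ shows $\|u\|_1=\|f\|_1$; since $u,f\ge0$, both $\int_0^x u\,dy$ and $\int_0^x f\,dy$ lie in the interval $[0,\|f\|_1]$, so their difference has modulus at most $\|f\|_1$.

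The $\Sigma$-estimates require minor modifications due to the weighted coefficient $H$ in \eqref{b10}. For \eqref{b109}, I differentiate \eqref{b10} in $x$ and multiply by $H\partial_x\Sigma$, which vanishes at $x=0,L$ because $\partial_x\Sigma$ does, to obtain after integration by parts
$$
\|\sqrt{H}\partial_x\Sigma\|_2^2 + \e^2\|\partial_x(H\partial_x\Sigma)\|_2^2 \,=\, \int_0^L \partial_x\sigma(\Gamma)\,H\,\partial_x\Sigma\,dx,
$$
and splitting $H=\sqrt{H}\cdot\sqrt{H}$ in the right-hand side before applying Young's inequality gives \eqref{b109}. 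For \eqref{b110}, integrating \eqref{b10} from $0$ to $x$ yields $\e^2(H\partial_x\Sigma)(x) = \int_0^x\Sigma\,dy - \int_0^x\sigma(\Gamma)\,dy$. Since $\Sigma$ is of indeterminate sign, the interval trick used above is unavailable, so I would instead prove the $L^1$-contraction $\|\Sigma\|_1\le\|\sigma(\Gamma)\|_1$ by testing \eqref{b10} against a smooth approximation of $\mathrm{sign}(\Sigma)$ (the weighted dissipative contribution $\e^2\int H(\partial_x\Sigma)^2\,\mathrm{sign}_\delta'(\Sigma)\,dx$ being nonnegative), and apply the triangle inequality to account for the factor~$2$. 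This last $L^1$-argument for $\Sigma$ is the only step requiring slight care; everything else is routine resolvent analysis, and no genuine obstacle arises.
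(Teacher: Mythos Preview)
Your proposal is correct and follows essentially the same route as the paper: resolvent contraction and comparison principle for \eqref{b103a}, \eqref{b104a}, \eqref{b106}; testing the resolvent equation against $-\partial_x^2 u$ (equivalently, multiplying \eqref{b10} by $-\partial_x(H\partial_x\Sigma)$) for the $H^1$-type bounds; integrating the equation over $(0,x)$ for the $W^{1,\infty}$ bounds; and the $L^1$-contraction $\|\Sigma\|_1\le\|\sigma(\Gamma)\|_1$ via a regularized sign test to close \eqref{b110}. The only cosmetic difference is that for \eqref{b103b} the paper uses the cruder inequality $-\e^2\partial_x^2 H\le h$ (from $H\ge0$) and integrates separately over $(0,x)$ and $(x,L)$, whereas you use the exact identity $\e^2\partial_x u(x)=\int_0^x u-\int_0^x f$ together with $\|u\|_1=\|f\|_1$; both arguments yield the same bound.
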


\begin{proof}
The first assertion of \eqref{b103a} follows from the classical contraction properties of $\mathcal{N}$ while the second is a consequence of \eqref{b9}, \eqref{b101}, and the comparison principle. We next deduce from \eqref{b9} that 
$$
\|\partial_x H(t)\|_2^2 + \e^2\ \|\partial_x^2 H(t)\|_2^2 = \int_0^L \partial_x h(t)\ \partial_x H(t)\ dx \le \frac{\|\partial_x H(t)\|_2^2 + \|\partial_x h(t)\|_2^2}{2}\,,
$$
from which \eqref{b103} follows. Finally, we infer from \eqref{b9} and the positivity of $H$ that $-\e^2\ \partial_x^2 H \le h$. For $(t,x)\in \mathcal{J}\times (0,L)$, we integrate the previous inequality first over $(0,x)$ and then over $(x,L)$, and use the homogeneous Neumann boundary conditions to obtain
$$
-\e^2\ \partial_x H(t,x) \le \int_0^x h(t,y)\ dy \le \|h(t)\|_1 \;\;\mbox{ and }\;\; \e^2\ \partial_x H(t,x) \le \int_x^L h(t,y)\ dy \le \|h(t)\|_1\,.
$$
Combining these two inequalities gives \eqref{b103b}. 

Next, the proofs of \eqref{b104a}-\eqref{b105} and \eqref{b106}-\eqref{b108} are similar to those of \eqref{b103a}-\eqref{b103b}. 

We now turn to $\Sigma$ and first notice that, since $H\ge \sqrt{\e}$ by \eqref{b103a}, the solution $\Sigma$ to \eqref{b10} belongs to $W_2^2(0,L)$. We thus may multiply \eqref{b10} by $(-\partial_x (H \partial_x\Sigma))$ and argue as in the proof of \eqref{b103} to establish \eqref{b109}. Finally, consider $(t,x)\in \mathcal{J}\times (0,L)$. As in the proof of \eqref{b103b}, we integrate \eqref{b10} first over $(0,x)$ and then over $(x,L)$, and use the homogeneous Neumann boundary conditions to obtain 
\begin{eqnarray*}
-\e^2\ H(t,x) \partial_x \Sigma(t,x) & \le & \int_0^x (\sigma(\Gamma)-\Sigma)(t,y)\ dy \le \|\sigma(\Gamma(t))\|_1+\|\Sigma(t)\|_1\,, \\
\e^2\ H(t,x) \partial_x \Sigma(t,x) & \le & \int_x^L (\sigma(\Gamma)-\Sigma)(t,y)\ dy \le \|\sigma(\Gamma(t))\|_1+\|\Sigma(t)\|_1\,.
\end{eqnarray*}
As \eqref{b10} implies that $\|\Sigma(t)\|_1\le \|\sigma(\Gamma(t))\|_1$ by classical approximation and monotonicity arguments, we obtain \eqref{b110}.
\end{proof}

We next define
\begin{eqnarray}
J_f & := & - \partial_x \alpha_1(h) + \frac{a_2(h) \sqrt{H}}{\sqrt{h a_1(h)}}\ \partial_x \Sigma\,, \label{b111} \\
J_s & := & \sqrt{\alpha_0(h,H)}\ \partial_x \sigma(\Gamma) - G\ \frac{a_2(h)}{\sqrt{h a_1(h)}}\ \frac{b_2(\Gamma)}{\Gamma}\ \partial_x A\,, \label{b112}
\end{eqnarray}
and show the existence of a Liapunov functional for the regularized problem \eqref{b1}-\eqref{b4} inherited from the one of \eqref{a1}-\eqref{a4}. 

\begin{lemma}\label{leb103}
Given $t\in\mathcal{J}$, we have
\begin{equation}
\mathcal{L}(t) + \int_0^t \mathcal{D}(s)\ ds \le \mathcal{L}(0)\,, \label{b113} 
\end{equation}
with
\begin{equation}
\mathcal{L}(t) := \int_0^L \left[ \frac{G}{2}\ |h(t,x)|^2 + \phi(\Gamma(t,x)) \right]\ dx\,, \label{b114}
\end{equation}
\begin{equation}
\phi''(r) = - \frac{\sigma'(r)}{r} \ge 0\,, \quad \phi(1)=\phi'(1)=0\,, \label{b115}
\end{equation}
\begin{eqnarray}
2\mathcal{D}(t) & := &  G\ \|J_f(t)\|_2^2 + \|J_s(t)\|_2^2 + (\eta_1-\eta)\ \|\sqrt{H(t)} \partial_x\sigma(\Gamma(t))\|_2^2 \label{b116} \\
& & + (1-\eta_1)\ \|\sqrt{h(t)} \partial_x\sigma(\Gamma(t))\|_2^2 + (1-\eta) G\ \|\partial_x\alpha_1(h(t))\|_2^2\nonumber \\
& & + 2D\ \int_0^L \frac{\left| \partial_x\sigma(\Gamma(t)) \right|^2}{\beta_1'(B(t))}\ dx\,. \nonumber
\end{eqnarray}
\end{lemma}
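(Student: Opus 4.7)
The proof is by the standard entropy method adapted to the regularized coupled system: I would test equation \eqref{b1} by $Gh$ and equation \eqref{b2} by $\phi'(\Gamma)$, then add the resulting identities. All manipulations are justified by the regularity obtained in Proposition~\ref{P3} and by the strict lower bounds in \eqref{b101}. Note that since $\phi''\ge 0$ and $\phi(1)=\phi'(1)=0$, the function $\phi$ is nonnegative and $\phi(\Gamma)$ is integrable thanks to $\Gamma\ge\e>0$.

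For \eqref{b1}, multiplying by $Gh$, integrating by parts using the Neumann boundary condition, and recognizing $a_1(h)|\partial_x h|^2 = |\partial_x\alpha_1(h)|^2$ (because $\alpha_1'=\sqrt{a_1}$), one gets
$$
\frac{d}{dt}\frac{G}{2}\|h\|_2^2 = -G\|X\|_2^2 + G\int_0^L XY\,dx, \qquad X:=\partial_x\alpha_1(h),\ Y:=\frac{a_2(h)\sqrt{H}}{\sqrt{ha_1(h)}}\partial_x\Sigma,
$$
where by construction $J_f=-X+Y$. The polarization identity $2\int XY = \|X\|_2^2+\|Y\|_2^2-\|J_f\|_2^2$ then yields
$$
-\frac{d}{dt}G\|h\|_2^2 = G\|\partial_x\alpha_1(h)\|_2^2 + G\|J_f\|_2^2 - G\|Y\|_2^2.
$$
Here \eqref{b14} applies because of \eqref{b101}, so $a_2(h)^2/(ha_1(h))\le \eta/G$, and together with \eqref{b109} this gives the crucial bound $G\|Y\|_2^2 \le \eta\|\sqrt{H}\partial_x\sigma(\Gamma)\|_2^2$.

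For \eqref{b2}, multiplying by $\phi'(\Gamma)$, integrating by parts, and using $-\phi''(\Gamma)\partial_x\Gamma = \partial_x\sigma(\Gamma)/\Gamma$ together with $\beta_1'(\Gamma)=-\Gamma\sigma'(\Gamma)$, one computes
$$
\frac{d}{dt}\int_0^L\phi(\Gamma)\,dx = \int_0^L UV\,dx - D\int_0^L \frac{|\partial_x\sigma(\Gamma)|^2}{\beta_1'(B)}\,dx - \|U\|_2^2,
$$
with $U:=\sqrt{\alpha_0(h,H)}\partial_x\sigma(\Gamma)$ and $V:=G\,a_2(h)b_2(\Gamma)/\bigl(\Gamma\sqrt{ha_1(h)}\bigr)\,\partial_x A$, so that $J_s=U-V$. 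Applying the polarization identity to $J_s$ and multiplying by $-2$,
$$
-2\frac{d}{dt}\int_0^L \phi(\Gamma)\,dx = \|J_s\|_2^2 + \|U\|_2^2 - \|V\|_2^2 + 2D\int_0^L \frac{|\partial_x\sigma(\Gamma)|^2}{\beta_1'(B)}\,dx.
$$
Because $0\le b_2(\Gamma)/\Gamma\le 1$, the bound \eqref{b14} together with \eqref{b104} gives $\|V\|_2^2 \le \eta G\|\partial_x\alpha_1(h)\|_2^2$, while the definition \eqref{b7} of $\alpha_0$ splits $\|U\|_2^2 = \eta_1\|\sqrt{H}\partial_x\sigma(\Gamma)\|_2^2 + (1-\eta_1)\|\sqrt{h}\partial_x\sigma(\Gamma)\|_2^2$.

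Adding the two displayed inequalities, the $\|\partial_x\alpha_1(h)\|_2^2$ terms combine to $(1-\eta)G\|\partial_x\alpha_1(h)\|_2^2$, and $\|U\|_2^2-\eta\|\sqrt{H}\partial_x\sigma(\Gamma)\|_2^2$ becomes $(\eta_1-\eta)\|\sqrt{H}\partial_x\sigma(\Gamma)\|_2^2 + (1-\eta_1)\|\sqrt{h}\partial_x\sigma(\Gamma)\|_2^2$. Since $\eta<\eta_1<1$ by construction, every coefficient is nonnegative, and what we recover on the right-hand side is precisely $2\mathcal{D}(t)$ as defined in \eqref{b116}. Integration over $(0,t)$ then yields \eqref{b113}. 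The principal subtlety is the pairing: only the polarization identities applied simultaneously to $J_f$ and $J_s$ expose the correct combination in which the indefinite cross terms cancel and the residual lower-order terms left by \eqref{b14}, \eqref{b104}, and \eqref{b109} remain nonnegative.
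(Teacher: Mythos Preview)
Your proof is correct and follows essentially the same approach as the paper. The paper computes $\frac{d\mathcal{L}}{dt}$ directly and then rewrites the cross terms by completing the square with $J_f$ and $J_s$, packaging the remainders into two terms $\mathcal{R}_f$ and $\mathcal{R}_s$ that are estimated exactly via \eqref{b14}, \eqref{b109}, and \eqref{b104}; your polarization identities are just an alternative bookkeeping for the same algebraic step.
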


Observe that the last term in $\mathcal{D}(t)$ is well-defined as $\beta_1'(B)\ge \sigma_0 \e>0$ by \eqref{a6}, \eqref{b8}, and \eqref{b106}.

\begin{proof}
It follows from \eqref{b1}-\eqref{b3} that 
\begin{eqnarray*}
\frac{d\mathcal{L}}{dt} & = & G\ \int_0^L \partial_x h\ \left( - a_1(h)\ \partial_x h + \frac{a_2(h) \sqrt{H}}{\sqrt{h}}\ \partial_x \Sigma \right)\ dx \\
& & + \int_0^L \phi''(\Gamma)\ \partial_x\Gamma\ \left[ - D\ \frac{\beta_1'(\Gamma)}{\beta_1'(B)} + \alpha_0(h,H)\ \Gamma\ \sigma'(\Gamma) \right]\ \partial_x \Gamma\ dx \\
& & - G\ \int_0^L \phi''(\Gamma)\ \partial_x\Gamma\ \frac{a_2(h) b_2(\Gamma) \sqrt{\alpha_0(h,H)}}{\sqrt{h a_1(h)}}\ \partial_x A\ dx \\
& = & - \frac{G}{2}\ \|J_f\|_2^2 - \frac{1}{2}\ \|J_s\|_2^2 - D\ \int_0^L \frac{\left| \partial_x\sigma(\Gamma) \right|^2}{\beta_1'(B)}\ dx + \frac{1}{2}\ \mathcal{R}_f + \frac{G}{2}\ \mathcal{R}_s\,,
\end{eqnarray*}
with
\begin{eqnarray*}
\mathcal{R}_f & := & \int_0^L \left[ G\ \frac{a_2(h)^2 H}{h a_1(h)}\ \left| \partial_x \Sigma \right|^2  - \alpha_0(h,H)\ \left| \partial_x\sigma(\Gamma) \right|^2 \right]\ dx\,, \\
\mathcal{R}_s &:= &  \int_0^L \left[ G\ \frac{a_2(h)^2}{h a_1(h)}\ \frac{b_2(\Gamma)^2}{\Gamma^2}\ \left| \partial_x A\right|^2 - \left| \partial_x\alpha_1(h) \right|^2 \right]\ dx\,.
\end{eqnarray*}
On the one hand, it follows from \eqref{b14}, \eqref{b7}, and \eqref{b109} that
\begin{eqnarray*}
\mathcal{R}_f & \le & \int_0^L \left[ \eta\, H\, \left| \partial_x \Sigma \right|^2  - \alpha_0(h,H)\ \left| \partial_x\sigma(\Gamma) \right|^2 \right]\ dx \\
& \le & - (\eta_1-\eta)\ \left\| \sqrt{H} \partial_x\sigma(\Gamma) \right\|_2^2 - (1-\eta_1)\ \left\| \sqrt{h} \partial_x\sigma(\Gamma) \right\|_2^2\,.
\end{eqnarray*}
On the other hand, \eqref{b6}, \eqref{b14} and \eqref{b104} give
$$
\mathcal{R}_s \le \int_0^L \left[ \eta\ \left| \partial_x A\right|^2 - \left| \partial_x\alpha_1(h) \right|^2 \right]\ dx \le -(1-\eta)\ \left\| \partial_x\alpha_1(h) \right\|^2_{2}\,.
$$
Collecting the above inequalities yields Lemma~\ref{leb103} after integration with respect to time.
\end{proof}

We next estimate the $L_2$-norm of $\partial_x h$. While the previous estimates only depend mildly on $\e$, this will no longer be the case in the remainder of this section. In the following, the constants $C$, $C_j$, ... are independent of the free variables. Additional dependence on, say, $\e$ or $T>0$, we express explicitly by writing $C(\e)$, $C(\e,T)$, ...

\begin{lemma}\label{leb104}
We define the function $\mathcal{A}_1$ by $\mathcal{A}_1'=a_1$ and $\mathcal{A}_1(0)=0$. For $T>0$ and $t\in\mathcal{J}\cap [0,T]$, we have
\begin{eqnarray}
\|h(t)\|_\infty + \left\| \partial_x\mathcal{A}_1(h(t)) \right\|_2 & \le & C_1(\e,T)\,, \label{b117} \\
\int_0^t \left(  \left\| \partial_t \alpha_1(h(s)) \right\|_2^2 + \|\partial_x h(s)\|_\infty^2 \right)\ ds & \le & C_1(\e,T)\,. \label{b118}
\end{eqnarray}
\end{lemma}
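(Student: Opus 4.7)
The plan is to derive a Gronwall-type inequality by testing equation \eqref{b1} against $\partial_t \mathcal{A}_1(h) = a_1(h) \partial_t h$. Introducing the Marangoni flux $M := (a_2(h) \sqrt{H}/\sqrt{h}) \partial_x \Sigma$, the no-flux boundary conditions ensure that $a_1(h) \partial_x h$ and $M$ both vanish at $x = 0, L$. Integrating by parts in space, commuting mixed space-time derivatives (justified by the regularity from Proposition~\ref{P3}), and applying Young's inequality yield
\begin{equation*}
\frac{d}{dt} \|\partial_x \mathcal{A}_1(h)\|_2^2 + \|\partial_t \alpha_1(h)\|_2^2 \le \int_0^L a_1(h)\, (\partial_x M)^2 \, dx,
\end{equation*}
where we used $a_1(h) (\partial_t h)^2 = (\partial_t \alpha_1(h))^2$ and $a_1(h) \partial_x h = \partial_x \mathcal{A}_1(h)$.

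The second step is to dominate $\int_0^L a_1(h) (\partial_x M)^2 \, dx$ by $K(\varepsilon, t)(1 + \|\partial_x \mathcal{A}_1(h)\|_2^2)$ for some $K(\varepsilon, \cdot) \in L_1(0, T)$. Expanding $\partial_x M$ by the product rule, the troublesome factor $\partial_x^2 \Sigma$ is expressed via \eqref{b10} as $\varepsilon^2 H \partial_x^2 \Sigma = \Sigma - \sigma(\Gamma) - \varepsilon^2 \partial_x H\, \partial_x \Sigma$ and controlled using $H \ge \sqrt{\varepsilon}$ together with the $L_\infty$-bounds \eqref{b103b} on $\partial_x H$ and \eqref{b110} on $H \partial_x \Sigma$; observe that $\|\sigma(\Gamma)(t)\|_1$ is uniformly bounded in view of \eqref{a6b} and the mass conservation \eqref{b102}. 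The remaining terms are estimated using Lemma~\ref{leb102} and the dissipation terms in \eqref{b113}--\eqref{b116}, which give the $L_1(0,T)$-integrability of $K(\varepsilon, \cdot)$. Polynomial dependence on $\|h\|_\infty$ coming from $a_2(h)/\sqrt{h}$ is handled self-consistently via the 1D Sobolev-type inequality $\|\mathcal{A}_1(h)\|_\infty \le L^{-1} \|\mathcal{A}_1(h)\|_1 + \sqrt{L}\, \|\partial_x \mathcal{A}_1(h)\|_2$ combined with $\|\mathcal{A}_1(h)\|_1 \le C \|h\|_\infty^2 \|h\|_2^2$ and the $L_\infty(L_2)$-bound on $h$ inherited from the energy estimate \eqref{b113}; after applying Young's inequality this yields $\|h\|_\infty^4 \le C(1 + \|\partial_x \mathcal{A}_1(h)\|_2)$, which closes the circle.

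Applying Gronwall's lemma to the resulting differential inequality furnishes both the bound $\|\partial_x \mathcal{A}_1(h)(t)\|_2 \le C_1(\varepsilon, T)$ and $\int_0^t \|\partial_t \alpha_1(h(s))\|_2^2 \, ds \le C_1(\varepsilon, T)$ for $t \in \mathcal{J} \cap [0, T]$; the $L_\infty$-bound on $h$ follows from the Sobolev-type inequality just cited. For the final estimate on $\int_0^t \|\partial_x h(s)\|_\infty^2 \, ds$, rewrite \eqref{b1} as $\partial_x^2 \mathcal{A}_1(h) = \partial_t h + \partial_x M$. Since $a_1(h) \ge G \varepsilon^{3/2}/3$ by \eqref{b101}, the $L_2((0,T) \times (0, L))$-bound on $\partial_t \alpha_1(h)$ transfers to $\partial_t h$, and $\partial_x M \in L_2((0,T) \times (0, L))$ by the work in step~2. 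Hence $\mathcal{A}_1(h) \in L_2(0, T; W_2^2(0, L)) \hookrightarrow L_2(0, T; \mathcal{C}^1([0, L]))$, and dividing $\partial_x \mathcal{A}_1(h) = a_1(h) \partial_x h$ by $a_1(h)$ produces the desired bound on $\|\partial_x h\|_\infty$.

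The main obstacle is the careful estimation of $\int_0^L a_1(h) (\partial_x M)^2 \, dx$ in the second step: it requires a delicate balancing of the polynomial growth in $\|h\|_\infty$ coming from the Marangoni coefficient against the $\varepsilon$-dependent $L_\infty$-bounds on the mollified quantities $H$ and $\Sigma$ and the available energy dissipation. The regularization encoded in \eqref{b1aa}--\eqref{b10} — in particular the smoothing by $\mathcal{N}_\varepsilon$ and the elliptic mollification of $\sigma(\Gamma)$ into $\Sigma_\varepsilon$ — is specifically tailored so that these bounds fit together and the Gronwall argument can be closed.
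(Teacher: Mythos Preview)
Your proposal is correct and follows essentially the same strategy as the paper: test \eqref{b1} against $\partial_t\mathcal{A}_1(h)$, estimate $\|\sqrt{a_1(h)}\,\partial_x M\|_2^2$ with the help of Lemma~\ref{leb102} and the dissipation $\mathcal{D}$ from Lemma~\ref{leb103}, close via the Sobolev-type inequality $\|\mathcal{A}_1(h)\|_\infty \le C(1+\|\partial_x\mathcal{A}_1(h)\|_2)$, apply Gronwall, and finally recover $\|\partial_x h\|_\infty\in L_2(0,T)$ from the resulting $L_2$-control on $\partial_x^2\mathcal{A}_1(h)=\partial_t h+\partial_x M$. The only cosmetic difference is that the paper decomposes $-\partial_x M=F_1+F_2+F_3$ with $F_3=-\tfrac{a_2(h)}{\sqrt{hH}}\,\partial_x(H\partial_x\Sigma)$ and invokes \eqref{b109} to bound $\|\partial_x(H\partial_x\Sigma)\|_2$ by the dissipation term $\|\sqrt{H}\,\partial_x\sigma(\Gamma)\|_2$, whereas you express $\varepsilon^2 H\partial_x^2\Sigma$ via \eqref{b10}; since $\Sigma-\sigma(\Gamma)=\varepsilon^2\partial_x(H\partial_x\Sigma)$ and \eqref{b109} is itself derived from \eqref{b10}, the two routes are equivalent (note in particular that the $L_1$-bound on $\sigma(\Gamma)$ alone is not enough for the $\Sigma-\sigma(\Gamma)$ contribution---you do need the $L_2$-control coming from \eqref{b109} and the dissipation, as you indicate in your ``remaining terms'' clause).
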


\begin{proof}
Introducing $a_0(h):=h$ and 
$$
F_1 := - \left( \frac{a_2}{\sqrt{a_0}} \right)'(h)\ \partial_x h\ \sqrt{H}\ \partial_x\Sigma\,, \quad F_2 := \frac{a_2(h)}{2\sqrt{h}}\ \frac{\partial_x H}{\sqrt{H}}\ \partial_x\Sigma\,, \quad F_3 := - \frac{a_2(h)}{\sqrt{h H}}\ \partial_x\left( H \partial_x\Sigma \right)\,,
$$
equation \eqref{b1} reads
\begin{equation}
\partial_t h - \partial_x^2\mathcal{A}_1(h) = F_1 + F_2 +F_3\,. \label{b119}
\end{equation}
Recalling that $\alpha_1'=\sqrt{a_1}$, it follows from \eqref{b119} that
$$
\int_0^L \partial_t h\ \partial_t\mathcal{A}_1(h)\ dx + \int_0^L \partial_x\mathcal{A}_1(h)\ \partial_t\partial_x\mathcal{A}_1(h)\ dx = \int_0^L (F_1+F_2+F_3)\ \partial_t\mathcal{A}_1(h)\ dx\,,
$$
$$
\left\| \partial_t\alpha_1(h) \right\|_2^2 + \frac{1}{2}\ \frac{d}{dt} \left\| \partial_x\mathcal{A}_1(h) \right\|_2^2 \le \frac{1}{2}\ \left\| \partial_t\alpha_1(h) \right\|_2^2 + \frac{1}{2}\ \int_0^L a_1(h)\ (F_1+F_2+F_3)^2\ dx\ ,
$$
\begin{equation}
\left\| \partial_t\alpha_1(h) \right\|_2^2 + \frac{d}{dt} \left\| \partial_x\mathcal{A}_1(h) \right\|_2^2 \le 3\ \sum_{i=1}^3 \left\| \sqrt{a_1(h)}\ F_i \right\|_2^2\,. \label{b120}
\end{equation}
To estimate the term involving $F_1$, we write
\begin{eqnarray*}
\left\| \sqrt{a_1(h)}\ F_1 \right\|_2 & = & \left\| \left( \frac{a_2}{\sqrt{a_0}} \right)'(h)\ \frac{\partial_x \mathcal{A}_1(h)}{\sqrt{a_1(h)}}\ \sqrt{H}\ \partial_x\Sigma\right\|_2 \\
& \le & \left\| \left( \frac{a_2}{\sqrt{a_0}} \right)'(h)\ \frac{1}{\sqrt{a_1(h)}} \right\|_\infty\ \left\| \partial_x \mathcal{A}_1(h) \right\|_2\ \left\| \sqrt{H}\ \partial_x\Sigma\right\|_\infty\,,
\end{eqnarray*}
and observe that \eqref{a6b}, \eqref{b102}, \eqref{b103a}, and \eqref{b110} ensure that
$$
\left\| \sqrt{H}\ \partial_x\Sigma\right\|_\infty \le \frac{\left\| H\ \partial_x\Sigma\right\|_\infty}{\e^{1/4}} \le \frac{2\ \|\sigma(\Gamma)\|_1}{\e^{9/4}} \le C(\e)\ (1+\|\Gamma\|_1)\le C(\e)\,,
$$
while we infer from \eqref{b5} and \eqref{b101} that
$$
\left\| \left( \frac{a_2}{\sqrt{a_0}} \right)'(h)\ \frac{1}{\sqrt{a_1(h)}} \right\|_\infty = \sqrt{\frac{3}{G}}\ \left\| \frac{3 h^2 - 2 \sqrt{\e} h - \e}{4 h^3} \right\|_\infty \le \frac{C}{\sqrt{\e}}\,.
$$
Consequently,
\begin{equation}
\left\| \sqrt{a_1(h)}\ F_1 \right\|_2 \le C(\e)\ \left\| \partial_x \mathcal{A}_1(h) \right\|_2\,. \label{b121}
\end{equation}

We next turn to the term involving $F_2$ and deduce from \eqref{b14} and \eqref{b103a} that
\begin{eqnarray*}
\left\| \sqrt{a_1(h)}\ F_2 \right\|_2 & = & \left\| \frac{a_2(h)}{2\sqrt{h a_1(h)}}\ a_1(h)\ \frac{\partial_x H}{H^{3/2}}\ H \partial_x\Sigma \right\|_2 \\
& \le & \left\| \frac{a_2(h)}{2\sqrt{h a_1(h)}} \right\|_\infty\ \|a_1(h)\|_2\ \frac{\|\partial_x H\|_\infty}{\e^{3/4}}\ \left\| H \partial_x\Sigma \right\|_\infty \\
& \le & \left( \frac{\eta}{4G \e^{3/2}} \right)^{1/2}\ \|a_1(h)\|_2\ \|\partial_x H\|_\infty\ \left\| H \partial_x\Sigma \right\|_\infty\,.
\end{eqnarray*}
Owing to \eqref{a6b}, \eqref{b102}, \eqref{b103b}, and \eqref{b110}, we obtain
$$
\left\| \sqrt{a_1(h)}\ F_2 \right\|_2 \le C(\e)\ \|a_1(h)\|_2\ \frac{\| h\|_1}{\e^2}\ \frac{2\ \left\| \sigma(\Gamma) \right\|_1}{\e^2} \le C(\e)\ \|a_1(h)\|_2\,.
$$
Since
\begin{equation}
a_1(h) = \frac{4\mathcal{A}_1(h)}{h} \le \frac{4 \mathcal{A}_1(h)}{\sqrt{\e}} \label{b122}
\end{equation}
by \eqref{b5} and \eqref{b101}, we end up with
\begin{equation}
\left\| \sqrt{a_1(h)}\ F_2 \right\|_2 \le C(\e)\ \|\mathcal{A}_1(h)\|_\infty\,. \label{b123}
\end{equation}
Finally, by \eqref{b14}, \eqref{b103a}, \eqref{b109}, \eqref{b116}, and \eqref{b122}, we have 
\begin{eqnarray*}
\|\sqrt{a_1(h)}\ F_3\|_2 & = & \left\| \frac{a_2(h)}{\sqrt{h a_1(h)}}\ \frac{\sqrt{a_1(h)}}{\sqrt{H}}\ \partial_x\left( H \partial_x\Sigma \right) \right\|_2 \\
& \le & \left( \frac{\eta}{G \e^{1/2}} \right)^{1/2}\ \|a_1(h)\|_\infty^{1/2}\ \left\| \partial_x\left( H \partial_x\Sigma \right) \right\|_2 \\
& \le & C(\e)\ \|a_1(h)\|_\infty^{1/2}\ \frac{\left\| \sqrt{H} \partial_x\sigma(\Gamma) \right\|_2}{\e} \, ,
\end{eqnarray*}
whence
\begin{equation}
\|\sqrt{a_1(h)}\ F_3\|_2 \le C(\e)\ \mathcal{D}^{1/2}\ \|\mathcal{A}_1(h)\|_\infty^{1/2}\,. \label{b124}
\end{equation}
It then follows from \eqref{b120}, \eqref{b121}, \eqref{b123}, and \eqref{b124} that
\begin{equation}
\left\| \partial_t\alpha_1(h) \right\|_2^2 + \frac{d}{dt} \left\| \partial_x\mathcal{A}_1(h) \right\|_2^2 \le C(\e)\ \left( \left\|\partial_x\mathcal{A}_1(h) \right\|_2^2 + \|\mathcal{A}_1(h)\|_\infty^2 + \mathcal{D}\ \|\mathcal{A}_1(h)\|_\infty \right)\,. \label{b125}
\end{equation}
Owing to \eqref{b5} and $\mathcal{A}_1'=a_1$, we have, for $(t,x)\in\mathcal{J}\times (0,L)$,
\begin{eqnarray*}
0 \le L \mathcal{A}_1(h(t,x)) & \le & \|\mathcal{A}_1(h(t))\|_1 + L^{3/2}\ \left\| \partial_x \mathcal{A}_1(h(t)) \right\|_2 \\
& \le & \left( \frac{G}{12} \right)^{1/4}\ \int_0^L h(t,y)\ \mathcal{A}_1(h(t,y))^{3/4}\ dy + L^{3/2}\ \left\| \partial_x \mathcal{A}_1(h(t)) \right\|_2 \\
& \le & \left( \frac{G}{12} \right)^{1/4}\ \|h\|_1\ \|\mathcal{A}_1(h(t))\|_\infty^{3/4} + L^{3/2}\ \left\| \partial_x \mathcal{A}_1(h(t)) \right\|_2 \,,
\end{eqnarray*}
and we thus infer from \eqref{b102} and Young's inequality that 
$$
0 \le L \mathcal{A}_1(h(t,x)) \le \frac{3L}{4}\ \|\mathcal{A}_1(h(t)) \|_\infty + \frac{G \|h_0\|_1^4}{48 L^3} + L^{3/2}\ \left\| \partial_x \mathcal{A}_1(h(t)) \right\|_2\,,
$$
whence
\begin{equation}
\|\mathcal{A}_1(h(t)) \|_\infty \le C\ \left( 1 + \left\| \partial_x \mathcal{A}_1(h(t)) \right\|_2 \right)\,, \quad t\in\mathcal{J}\,.\label{b126}
\end{equation}
Inserting this inequality in \eqref{b125} gives
$$
\left\| \partial_t\alpha_1(h) \right\|_2^2 + \frac{d}{dt} \left\| \partial_x\mathcal{A}_1(h) \right\|_2^2 \le C(\e)\ \left( 1 + \mathcal{D} \right)\ \left( 1 + \left\|\partial_x\mathcal{A}_1(h) \right\|_2^2 \right)\,,
$$
from which we conclude that, for $t\in\mathcal{J}\cap [0,T]$, 
$$
\left\| \partial_x\mathcal{A}_1(h(t)) \right\|_2^2 + \int_0^t \left\| \partial_t\alpha_1(h(s)) \right\|_2^2\ ds \le \left( 1 + \left\| \partial_x\mathcal{A}_1(h_0) \right\|_2^2 \right)\ \exp{\left\{ C(\e)\ \left( t+ \int_0^t \mathcal{D}(s)\ ds \right) \right\}} \, ,
$$
hence by \eqref{b113}
\begin{equation}
\left\| \partial_x\mathcal{A}_1(h(t)) \right\|_2^2 + \int_0^t \left\| \partial_t\alpha_1(h(s)) \right\|_2^2\ ds \le C(\e,T)\ . \label{b127}
\end{equation}

A first consequence of \eqref{b126} and \eqref{b127} is that \eqref{b117} holds true. Next, recalling \eqref{b3}, \eqref{b101}, \eqref{b119}, \eqref{b121}, \eqref{b123}, and \eqref{b124}, we obtain for $t\in \mathcal{J}\cap [0,T]$ and $x\in (0,L)$:
\begin{eqnarray*}
\left| \partial_x \mathcal{A}_1(h(t,x)) \right| & = & \left| \int_0^x \partial_x^2 \mathcal{A}_1(h(t,y))\ dy \right| = \left| \int_0^x \left( \sum_{i=1}^3 F_i -\partial_t h \right)(t,y)\ dy  \right| \\
& \le & \int_0^L \frac{1}{\sqrt{a_1(h)}}\ \left[ |\partial_t \alpha_1(h)| + \sum_{i=1}^3 \sqrt{a_1(h)}\ |F_i| \right]\ dy \\
& \le & C(\e)\ \left( \left\| \partial_t \alpha_1(h) \right\|_2 + \sum_{i=1}^3 \left\| \sqrt{a_1(h)}\ F_i \right\|_2 \right) \\
& \le & C(\e)\ \left( \left\| \partial_t \alpha_1(h) \right\|_2 + \left\|\partial_x\mathcal{A}_1(h) \right\|_2 + \|\mathcal{A}_1(h)\|_\infty + \mathcal{D}^{1/2}\ \|\mathcal{A}_1(h)\|_\infty^{1/2} \right) \,,
\end{eqnarray*}
and we infer from \eqref{b126} and \eqref{b127} that 
$$
\left\| \partial_x \mathcal{A}_1(h(t)) \right\|_\infty  \le C(\e,T)\ \left( 1 + \left\| \partial_t \alpha_1(h(t)) \right\|_2 + \mathcal{D}(t)^{1/2} \right)\,.
$$
Using next \eqref{b113} and \eqref{b127} we obtain
$$
\int_0^t \left\| \partial_x \mathcal{A}_1(h(s)) \right\|_\infty^2\ ds \le C(\e,T)\ \left[ 1 + \int_0^t \left( \left\| \partial_t \alpha_1(h(s)) \right\|_2^2 + \mathcal{D}(s) \right)\ ds \right] \le C(\e,T)\,.
$$
Since 
$$
\left| \partial_x h \right| = \frac{\left| \partial_x \mathcal{A}_1(h) \right|}{a_1(h)} \le \frac{3}{G \e^{3/2}}\ \left\| \partial_x \mathcal{A}_1(h) \right\|_\infty 
$$
by \eqref{b5} and \eqref{b101}, the estimate \eqref{b118} follows from the above analysis and \eqref{b127}.
\end{proof}

We now improve the estimates on $\Gamma$ and begin with an $L_\infty$-bound.

\begin{lemma}\label{leb105}
Given $T>0$ and $t\in\mathcal{J}\cap [0,T]$, we have
\begin{equation}
\|\Gamma(t)\|_\infty \le C_2(\e,T)\,. \label{b128}
\end{equation}
\end{lemma}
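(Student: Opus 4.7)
The plan is to derive the $L_\infty$-bound on $\Gamma$ via an $L_p$-energy estimate combined with a Moser-type iteration, exploiting the positive diffusion already present in the $\Gamma$-equation together with the $L_\infty$-bound on $h$ established in Lemma~\ref{leb104}.

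First, I rewrite \eqref{b2} in the divergence form $\partial_t \Gamma = \partial_x(D_1\,\partial_x \Gamma + f)$ with
$$
D_1 := D\,\frac{\beta_1'(\Gamma)}{\beta_1'(B)} - \alpha_0(h,H)\,\Gamma\,\sigma'(\Gamma), \qquad f := G\,\frac{a_2(h)\,b_2(\Gamma)\,\sqrt{\alpha_0(h,H)}}{\sqrt{h\,a_1(h)}}\,\partial_x A.
$$
Thanks to \eqref{a6}, \eqref{b7}, and the lower bound $h\ge \sqrt{\e}$ from \eqref{b101}, I obtain the pointwise inequality $D_1 \ge c(\e)\,\Gamma$ with $c(\e) := \sigma_0(1-\eta_1)\sqrt{\e}$. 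For the flux, the explicit form $\alpha_1(h) = \tfrac{2}{5}\sqrt{G/3}\,h^{5/2}$, the bound $\|h\|_\infty \le C_1(\e,T)$ from \eqref{b117}, and \eqref{b105} yield $\|\partial_x A\|_\infty \le C(\e,T)$, and since $b_2(\Gamma)\le \Gamma$, I get $|f|\le C(\e,T)\,\Gamma$.

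Then, for fixed $p\ge 2$, I test with $p\Gamma^{p-1}$ and integrate by parts using the Neumann condition \eqref{b3}. Young's inequality applied to the mixed term absorbs half of the integrated diffusion contribution, and the pointwise bound $f^2/D_1\le C(\e,T)\,\Gamma$ leads to
$$
\frac{d}{dt}\|\Gamma\|_p^p + \frac{c(\e)\,p(p-1)}{2}\int_0^L \Gamma^{p-1}(\partial_x\Gamma)^2\,dx \le C(\e,T)\,p(p-1)\,\|\Gamma\|_{p-1}^{p-1}.
$$
Rewriting $\Gamma^{p-1}(\partial_x\Gamma)^2 = \tfrac{4}{(p+1)^2}\,(\partial_x \Gamma^{(p+1)/2})^2$ and invoking the one-dimensional Sobolev embedding $W_2^1(0,L)\hookrightarrow L_\infty(0,L)$ applied to $\Gamma^{(p+1)/2}$, together with the mass control \eqref{b102}, I produce a Moser-type recurrence relating $\|\Gamma\|_{L_\infty(0,T;L_{p_{k+1}})}$ to $\|\Gamma\|_{L_\infty(0,T;L_{p_k})}$ along the geometric sequence $p_k = 2^k$.

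The main technical obstacle is the standard one for Moser schemes: ensuring that the accumulated step-wise constants remain finite in the limit $k\to\infty$. Since these constants grow only polynomially in $p_k$ while $p_k$ grows geometrically, the infinite product of their $p_k$-th roots converges, and the iteration delivers the desired bound $\|\Gamma(t)\|_\infty \le C_2(\e,T)$ for $t\in \mathcal{J}\cap [0,T]$.
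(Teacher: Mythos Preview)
Your approach is genuinely different from the paper's, and the contrast is worth spelling out. The paper does \emph{not} iterate: it expands the divergence of the flux, writing \eqref{b2} as $\mathcal{P}\Gamma=0$ for a parabolic operator whose zeroth-order coefficient $q_1$ is the spatial derivative of the drift velocity. Using the $L_2$-in-time bound on $\|\partial_x h\|_\infty$ from \eqref{b118}, the paper shows $\int_0^t\|q_1(s)\|_\infty\,ds\le C(\e,T)$, then constructs an ODE supersolution $Q(t)$ and applies the comparison principle directly. This yields the $L_\infty$-bound in one stroke, with no iteration and no Sobolev embedding.

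Your Moser scheme, by contrast, avoids differentiating the drift coefficient and does not need \eqref{b118}, only \eqref{b117} and \eqref{b105}. That is a conceptual gain. However, there is a genuine gap in the iteration step as you describe it. Your energy inequality has $\|\Gamma\|_{p-1}^{p-1}$ on the right, while the gradient term controls $\partial_x\Gamma^{(p+1)/2}$ and the time derivative controls $\|\Gamma\|_p^p$. None of these exponents align with the geometric ladder $p_k=2^k$, so the passage from your displayed inequality to a recurrence of the form $M_{k+1}\le C_k^{1/p_k}M_k$ (or $M_{k+1}^{p_{k+1}}\le C_k M_k^{p_k}$) is not explained. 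The standard Moser argument that ``polynomial constants with geometric $p_k$ give a convergent product'' only applies once such a recurrence is in hand; getting there from your estimate requires an additional Gagliardo--Nirenberg interpolation to re-express $\|\Gamma\|_{p-1}^{p-1}$ in terms of $\|\partial_x\Gamma^{(p+1)/2}\|_2$ and a genuinely lower norm, then absorbing the gradient piece. Without that step, a naive bound (e.g.\ $\|\Gamma\|_{p-1}^{p-1}\le C\|\Gamma\|_p^{p-1}$) yields $M_p\le C'p^2$, which does not converge. The iteration can likely be closed, but it is not the routine exercise your last paragraph suggests.
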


\begin{proof}
Let $T>0$ and $t\in\mathcal{J}\cap [0,T]$ be given. Define again $a_0(h)=h$,
$$
q := - \frac{a_2(h)}{\sqrt{a_0(h) a_1(h)}}\ \sqrt{\alpha_0(h,H)}\ \partial_x A \;\;\mbox{ and }\;\; q_1 := \partial_x q\,,
$$
and the parabolic operator
\begin{eqnarray*}
\mathcal{P}w & := & \partial_t w - \partial_x \left[ \left( D\ \frac{\beta_1'(\Gamma)}{\beta_1'(B)}  - \alpha_0(h,H)\ \Gamma \sigma'(\Gamma) \right)\ \partial_x w \right] + G\ q\ \partial_x w + G\ q_1\ b_2(w)\,,
\end{eqnarray*}
so that \eqref{b2} also reads
\begin{equation}
\mathcal{P}\Gamma = 0  \;\;\mbox{ in }\;\; \mj\setminus\{0\} \times (0,L)\,. \label{b129}
\end{equation}
We next observe that $q_1 = q_{11} + q_{12} + q_{13}$ with
\begin{eqnarray*}
q_{11} & := & - \left( \frac{a_2}{\sqrt{a_0 a_1}} \right)'(h)\ \partial_x h\ \sqrt{\alpha_0(h,H)}\ \partial_x A\,, \\
q_{12} & := & - \frac{a_2(h)}{2\sqrt{a_0(h) a_1(h)}}\ \frac{\eta_1 \partial_x H + (1-\eta_1) \partial_x h}{\sqrt{\alpha_0(h,H)}}\ \partial_x A\,, \\
q_{13} & := & - \frac{a_2(h)}{\sqrt{a_0(h) a_1(h)}}\ \sqrt{\alpha_0(h,H)}\ \partial_x^2 A\,.
\end{eqnarray*}
By \eqref{b101}, \eqref{b103a}, \eqref{b105}, and \eqref{b117}, we have
\begin{eqnarray}
 \|q_{11}\|_\infty & \le & \left( \frac{3\e}{G} \right)^{1/2}\  \left\| \frac{h-\sqrt{\e}}{h^3} \right\|_\infty\ \left\| \partial_x h \right\|_\infty\ \left( \eta_1\ \|H\|_\infty + (1-\eta_1)\ \|h\|_\infty \right)^{1/2}\ \left\| \partial_x A \right\|_\infty \nonumber\\
& \le & C(\e)\ \left\| \partial_x h \right\|_\infty\ \|h\|_\infty^{1/2}\ \frac{\|\alpha_1(h)\|_1}{\e^2}\ ,\quad \nonumber 
\end{eqnarray}
that is,
\begin{equation}
 \|q_{11}\|_\infty  \le  C(\e,T)\ \left\| \partial_x h \right\|_\infty\,. \label{b130}
\end{equation}
We next infer from \eqref{b14}, \eqref{b101}, \eqref{b102}, \eqref{b103a}, \eqref{b103b}, \eqref{b105}, and \eqref{b117} that
\begin{eqnarray}
 \|q_{12}\|_\infty & \le & \left( \frac{\eta}{4G} \right)^{1/2}\ \frac{\eta_1 \left\| \partial_x H \right\|_\infty + (1-\eta_1) \left\| \partial_x h \right\|_\infty}{\e^{1/4}}\ \left\| \partial_x A \right\|_\infty \nonumber \\
& \le & C(\e)\ \left( \frac{\|h_0\|_1}{\e^2} + \left\|\partial_x h \right\|_\infty \right)\ \frac{\|\alpha_1(h)\|_1}{\e^2} \ ,\nonumber 
\end{eqnarray}
that is,
\begin{equation}
 \|q_{12}\|_\infty  \le  C(\e,T)\ \left( 1 + \left\| \partial_x h(t) \right\|_\infty \right)\,. \label{b132}
\end{equation}
It finally follows from \eqref{b0}, \eqref{b14}, \eqref{b9}, \eqref{b103a}, \eqref{b104a}, and \eqref{b117} that
$$
|q_{13}| \le \left( \frac{\eta}{G} \right)^{1/2}\ \|h\|_\infty^{1/2}\ \frac{|\alpha_1(h)-A|}{\e^2} \le C(\e,T)\ \left( \|\alpha_1(h)\|_\infty + \|A\|_\infty \right) \le C(\e,T)\ \|\alpha_1(h)\|_\infty \,,
$$ 
and thus
\begin{equation}
\|q_{13}\|_\infty \le C(\e,T)\,. \label{b133}
\end{equation}
Combining \eqref{b130}, \eqref{b132}, and \eqref{b133}, we conclude that 
$$
\|q_1(t)\|_\infty \le C(\e,T)\ \left( 1 + \left\| \partial_x h(t) \right\|_\infty \right)\,,
$$
which by \eqref{b118} gives

\begin{equation}
\int_0^t \|q_1(s)\|_\infty\ ds \le C(\e,T)\,, \quad t\in\mathcal{J}\cap [0,T]\,. \label{b134}
\end{equation}

Now, let $Q$ be the solution to the ordinary differential equation
\begin{equation}
\frac{dQ}{dt}(t) - G\ \|q_1(t)\|_\infty\ b_2(Q(t)) = 0\,, \quad t\in\mathcal{J}\,, \label{b135}
\end{equation}
with initial condition $Q(0):=\|\Gamma_0\|_\infty\ge \e$. We clearly have $Q(t)\ge \e$ for $t\in\mathcal{J}$ and
$$
\mathcal{P}Q(t) = G\ \left( \|q_1(t)\|_\infty + q_1(t ,x) \right)\ b_2(Q(t)) \ge 0\,, \quad  (t,x)\in\mathcal{J}\setminus\{0\}\times (0,L)\,.
$$
Recalling \eqref{b129}, the comparison principle entails that 
\begin{equation}
\Gamma(t,x) \le Q(t)\,, \quad (t,x)\in\mathcal{J}\times [0,L]\,. \label{b136}
\end{equation}
Since $b_2(Q)\le Q$, we deduce from \eqref{b134}, \eqref{b135}, and \eqref{b136} that, for $T>0$ and $t\in\mathcal{J}\cap [0,T]$,
$$
\|\Gamma(t)\|_\infty \le Q(t) \le Q(0)\ \exp{\left\{ G\ \int_0^t \|q_1(s)\|_\infty\ ds \right\}} \le C(\e,T)\,,
$$
as expected.
\end{proof}

The final step of the proof of Theorem~\ref{T1} is an $L_2$-estimate on $\partial_x \Gamma$.

\begin{lemma}\label{leb106}
For $T>0$ and $t\in\mathcal{J}\cap [0,T]$, we have
\begin{equation}
\left\| \partial_x\Gamma(t) \right\|_2 \le C_3(\e,T)\,. \label{b137}
\end{equation}
\end{lemma}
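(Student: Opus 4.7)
My plan is to derive an energy-type estimate for $\partial_x\Gamma$ by testing equation \eqref{b2} against $-\partial_x^2\Gamma$. Introducing the diffusion coefficient $d:=D\beta_1'(\Gamma)/\beta_1'(B) - \alpha_0(h,H)\,\Gamma\,\sigma'(\Gamma)$ and the convective flux $q:=G\,a_2(h)\,b_2(\Gamma)\,\sqrt{\alpha_0(h,H)}\,\partial_x A/\sqrt{h\,a_1(h)}$, so that \eqref{b2} reads $\partial_t\Gamma=\partial_x(d\,\partial_x\Gamma+q)$, multiplication by $-\partial_x^2\Gamma$, integration over $(0,L)$, and integration by parts using the Neumann boundary conditions give
\begin{equation*}
\frac{1}{2}\frac{d}{dt}\|\partial_x\Gamma\|_2^2 + \int_0^L d\,|\partial_x^2\Gamma|^2\,dx = -\int_0^L \partial_x d\,\partial_x\Gamma\,\partial_x^2\Gamma\,dx - \int_0^L \partial_x q\,\partial_x^2\Gamma\,dx\,.
\end{equation*}

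Four ingredients enable closing this identity. First, \eqref{a6}, \eqref{b101}, \eqref{b106}, and Lemma~\ref{leb105} together imply $d\ge c_0(\e,T)>0$ on $\mathcal{J}\cap[0,T]\times(0,L)$, providing the dissipative lower bound $\int_0^L d\,|\partial_x^2\Gamma|^2\,dx\ge c_0\|\partial_x^2\Gamma\|_2^2$. Second, a direct calculation yields the pointwise estimate $|\partial_x d|\le C(\e,T)\bigl(|\partial_x\Gamma|+|\partial_x h|+|\partial_x H|+|\partial_x B|\bigr)$, in which $\|\partial_x H\|_\infty$ and $\|\partial_x B\|_\infty$ are bounded by $C(\e,T)$ thanks to \eqref{b103b} and \eqref{b108}, while $\|\partial_x h\|_\infty$ belongs to $L_2(0,T)$ by \eqref{b118}. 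Third, the estimate $\|\partial_x q\|_\infty\le C(\e,T)(1+\|\partial_x h\|_\infty)$ was already obtained during the proof of Lemma~\ref{leb105}, so that $\|\partial_x q\|_2^2\in L_1(0,T)$. Fourth, the vanishing of $\partial_x\Gamma$ at the endpoints yields the elementary bound $\|\partial_x\Gamma\|_\infty\le\sqrt{L}\,\|\partial_x^2\Gamma\|_2$ by writing $\partial_x\Gamma(x)=\int_0^x\partial_x^2\Gamma\,dy$ and applying Cauchy--Schwarz.

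Plugging these bounds into the identity and invoking Young's inequality, one absorbs a suitable portion of $\|\partial_x^2\Gamma\|_2^2$ into the dissipative term on the left-hand side and produces a differential inequality of Gr\"onwall type for $y(t):=\|\partial_x\Gamma(t)\|_2^2$, whose linear-in-$y$ coefficient and source both belong to $L_1(0,T)$. Integration then delivers the bound $\|\partial_x\Gamma(t)\|_2\le C_3(\e,T)$ on $\mathcal{J}\cap[0,T]$. The main obstacle is the cross-term $\int(\partial_\Gamma d)(\partial_x\Gamma)^2\partial_x^2\Gamma\,dx$ stemming from the $\Gamma$-dependence of $d$; I would tame it by an additional integration by parts (the boundary terms vanishing because $\partial_x\Gamma|_{0,L}=0$), rewriting it as $-\frac{1}{3}\int\partial_x(\partial_\Gamma d)(\partial_x\Gamma)^3\,dx$, and then estimating the resulting integrals via H\"older/Gagliardo--Nirenberg inequalities together with the $L_1(0,T)$-integrability of $\|\partial_x\Gamma\|_2^2$ furnished by the dissipation functional $\mathcal{D}$ in \eqref{b116} (which, through $h\ge\sqrt{\e}$ and $|\sigma'|\ge\sigma_0$, gives $\int_0^T\|\partial_x\Gamma(s)\|_2^2\,ds\le C(\e,T)$).
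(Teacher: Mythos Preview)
Your strategy of testing \eqref{b2} against $-\partial_x^2\Gamma$ is natural, and your first three ingredients are correct. The gap is precisely where you locate ``the main obstacle'': the cross-term $\int_0^L(\partial_\Gamma d)\,(\partial_x\Gamma)^2\,\partial_x^2\Gamma\,dx$ cannot be closed with the information you have. After your integration by parts, the leading contribution is $\int_0^L|\partial_x\Gamma|^4\,dx$ (the coefficient $\partial_\Gamma^2 d$ involves $\beta_1'''$, which is nonzero for generic $\sigma$). Gagliardo--Nirenberg gives $\|\partial_x\Gamma\|_4^4\le C\,\|\partial_x\Gamma\|_2^3\,\|\partial_x^2\Gamma\|_2$, and Young's inequality then produces $\delta\,\|\partial_x^2\Gamma\|_2^2+C_\delta\,y^3$ with $y:=\|\partial_x\Gamma\|_2^2$. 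The resulting differential inequality $y'\le K(t)\,y+C\,y^3+b(t)$ is genuinely superlinear, and the $L_1(0,T)$-bound on $y$ furnished by $\mathcal{D}$ does \emph{not} prevent finite-time blow-up of such an ODE. Equivalently, if you estimate the cubic term directly as $C\,y^{1/2}\|\partial_x^2\Gamma\|_2^2$, you can absorb it into the dissipation only under a smallness assumption on $y$ that is unavailable. (A minor point: your $q$ contains the factor $b_2(\Gamma)$, so $\partial_x q$ has an extra $\partial_x\Gamma$-term compared to the quantity bounded in the proof of Lemma~\ref{leb105}; this part is harmless, but the citation is not exact.)

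The paper avoids this obstruction by a change of unknown that removes the $\Gamma$-dependence of the diffusion coefficient altogether. Observe that $d\,\partial_x\Gamma=\bigl(D/\beta_1'(B)+\alpha_0(h,H)\bigr)\,\partial_x\beta_1(\Gamma)$, so setting $\gamma:=\partial_x\beta_1(\Gamma)$ one has $\partial_t\Gamma=\partial_x(\tilde d\,\gamma+q)$ with $\tilde d:=D/\beta_1'(B)+\alpha_0(h,H)$ depending only on $B,h,H$. Differentiating in $x$ and testing against $\gamma$ yields an identity in which the cross-terms involve $\partial_x\tilde d$, hence only $\partial_x B$, $\partial_x H$, $\partial_x h$, all of which are controlled in $L_\infty$ or $L_2(0,T;L_\infty)$; no $(\partial_x\Gamma)^2\partial_x^2\Gamma$-type term ever appears. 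One then obtains a linear Gr\"onwall inequality for $\|\gamma\|_2^2$ and recovers $\|\partial_x\Gamma\|_2$ from $|\partial_x\Gamma|\le|\gamma|/(\sigma_0\e)$. The algebraic identity $d\,\partial_x\Gamma=\tilde d\,\gamma$ is the missing idea.
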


\begin{proof}
Introducing $a_0(h)=h$ and $\gamma:=\partial_x \beta_1(\Gamma)$, it follows from \eqref{b8} and \eqref{b2} that 
$$
\partial_t\beta_1(\Gamma) - \beta_1'(\Gamma)\ \partial_x \left( G\ \frac{a_2(h) b_2(\Gamma) \sqrt{\alpha_0(h,H)}}{\sqrt{a_0(h) a_1(h)}}\ \partial_x A + \left( \frac{D}{\beta_1'(B)} + \alpha_0(h,H) \right)\ \gamma \right) =0\,.
$$
Differentiating with respect to $x$ we obtain
$$
\partial_t\gamma - \partial_x \left[ \beta_1'(\Gamma)\ \partial_x \left( G\ \frac{a_2(h) b_2(\Gamma) \sqrt{\alpha_0(h,H)}}{\sqrt{a_0(h) a_1(h)}}\ \partial_x A + \left( \frac{D}{\beta_1'(B)} + \alpha_0(h,H) \right)\ \gamma \right) \right]=0\,.
$$
Since $\gamma(t,x)=0$ for $(t,x)\in\mathcal{J}\times\{0,L\}$ by \eqref{b3}, we deduce from the above equation that
\begin{equation}
\frac{1}{2}\ \frac{d}{dt} \|\gamma\|_2^2 + \int_0^L \left( \frac{D}{\beta_1'(B)} + \alpha_0(h,H) \right)\ \beta_1'(\Gamma)\ \left| \partial_x \gamma \right|^2\ dx = \sum_{i=1}^5 Y_i\,, \label{b138} 
\end{equation} 
with
\begin{eqnarray*}
Y_1 & := & - \int_0^L \beta_1'(\Gamma)\ \partial_x\gamma \left[ - D\ \frac{\beta_1''(B)}{\beta_1'(B)^2}\ \partial_x B + \eta_1\ \partial_x H + (1-\eta_1)\ \partial_x h \right]\ \gamma\ dx\,, \\
Y_2 & := & - G\ \int_0^L \beta_1'(\Gamma)\ \partial_x\gamma\ \left( \frac{a_2}{\sqrt{a_0 a_1}}\right)'(h)\ \partial_x h\ b_2(\Gamma)\ \sqrt{\alpha_0(h,H)}\ \partial_x A\ dx\,, \\
Y_3 &:= & - G\ \int_0^L \beta_1'(\Gamma)\ \partial_x\gamma\ \frac{a_2(h)}{\sqrt{a_0(h) a_1(h)}}\ \frac{b_2'(\Gamma)}{\beta_1'(\Gamma)}\ \gamma\ \sqrt{\alpha_0(h,H)}\ \partial_x A\ dx\,, \\
Y_4 & := & - G\ \int_0^L \beta_1'(\Gamma)\ \partial_x\gamma\ \frac{a_2(h)}{\sqrt{a_0(h) a_1(h)}}\ b_2(\Gamma)\ \frac{\eta_1 \partial_x H + (1 - \eta_1)\ \partial_x h}{2\sqrt{\alpha_0(h,H)}}\ \partial_x A\ dx\,, \\
Y_5 & := & - G\ \int_0^L \beta_1'(\Gamma)\ \partial_x\gamma\ \frac{a_2(h)}{\sqrt{a_0(h) a_1(h)}}\ b_2(\Gamma)\ \sqrt{\alpha_0(h,H)}\ \partial_x^2 A\ dx\,.
\end{eqnarray*}

We now estimate each of the terms $Y_i$, $1\le i \le 5$, separately for $T>0$ and $t\in\mathcal{J}\cap [0,T]$. By \eqref{b102}, \eqref{b103b}, \eqref{b106}, \eqref{b108}, and \eqref{b128}, we have
\begin{eqnarray*}
|Y_1| & \le & \left\| \sqrt{\beta_1'(\Gamma)}\ \partial_x\gamma \right\|_2\ \|\beta_1'(\Gamma)\|_\infty^{1/2} \left[ D\ \left\| \frac{\beta_1''(B)}{\beta_1'(B)^2} \right\|_\infty\ \left\| \partial_x B \right\|_\infty + \left\| \partial_x H \right\|_\infty + \left\| \partial_x h \right\|_\infty \right]\ \|\gamma\|_2 \nonumber\\
& \le & C(\e,T)\ \left[ \frac{\|\Gamma\|_1}{\e^2} + \frac{\|h\|_1}{\e^2} + \left\| \partial_x h \right\|_\infty \right]\ \|\gamma\|_2\ \left\| \sqrt{\beta_1'(\Gamma)} \ \partial_x\gamma \right\|_2\ , \nonumber
\end{eqnarray*}
that is,
\begin{equation}
|Y_1|  \le  \frac{\e}{5}\ \left\| \sqrt{\beta_1'(\Gamma)} \ \partial_x\gamma \right\|_2^2 + C(\e,T)\ \left( 1 + \left\| \partial_x h \right\|_\infty^2 \right)\ \|\gamma\|_2^2\,. \label{b139}
\end{equation}
It next follows from \eqref{b5}, \eqref{b101}, \eqref{b103a}, \eqref{b104}, \eqref{b117}, and \eqref{b128} that 
\begin{eqnarray*}
|Y_2| & \le & G\ \left\| \sqrt{\beta_1'(\Gamma)}\ \partial_x\gamma \right\|_2\ \|(\sqrt{\beta_1'} b_2)(\Gamma)\|_\infty\ \left\| \left( \frac{a_2}{\sqrt{a_0 a_1}}\right)'(h) \right\|_\infty\ \left\| \partial_x h \right\|_\infty\ \|\alpha_0(h,H)\|_\infty^{1/2}\ \left\| \partial_x A \right\|_2 \nonumber \\
& \le & C(\e,T)\ \left\| \sqrt{\beta_1'(\Gamma)}\ \partial_x\gamma \right\|_2\ \left( \frac{3\e}{G} \right)^{1/2}\ \left\| \frac{h-\sqrt{\e}}{h^3}\right\|_\infty\ \left\| \partial_x h \right\|_\infty\ \|h\|_\infty^{1/2}\ \left\| \partial_x \alpha_1(h) \right\|_2\nonumber \\
& \le & C(\e,T)\ \left\| \partial_x h \right\|_\infty\ \left\| \sqrt{\beta_1'(\Gamma)}\ \partial_x\gamma \right\|_2\ , 
\nonumber 
\end{eqnarray*}
so that
\begin{equation}
|Y_2|  \le  \frac{\e}{5}\ \left\| \sqrt{\beta_1'(\Gamma)} \ \partial_x\gamma \right\|_2^2 + C(\e,T)\ \left\| \partial_x h \right\|_\infty^2 \,, \label{b140}
\end{equation}
while \eqref{b14}, \eqref{b101}, \eqref{b102}, \eqref{b103a}, \eqref{b105}, and \eqref{b117} ensure that 
\begin{eqnarray}
|Y_3| & \le & \sqrt{\eta G}\ \left\| \sqrt{\beta_1'(\Gamma)}\ \partial_x\gamma \right\|_2\ \left\| \frac{1}{\sqrt{\beta_1'(\Gamma)}} \right\|_\infty\ \|\gamma\|_2\ \|\alpha_0(h,H)\|_\infty^{1/2}\ \left\| \partial_x A \right\|_\infty \nonumber \\
& \le & C(\e,T)\ \|h\|_\infty^{1/2}\ \frac{\| \alpha_1(h) \|_1}{\e^2}\ \|\gamma\|_2\ \left\| \sqrt{\beta_1'(\Gamma)}\ \partial_x\gamma \right\|_2\ ,  \nonumber 
\end{eqnarray}
whence
\begin{equation}
|Y_3|  \le  \frac{\e}{5}\ \left\| \sqrt{\beta_1'(\Gamma)} \ \partial_x\gamma \right\|_2^2 + C(\e,T)\ \|\gamma\|_2^2 \,. \label{b141}
\end{equation}
Finally, owing to \eqref{b14}, \eqref{b101}, \eqref{b102}, \eqref{b103a}, \eqref{b103b}, \eqref{b104}, \eqref{b117}, and \eqref{b128}, we have
\begin{eqnarray}
|Y_4| & \le & \sqrt{\eta G}\ \left\| \sqrt{\beta_1'(\Gamma)}\ \partial_x\gamma \right\|_2\ \|(\sqrt{\beta_1'} b_2)(\Gamma)\|_\infty\ \frac{\left\| \partial_x H \right\|_\infty + \left\| \partial_x h \right\|_\infty}{\e^{1/4}}\ \left\| \partial_x A \right\|_2 \nonumber \\
& \le & C(\e,T)\ \left( \frac{\|h\|_1}{\e^2} + \left\| \partial_x h \right\|_\infty \right)\ \left\| \partial_x \alpha_1(h) \right\|_2\ \left\| \sqrt{\beta_1'(\Gamma)}\ \partial_x\gamma \right\|_2\ , \nonumber 
\end{eqnarray}
that is,
\begin{equation}
|Y_4|  \le  \frac{\e}{5}\ \left\| \sqrt{\beta_1'(\Gamma)} \ \partial_x\gamma \right\|_2^2 + C(\e,T)\ \left( 1 + \left\| \partial_x h \right\|_\infty^2 \right) \,, \label{b142}
\end{equation}
and
\begin{eqnarray}
|Y_5| & \le & \sqrt{\eta G}\ \left\| \sqrt{\beta_1'(\Gamma)}\ \partial_x\gamma \right\|_2\ \|(\sqrt{\beta_1'} b_2)(\Gamma)\|_\infty\ \|\alpha_0(h,H)\|_\infty^{1/2}\ \left\| \partial_x^2 A \right\|_2 \nonumber \\
& \le & C(\e,T)\ \|h\|_\infty^{1/2}\ \frac{\left\| \partial_x \alpha_1(h) \right\|_2} \e\ \left\| \sqrt{\beta_1'(\Gamma)}\ \partial_x\gamma \right\|_2\ , 
\end{eqnarray}
that is,
\begin{equation}
|Y_5|  \le  \frac{\e}{5}\ \left\| \sqrt{\beta_1'(\Gamma)} \ \partial_x\gamma \right\|_2^2 + C(\e,T)\,. \label{b143}
\end{equation}
Collecting \eqref{b139}-\eqref{b143}, we deduce from \eqref{b101}, \eqref{b103a}, and \eqref{b138} that 
\begin{eqnarray*}
& & \frac{1}{2}\ \frac{d}{dt} \|\gamma\|_2^2 + \sqrt{\e}\ \int_0^L \beta_1'(\Gamma)\ \left| \partial_x \gamma \right|^2\ dx \\
& \le & \frac{1}{2}\ \frac{d}{dt} \|\gamma\|_2^2 + \int_0^L \left( \frac{D}{\beta_1'(B)} + \alpha_0(h,H) \right)\ \beta_1'(\Gamma)\ \left| \partial_x \gamma \right|^2\ dx \\
& \le & \e\  \int_0^L \beta_1'(\Gamma)\ \left| \partial_x \gamma \right|^2\ dx + C(\e,T)\ \left( 1 + \left\| \partial_x h \right\|_\infty^2 \right)\ \left( 1 + \|\gamma\|_2^2 \right)\,.
\end{eqnarray*}
Thus, on $\mj\cap [0,T]$ we have
$$
\frac{1}{2}\ \frac{d}{dt} \|\gamma(t)\|_2^2 \le C(\e,T)\ \left( 1 + \left\| \partial_x h(t) \right\|_\infty^2 \right)\ \left( 1 + \|\gamma(t)\|_2^2 \right)\,.
$$
Thanks to \eqref{b118}, the above differential inequality entails that $\|\gamma(t)\|_2^2\le C(\e,T)$ for $t\in\mathcal{J}\cap [0,T]$. As 
$$
|\partial_x\Gamma| = \frac{|\gamma|}{|\sigma'(\Gamma)| \Gamma} \le \frac{|\gamma|}{\sigma_0 \e}
$$
by \eqref{a6} and \eqref{b101}, Lemma~\ref{leb106} follows.
\end{proof}

Gathering \eqref{b101}, \eqref{b117}, \eqref{b128}, and \eqref{b137}, we have thus established that, given $T>0$, there is $C(\e,T)$ such that
$$
\|h(t)\|_{W_2^1} + \|\Gamma(t)\|_{W_2^1} \le C(\e,T)\,, \quad t\in\mathcal{J}\cap [0,T]\,,
$$
from which we deduce that $\mathcal{J}=[0,\infty)$ according to \eqref{globex}. This completes the proof of Theorem~\ref{T1}.

\section{Existence of weak solutions}\label{sec:eows}

Pick $\e\in (0,1/2)$. By Theorem~\ref{T1}, Lemma~\ref{leb101}, and Lemma~\ref{leb103}, there is a unique global strong solution $(h_\e,\Gamma_\e)$ to \eqref{b1}-\eqref{b4} with initial conditions $(h_{0,\e},\Gamma_{0,\e})$ given by \eqref{b13} and satisfying
\begin{eqnarray}
& & h_\e(t,x) \ge \sqrt{\e}\,, \quad \Gamma_\e(t,x) \ge \e\,, \quad (t,x)\in Q_\infty\,, \label{c1} \\
& & \|h_\e(t)\|_1 = \|h_0\|_1+ L\sqrt{\e}\,, \quad \|\Gamma_\e(t)\|_1=\|\Gamma_0\|_1 + L\e\,, \quad t\ge 0\,, \label{c2}
\end{eqnarray}
and
\begin{equation}
\mathcal{L}_\e(t) + \int_0^t \mathcal{D}_\e(s)\ ds \le \mathcal{L}_\e(0)\,, \quad t\ge 0\,,\label{c3} 
\end{equation}
with
\begin{equation}
\mathcal{L}_\e(t) := \int_0^L \left[ \frac{G}{2}\ |h_\e(t,x)|^2 + \phi(\Gamma_\e(t,x)) \right]\ dx\,, \quad t\ge 0\,,\label{c4}
\end{equation}
\begin{eqnarray}
2\mathcal{D}_\e(t) & := &  G\ \|J_{f,\e}(t)\|_2^2 + \|J_{s,\e}(t)\|_2^2 + (\eta_1-\eta)\ \|\sqrt{H_\e(t)} \partial_x\sigma(\Gamma_\e(t))\|_2^2 \label{c5} \\
& & + (1-\eta_1)\ \|\sqrt{h_\e(t)} \partial_x\sigma(\Gamma_\e(t))\|_2^2 + (1-\eta) G\ \|\partial_x\alpha_1(h_\e(t))\|_2^2\nonumber \\
& & + 2D\ \int_0^L \frac{\left| \partial_x\sigma(\Gamma_\e(t)) \right|^2}{\beta_1'(B_\e(t))}\ dx\,, \nonumber
\end{eqnarray}
the functions $J_{f,\e}$, $J_{s,\e}$, $H_\e$, $A_\e$, $B_\e$, $\Sigma_\e$, and $\phi$ being defined in \eqref{b111}, \eqref{b112}, \eqref{b9}, \eqref{b10}, and \eqref{b115}, respectively. We first deduce from \eqref{c3} several estimates which provide us the compactness of $(h_\e,\Gamma_\e)$.

\subsection{Compactness}\label{sec:cp}

Observe that the definition \eqref{b115} of $\phi$ and the property \eqref{a6} of $\sigma$ imply that
\begin{eqnarray}
\sigma_0\ (r\ln{r} - r +1) & \le & \phi(r) \le \sigma_\infty\ (r\ln{r} - r +1)\,, \quad r\ge 0\,, \label{c6} \\
\phi(r) & \le & \max{\{ \phi(0) , \phi(r+1) \}} \le \sigma_\infty + \phi(r+1)\,, \quad r\ge 0\,. \label{c7}
\end{eqnarray}
An easy consequence of \eqref{c4} and \eqref{c7} is that 
\begin{eqnarray*}
\mathcal{L}_\e(0) & = & \int_0^L \left[ G\ \frac{(h_0(x)+\sqrt{\e})^2}{2} + \phi(\Gamma_0(x)+\e) \right]\ dx \\
& \le & \int_0^L \left[ G\ \frac{(h_0(x)+1)^2}{2} + \sigma_\infty + \phi(\Gamma_0(x)+\e +1) \right]\ dx \\
& \le & \int_0^L \left[ G\ \frac{(h_0(x)+1)^2}{2} + \sigma_\infty + \phi(\Gamma_0(x)+2) \right]\ dx \ ,
\end{eqnarray*}
\begin{equation}
\mathcal{L}_\e(0) \le C_4\,. \label{c8}
\end{equation}

This allows us to derive some uniform estimates with respect to $\e$.

\begin{lemma}\label{lec1}
Given $t\ge 0$, we have
\begin{eqnarray}
\|h_\e(t)\|_2 + \int_0^L \Gamma_\e(t,x)\ |\ln{\Gamma_\e(t,x)}|\ dx & \le & C_5\,, \label{c9} \\
\int_0^t \left( \| J_{f,\e}(s) \|_2^2 + \| J_{s,\e}(s) \|_2^2 \right)\ ds & \le & C_5\,, \label{c10} \\
\int_0^t \int_0^L \left[ \left| \partial_x\alpha_1(h_\e) \right|^2 + \left( \sqrt{\e} + h_\e+H_\e + \frac{1}{\beta_1'(B_\e)}  \right)\ \left| \partial_x\sigma(\Gamma_\e) \right|^2  \right]\ dxds & \le & C_5\,, \label{c11} \\
\int_0^t \left( \left\| h_\e^{5/2}(s) \right\|_{W_2^1}^2 + \|h_\e(s)\|_\infty^5 \right)\ ds & \le & C_5\ (1+t)\,. \label{c12}
\end{eqnarray}
\end{lemma}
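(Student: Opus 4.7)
The plan is to combine the energy inequality \eqref{c3} with the uniform initial bound \eqref{c8} to deduce $\mathcal{L}_\e(t) + \int_0^t \mathcal{D}_\e(s)\,ds \le C_4$ for every $t \ge 0$, and then to read off each estimate in turn from this single master inequality. Every summand in \eqref{c4}--\eqref{c5} is nonnegative, so each inherits the uniform bound $C_4$, and the work reduces to rewriting two of them in the forms required by the statement.

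First, the $L_2$-bound on $h_\e$ in \eqref{c9} is simply the quadratic contribution to $\mathcal{L}_\e$, giving $\|h_\e(t)\|_2^2 \le 2C_4/G$. For the entropy part of \eqref{c9}, I will use $\phi(r) \ge \sigma_0(r\ln r - r + 1)$ from \eqref{c6} together with the elementary inequality $r|\ln r| \le r\ln r + 2/e$ (valid for $r>0$ since $r\ln r \ge -1/e$) and the mass conservation \eqref{c2} to conclude $\int_0^L \Gamma_\e|\ln \Gamma_\e|\,dx \le C$. The estimate \eqref{c10} then comes directly from the first two summands of $2\mathcal{D}_\e$, while \eqref{c11} follows from the $\|\partial_x\alpha_1(h_\e)\|_2^2$ term and the three $|\partial_x\sigma(\Gamma_\e)|^2$ contributions in \eqref{c5}; the extra $\sqrt{\e}\,|\partial_x\sigma(\Gamma_\e)|^2$ summand in \eqref{c11} is absorbed by the $h_\e\,|\partial_x\sigma(\Gamma_\e)|^2$ term using the pointwise lower bound $\sqrt{\e}\le h_\e$ from \eqref{c1}.

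The main piece of work is \eqref{c12}. Here I will exploit the explicit formula $\alpha_1(r) = (2/5)\sqrt{G/3}\,r^{5/2}$ coming from \eqref{b5} and \eqref{b7}, which makes $\partial_x h_\e^{5/2}$ proportional to $\partial_x \alpha_1(h_\e)$, so $\int_0^t \|\partial_x h_\e^{5/2}(s)\|_2^2\,ds$ is already controlled via \eqref{c11}. To promote this into the $L_\infty^5$-bound on $h_\e$, I will apply the one-dimensional inequality $\|f\|_\infty \le L^{-1}\|f\|_1 + L^{1/2}\|\partial_x f\|_2$ with $f=h_\e^{5/2}$, combined with the interpolation $\|h_\e^{5/2}\|_1 \le \|h_\e\|_\infty^{3/2}\|h_\e\|_1 \le C\|h_\e\|_\infty^{3/2}$ obtained from \eqref{c2}. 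The resulting inequality, for $M:=\|h_\e\|_\infty^{5/2}$, has the form $M \le C\,M^{3/5} + L^{1/2}\|\partial_x h_\e^{5/2}\|_2$, and Young's inequality absorbs the subcritical power to give $\|h_\e\|_\infty^5 \le C(1+\|\partial_x h_\e^{5/2}\|_2^2)$; the $L_2$-part $\|h_\e^{5/2}\|_2^2 \le L\|h_\e\|_\infty^5$ then closes the argument after time integration. The step I expect to be the most delicate is precisely this Young/absorption step, since it is the only place where a nonlinear manipulation is required; everywhere else the estimates drop out of the energy inequality directly, and keeping the constants $\e$-uniform is guaranteed by \eqref{c8} and the conservation laws \eqref{c2}.
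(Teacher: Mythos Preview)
Your proposal is correct and follows essentially the same route as the paper's proof: both read off \eqref{c9}--\eqref{c11} directly from the energy inequality \eqref{c3}--\eqref{c5} combined with \eqref{c8} (using \eqref{c6}, \eqref{c2}, and the lower bound $h_\e\ge\sqrt{\e}$ for the entropy and the $\sqrt{\e}$ terms), and both derive \eqref{c12} by writing $\partial_x h_\e^{5/2}$ as a multiple of $\partial_x\alpha_1(h_\e)$ and then applying the one-dimensional embedding plus Young's inequality (the paper phrases this last step as ``argue as in the proof of \eqref{b126}'', which is exactly your absorption argument).
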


\begin{proof} Recalling that $r |\ln{r}| \le 1/e$ for $r\in [0,1]$, we deduce from \eqref{c6} that 
\begin{equation}
\sigma_0\ r |\ln{r}| \le \phi(r) + \sigma_0\ r +  \frac{\sigma_0}{e}\,, \quad r\ge 0\,. \label{c13}
\end{equation}
Owing to the nonnegativity of $\mathcal{D}_\e$, it follows from \eqref{c2}, \eqref{c3}, \eqref{c8}, and \eqref{c13} that 
\begin{eqnarray*}
\frac{G}{2}\ \|h_\e(t)\|_2^2 + \sigma_0\ \int_0^L \Gamma_\e(t,x)\ |\ln{\Gamma_\e(t,x)}|\ dx & \le & \mathcal{L}_\e(t) + \sigma_0\ \|\Gamma_\e(t)\|_1 +  \frac{\sigma_0 L}{e} \\
& \le & \mathcal{L}_\e(0) + \sigma_0\ \left( \|\Gamma_0\|_1 + L\e \right) +  \frac{\sigma_0 L}{e} \le C\,,
\end{eqnarray*}
and we obtain \eqref{c9}. Next, \eqref{c10} and \eqref{c11} are straightforward consequences of \eqref{c3}, \eqref{c5}, and \eqref{c8}, since the lower bound \eqref{c1} on $h_\e$ guarantees that $\sqrt{h_\e}\ge \e^{1/4}$. Recalling the definition \eqref{b7} of $\alpha_1$, we infer from \eqref{c11} that
\begin{eqnarray*}
C & \ge & \int_0^t \int_0^L |\alpha_1'(h_\e)|^2\ \left|\partial_x h_\e \right|^2\ dxds\, = \,\frac{G}{3}\ \int_0^t \int_0^L h_\e^3\ \left|\partial_x h_\e \right|^2\ dxds \\
& = & \frac{4G}{75}\ \int_0^t \int_0^L \left|\partial_x\left( h_\e^{5/2} \right) \right|^2\ dxds\,.
\end{eqnarray*}
We next argue as in the proof of \eqref{b126} to establish that
$$
\|h_\e\|_\infty^5 \le C\ \left( \|h_\e\|_1^5 + \left\|\partial_x\left( h_\e^{5/2} \right) \right\|_2^2 \right)\,,
$$
and \eqref{c12} follows from \eqref{c2} and the above two inequalities. 
\end{proof}

We now turn to the compactness properties of $(\Gamma_\e)_\e$ with respect to the space variable and first establish a preliminary result. Recall that $B_\e=\mn_\e (\Gamma_\e)$ with $\mn_\e$ defined in \eqref{b0}.

\begin{lemma}\label{lec2}
Given $t\ge 0$, we have
\begin{equation}
\int_0^L B_\e(t,x)\ |\ln{B_\e(t,x)}|\ dx \le C_6\,. \label{c14}
\end{equation}
\end{lemma}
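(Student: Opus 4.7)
The plan is to transfer the control of $\Gamma_\e|\ln\Gamma_\e|$ provided by \eqref{c9} to $B_\e|\ln B_\e|$, exploiting the fact that $B_\e=\mn_\e(\Gamma_\e)$ is a smoothing of $\Gamma_\e$ (cf. \eqref{b0}) together with the convexity of $r\mapsto r\ln r$. First I would reduce $|\ln B_\e|$ to $\ln B_\e$: since $r|\ln r|\le r\ln r + 2/e$ for every $r>0$ (the correction coming from the range $r\in(0,1)$ where $-r\ln r\le 1/e$), we have
\begin{equation*}
\int_0^L B_\e(t,x)\,|\ln B_\e(t,x)|\,dx \;\le\; \int_0^L B_\e(t,x)\,\ln B_\e(t,x)\,dx \;+\; \frac{2L}{e},
\end{equation*}
so it suffices to bound the right-hand integral uniformly in $\e$.

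The main step is a direct energy-type computation on the elliptic problem \eqref{b0} for $B_\e$. Since $B_\e(t,x)\ge\e>0$ by \eqref{b106}, the function $\ln B_\e$ is admissible as a test function. Multiplying the equation $B_\e-\e^2\partial_x^2 B_\e=\Gamma_\e$ by $\ln B_\e$, integrating over $(0,L)$, and performing one integration by parts (the boundary terms vanish by the homogeneous Neumann condition in \eqref{b0}), I obtain
\begin{equation*}
\int_0^L B_\e\,\ln B_\e\,dx \;+\; \e^2 \int_0^L \frac{|\partial_x B_\e|^2}{B_\e}\,dx \;=\; \int_0^L \Gamma_\e\,\ln B_\e\,dx,
\end{equation*}
and the second term on the left is nonnegative. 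The right-hand side is then estimated by the pointwise Young-type inequality $a\ln b\le a\ln a + (b-a)$ for $a,b>0$ (equivalent to $\ln r\ge 1-1/r$), applied with $a=\Gamma_\e(t,x)$ and $b=B_\e(t,x)$. Integrating and using that integration of \eqref{b0} together with the Neumann boundary condition yields $\int_0^L B_\e(t,x)\,dx=\int_0^L\Gamma_\e(t,x)\,dx$, the linear correction drops out and one is left with
\begin{equation*}
\int_0^L B_\e\,\ln B_\e\,dx \;\le\; \int_0^L \Gamma_\e\,\ln \Gamma_\e\,dx \;\le\; \int_0^L \Gamma_\e\,|\ln\Gamma_\e|\,dx.
\end{equation*}

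Finally, combining the two inequalities and invoking the uniform bound \eqref{c9} on $\int_0^L\Gamma_\e|\ln\Gamma_\e|\,dx$ yields the desired estimate with $C_6:=C_5+2L/e$. The only delicate point is ensuring that the integration by parts against $\ln B_\e$ is legitimate, but this is immediate because $B_\e\in W_2^2(0,L)$ with $B_\e\ge\e>0$ (so $\ln B_\e\in W_2^2$ as well) and $\partial_x B_\e$ vanishes at the endpoints; no further regularization is needed. I do not expect any real obstacle here—the computation is short, and the only conceptual ingredient is the Jensen/Young inequality $a\ln b\le a\ln a+b-a$ that allows one to trade $\ln B_\e$ for $\ln\Gamma_\e$ without losing $\e$-uniform control.
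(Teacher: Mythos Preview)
Your proof is correct and follows essentially the same route as the paper: test the equation $B_\e-\e^2\partial_x^2 B_\e=\Gamma_\e$ against $\ln B_\e$, drop the nonnegative gradient term, and apply the Young-type inequality $a\ln b\le a\ln a + b-a$ (which is exactly the Fenchel inequality $sj'(r)\le j(s)+j^*(j'(r))$ for $j(r)=r\ln r-r$ used in the paper). The only cosmetic difference is that you observe the exact mass identity $\int_0^L B_\e\,dx=\int_0^L\Gamma_\e\,dx$, whereas the paper invokes the weaker contraction bound $\|B_\e\|_1\le\|\Gamma_\e\|_1$ from \eqref{b106}; either suffices.
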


\begin{proof}
Let $j$ be the convex function defined by $j(r):=r\ln{r}-r$ for $r\ge 0$ with conjugate function $j^*(r):=e^r$, $r\ge 0$. Then
\begin{equation}
r s \le j(r) + j^*(s) \;\;\mbox{ and }\;\; j(r) \le r j'(r)\,, \quad (r,s)\in [0,\infty)^2\,. \label{c15}
\end{equation} 
We infer from the definition of $B_\e$, the convexity of $j$, and \eqref{c15} that 
\begin{eqnarray*}
\int_0^L j(B_\e)\ dx & \le & \int_0^L B_\e\ j'(B_\e)\ dx \le \int_0^L \left( B_\e\ j'(B_\e) + \e^2\ j''(B_\e)\ \left| \partial_x B_\e \right|^2 \right)\ dx \\
& = & \int_0^L j'(B_\e)\ \left( B_\e - \e^2\ \partial_x^2 B_\e \right)\ dx = \int_0^L j'(B_\e)\ \Gamma_\e\ dx \le \int_0^L \left( j(\Gamma_\e) + j^*(j'(B_\e)) \right)\ dx \\
& = & \int_0^L \left( \Gamma_\e\ \ln{\Gamma_\e} - \Gamma_\e + B_\e \right)\ dx\,.
\end{eqnarray*}
Since $\Gamma_\e$ and $B_\e$ are nonnegative and $\|B_\e\|_1 \le \|\Gamma_\e\|_1$ by \eqref{b106} and \eqref{c1}, we end up with
$$
\int_0^L j(B_\e)\ dx \le \int_0^L \Gamma_\e\ \ln{\Gamma_\e} \ dx \le \int_0^L \Gamma_\e\ |\ln{\Gamma_\e}|\ dx\,,
$$
from which we deduce \eqref{c14} with the help of \eqref{c2}, \eqref{c9}, and the elementary inequality $r |\ln{r}| \le r\ln{r} +2/e$, $r\ge 0$.
\end{proof}

We next define the function $\omega_\ell\in\mathcal{C}([0,L])$ by $\omega_\ell(0)=0$ and
\begin{equation}
\omega_\ell (L\delta) := \left[ \ln{(\varrho(\delta))} \right]^{-1/2}\,, \quad \delta\in (0,1]\,, \label{c16}
\end{equation}
where $\varrho(\delta)>1$ denotes the unique solution to
\begin{equation}
\varrho(\delta)\ \ln{(\varrho(\delta))} = \frac{1}{\delta} \;\;\mbox{ for }\;\; \delta\in (0,1]\,. \label{c17}
\end{equation}
Introducing the subset $\mathcal{X}$ of $\mathcal{C}([0,L])$ by
\begin{equation}
\mathcal{X} := \left\{ f\in \mathcal{C}([0,L])\ : \ [f ]_{\mathcal{X}} := \sup_{x\ne y} \frac{|f(x)-f(y)|}{\omega_\ell(|x-y|)} < \infty \right\}\,, \label{c18}
\end{equation}
and noticing that it is a Banach space equipped with the norm
$$
f\mapsto \|f\|_{\mathcal{X}}:=\|f\|_\infty +[ f]_{\mathcal{X}}\, ,
$$
we have the following result:

\begin{lemma}\label{lec3}
Given $t\ge 0$, we have
\begin{equation}
\int_0^t \left( \left\| \partial_x\sigma(\Gamma_\e(s)) \right\|_1^2 + \|\sigma(\Gamma_\e(s))\|_{\mathcal{X}}^2 \right)\ ds \le C_7\,. \label{c19}
\end{equation}
In addition, letting $Q_T:= (0,T)\times (0,L)$ for $T>0$, the family
\begin{equation}
\left( \partial_x\sigma(\Gamma_\e) \right)_\e \;\;\mbox{ is relatively weakly sequentially compact in }\;\; L_1(Q_T)\, \label{c20}
\end{equation}
 and
\begin{equation}
\int_0^T  \|\Gamma_\e(s)\|_{\mathcal{X}}^2\ ds \le C_8(T)\,. \label{c21}
\end{equation}
\end{lemma}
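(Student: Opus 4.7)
The plan is to exploit the key estimate \eqref{c11}, which controls $\int_0^L |\partial_x\sigma(\Gamma_\e)|^2/\beta_1'(B_\e)\,dx$, via Cauchy--Schwarz against $\beta_1'(B_\e)$. Since $\beta_1'(r)=r|\sigma'(r)|\le \sigma_\infty r$ by \eqref{a6} and \eqref{b8}, and $\|B_\e(t)\|_1\le\|\Gamma_\e(t)\|_1\le C$ by \eqref{b106} and \eqref{c2}, Cauchy--Schwarz over $(0,L)$ gives
\begin{equation*}
\|\partial_x\sigma(\Gamma_\e)\|_1^2\le \|\beta_1'(B_\e)\|_1\int_0^L\frac{|\partial_x\sigma(\Gamma_\e)|^2}{\beta_1'(B_\e)}\,dx\le C\int_0^L\frac{|\partial_x\sigma(\Gamma_\e)|^2}{\beta_1'(B_\e)}\,dx,
\end{equation*}
and the first summand in \eqref{c19} follows by integration in time via \eqref{c11}.

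For the $\mathcal{X}$-seminorm bound, the core step is a quantitative equi-integrability estimate for $B_\e$ on subintervals, drawn from the entropy bound \eqref{c14}. The elementary inequality $r\le a + r|\ln r|/\ln a$ valid for $r\ge 0$ and $a>1$ yields, for any measurable $A\subset(0,L)$,
\begin{equation*}
\int_A B_\e\,dx \le a|A| + \frac{C_6}{\ln a}.
\end{equation*}
Writing $|A|=L\delta$ and picking the distinguished value $a=\varrho(\delta)$ so that $a\delta=1/\ln a$ thanks to \eqref{c17}, the bound becomes $(L+C_6)/\ln\varrho(\delta)=(L+C_6)\,\omega_\ell(|A|)^2$ by \eqref{c16}; this is precisely why $\omega_\ell$ is defined through the transcendental equation \eqref{c17} and will be the main technical hurdle. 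Applying Cauchy--Schwarz on $A=[\min\{x,y\},\max\{x,y\}]$ then provides
\begin{equation*}
|\sigma(\Gamma_\e)(x)-\sigma(\Gamma_\e)(y)|^2 \le \sigma_\infty\int_A B_\e\,dz\,\int_0^L\frac{|\partial_x\sigma(\Gamma_\e)|^2}{\beta_1'(B_\e)}\,dz \le C\,\omega_\ell(|x-y|)^2\int_0^L\frac{|\partial_x\sigma(\Gamma_\e)|^2}{\beta_1'(B_\e)}\,dz,
\end{equation*}
so $[\sigma(\Gamma_\e(s))]_\mathcal{X}^2$ is dominated pointwise in $s$ by a time-integrable quantity by \eqref{c11}. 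The $L_\infty$ part of the $\mathcal{X}$-norm comes from the embedding $\|f\|_\infty\le L^{-1}\|f\|_1+\|\partial_xf\|_1$, since $\|\sigma(\Gamma_\e(s))\|_1\le L\sigma(0)+\sigma_\infty\|\Gamma_0\|_1+\sigma_\infty L\e$ by \eqref{a6b} and \eqref{c2}, and the $L_1$-norm of $\partial_x\sigma(\Gamma_\e)$ was just controlled.

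For the weak $L_1$-compactness \eqref{c20} I would invoke the Dunford--Pettis theorem. Boundedness in $L_1(Q_T)$ is already at hand, and equi-integrability proceeds exactly as above: given a measurable $E\subset Q_T$, Cauchy--Schwarz and \eqref{c11} yield
\begin{equation*}
\int_E|\partial_x\sigma(\Gamma_\e)|\,dxds \le \sigma_\infty^{1/2}\left(\int_E B_\e\,dxds\right)^{1/2}\left(\int_{Q_T}\frac{|\partial_x\sigma(\Gamma_\e)|^2}{\beta_1'(B_\e)}\,dxds\right)^{1/2}\le C_T\left(\int_E B_\e\,dxds\right)^{1/2},
\end{equation*}
and Lemma~\ref{lec2} together with de la Vall\'ee-Poussin ensures that $(B_\e)_\e$ is equi-integrable on $Q_T$, so the right-hand side tends to $0$ uniformly in $\e$ as $|E|\to 0$. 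Finally, \eqref{c21} is an immediate consequence of \eqref{a6}: since $-\sigma'\ge\sigma_0>0$, the function $\sigma$ is a bi-Lipschitz bijection onto its range with $\sigma^{-1}$ being $\sigma_0^{-1}$-Lipschitz, hence $\|\Gamma_\e(s)\|_\mathcal{X}\le \sigma_0^{-1}(\sigma(0)+\|\sigma(\Gamma_\e(s))\|_\mathcal{X})$ for each $s$, and squaring and integrating over $(0,T)$ transfers the previously established bound to $\Gamma_\e$.
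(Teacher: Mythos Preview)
Your proof is correct and follows essentially the same approach as the paper's: Cauchy--Schwarz against $\beta_1'(B_\e)$, the entropy bound \eqref{c14} to control $\int_A B_\e$ on small sets (your pointwise inequality $r\le a + r|\ln r|/\ln a$ is equivalent to the paper's cutoff at level $R$), the choice $a=\varrho(\delta)$ to produce $\omega_\ell$, and the bi-Lipschitz property of $\sigma$ to transfer the bound to $\Gamma_\e$. The only cosmetic difference is that the paper carries out the equi-integrability of $\partial_x\sigma(\Gamma_\e)$ explicitly via the same cutoff, whereas you package it as ``$(B_\e)_\e$ is equi-integrable by de la Vall\'ee-Poussin''; both are the same estimate.
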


\begin{proof}
For $t\ge 0$ and $0\le y < x \le L$, we note that
\begin{eqnarray}
|\sigma(\Gamma_\e(t,x))-\sigma(\Gamma_\e(t,y)| & \le & \int_y^x \left| \partial_x \sigma(\Gamma_\e(t,z)) \right|\ dz \nonumber \\
& \le & \left( \int_0^L \frac{\left| \partial_x\sigma(\Gamma_\e(t,z)) \right|^2}{\beta_1'(B_\e(t,z))}\ dz \right)^{1/2}\ \left( \int_y^x \beta_1'(B_\e(t,z))\ dz \right)^{1/2}\,. \label{c22}
\end{eqnarray}
We infer from \eqref{a6}, {\eqref{b8},} and \eqref{c14} that, for $R>1$, 
\begin{eqnarray*}
\int_y^x \beta_1'(B_\e(t,z))\ dz & \le & \sigma_\infty\ \int_y^x B_\e(t,z)\ dz \le \sigma_\infty\ \int_y^x \left( \mathbf{1}_{[0,R]}(B_\e(t,z)) + \mathbf{1}_{(R,\infty)}(B_\e(t,z)) \right)\ B_\e(t,z)\ dz \\
& \le & \sigma_\infty R\ |x-y| + \frac{\sigma_\infty}{\ln{R}}\ \int_0^L \mathbf{1}_{(R,\infty)}(B_\e(t,z))\ B_\e(t,z)\ |\ln{B_\e(t,z)}|\ dz \\
& \le & \sigma_\infty R\ |x-y| + \frac{\sigma_\infty C_6}{\ln{R}} \,.
\end{eqnarray*}
Choosing $R=\varrho(|x-y|/L)$ and using \eqref{c17}, we conclude that
$$
\int_y^x \beta_1'(B_\e(t,z))\ dz  \le \frac{C}{\ln{\varrho(|x-y|/L)}} = C\ \omega_\ell(|x-y|)^2\,.
$$
Recalling \eqref{c22}, we have shown that 
$$
|\sigma(\Gamma_\e(t,x))-\sigma(\Gamma_\e(t,y)| \le C\ \omega_\ell(|x-y|)\  \left( \int_0^L \frac{\left| \partial_x\sigma(\Gamma_\e(t,z)) \right|^2}{\beta_1'(B_\e(t,z))}\ dz \right)^{1/2}\,.
$$
Consequently,
$$
[\sigma(\Gamma_\e(t))]_{\mathcal{X}}^2 \le C\ \left( \int_0^L \frac{\left| \partial_x\sigma(\Gamma_\e(t,z)) \right|^2}{\beta_1'(B_\e(t,z))}\ dz \right)\,.
$$
Integrating the above inequality with respect to time and using \eqref{c11} give
\begin{equation}
\int_0^t [\sigma(\Gamma_\e(s))]_{\mathcal{X}}^2\ ds \le C_9\,. \label{c23}
\end{equation}

It also follows from \eqref{a6}, \eqref{b106}, \eqref{c2}, and \eqref{c11} that, for $T>0$, 
\begin{eqnarray*}
\int_0^T \left\| \partial_x\sigma(\Gamma_\e(t)) \right\|_1^2\ dt & \le & \int_0^T \left( \int_0^L \frac{\left| \partial_x\sigma(\Gamma_\e(t,z)) \right|^2}{\beta_1'(B_\e(t,z))}\ dz \right)\ \left( \int_0^L \beta_1'(B_\e(t,z))\ dz \right)\ dt \\
& \le & \sigma_\infty C_5\ \sup_{t\in [0,T]}{\left\{ \|B_\e(t)\|_1 \right\}} \\
& \le & \sigma_\infty C_5\ \sup_{t\in [0,T]}{\left\{ \|\Gamma_\e(t)\|_1 \right\}} \le C\,,
\end{eqnarray*}
which, together with \eqref{a6b}, \eqref{c2}, \eqref{c23}, Poincar\'e's inequality, and the embedding of $W_1^1(0,L)$ in $L_\infty (0,L)$ completes the proof of \eqref{c19}. 

Consider next $T>0$ and a measurable subset $E$ of $Q_T$ with finite measure. Arguing as above, we deduce from \eqref{a6} and \eqref{c11} that, for $R>1$,  
\begin{eqnarray*}
\int_E \left| \partial_x\sigma(\Gamma_\e) \right|\ dxdt & \le & C\ \left( \int_E B_\e\ dxdt \right)^{1/2} \\
& \le & C\ \left[ \int_E \left( \mathbf{1}_{[0,R]}(B_\e) + \mathbf{1}_{(R,\infty)}(B_\e) \right)\ B_\e\ dxdt \right]^{1/2} \\
& \le & C\ \left[ R\ |E| + \frac{1}{\ln{R}}\ \int_{Q_T} \mathbf{1}_{(R,\infty)}(B_\e)\ B_\e\ |\ln{B_\e}|\ dxdt \right]^{1/2} \,.
\end{eqnarray*}
Owing to \eqref{c14}, we conclude that 
$$
\int_E \left| \partial_x\sigma(\Gamma_\e) \right|\ dxdt \le C\ \left( R\ |E| + \frac{1}{\ln{R}} \right)^{1/2}\,,
$$
and thus
$$
\limsup_{\delta\to 0} \sup_{\e,|E|\le\delta}{\left\{ \int_E \left| \partial_x\sigma(\Gamma_\e) \right|\ dxdt \right\}} \le \left( \frac{C}{\ln{R}} \right)^{1/2}\,.
$$
Letting $R\to\infty$ entails \eqref{c20} by the Dunford-Pettis theorem. 

Finally, by \eqref{a6}, we have
$$
\sigma_0\ |r-s| = \sigma_0\ (r-s) \le \sigma(s)-\sigma(r) = |\sigma(r)-\sigma(s)|\,, \quad r\ge s\ge 0\,,
$$
so that 
$$
[\Gamma_\e(t)]_{\mathcal{X}} \le \frac{ [\sigma(\Gamma_\e(t))]_{\mathcal{X}}}{\sigma_0} \;\;\mbox{ and }\;\; \|\Gamma_\e(t)\|_\infty \le \frac{\sigma(0) + \|\sigma(\Gamma_\e(t))\|_\infty}{\sigma_0}\,, \quad t\ge 0\,,
$$
and \eqref{c21} follows at once from \eqref{c19}.
\end{proof}

The next result deals with the time compactness of $(h_\e)$ and $(\Gamma_\e)$. 

\begin{lemma}\label{lec4}
Let $T>0$. Then
\begin{eqnarray}
& & \left( \partial_t h_\e \right)_\e \;\;\mbox{ is bounded in }\;\; L_{5/4}(0,T;W_2^1(0,L)')\,, \label{c23bbb}\\
& & \left( \partial_t \Gamma_\e \right)_\e \;\;\mbox{ is bounded in }\;\; L_{18/17}(0,T;W_{18/17}^1(0,L)')\,. \label{c24}
\end{eqnarray}
\end{lemma}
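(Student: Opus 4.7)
The plan is to realize each time derivative as the distributional derivative of a flux, using \eqref{b1} and \eqref{b2}, and then estimate the flux pointwise in time in a suitable spatial $L^q$-norm via H\"older's inequality, combining the bounds collected in Lemmas \ref{lec1}--\ref{lec3}. In both cases the weak formulation, obtained by testing with $\psi$ and integrating by parts once (the boundary terms vanishing by the Neumann conditions \eqref{b3}), reads $\langle \partial_t u_\e,\psi\rangle = -\int_0^L \Phi_{u,\e}\,\partial_x\psi\,dx$, and the dual norm is controlled by a spatial norm of the flux via H\"older.

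For \eqref{c23bbb}, the identity \eqref{b111} gives $\partial_t h_\e = -\partial_x\bigl(\sqrt{a_1(h_\e)}\, J_{f,\e}\bigr)$ with $\sqrt{a_1(h_\e)} = \sqrt{G/3}\,h_\e^{3/2}$. Pointwise in time, $\|\sqrt{a_1(h_\e)}\,J_{f,\e}\|_2 \le \sqrt{G/3}\,\|h_\e\|_\infty^{3/2}\|J_{f,\e}\|_2$. Since $\|h_\e\|_\infty^{3/2}\in L_{10/3}(0,T)$ by \eqref{c12} and $\|J_{f,\e}\|_2\in L_2(0,T)$ by \eqref{c10}, H\"older with $1/p = 3/10 + 1/2 = 4/5$ places $\sqrt{a_1(h_\e)}\,J_{f,\e}$ in $L_{5/4}(0,T; L_2(0,L))$, which yields \eqref{c23bbb}.

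For \eqref{c24} I first use \eqref{b112} and the identity $\Gamma_\e\partial_x\sigma(\Gamma_\e) = -\partial_x\beta_1(\Gamma_\e)$ to rewrite the flux in \eqref{b2} as
$$\Phi_{\Gamma,\e} = -\Gamma_\e\sqrt{\alpha_0(h_\e,H_\e)}\,J_{s,\e} + D\,\frac{\partial_x\beta_1(\Gamma_\e)}{\beta_1'(B_\e)}\,,$$
then estimate each piece spatially in $L_{18/17}$. For the first piece, since $17/18 = 4/9 + 1/2$, H\"older gives $\|\Gamma_\e\sqrt{\alpha_0}J_{s,\e}\|_{18/17}\le \|\Gamma_\e\sqrt{\alpha_0}\|_{9/4}\|J_{s,\e}\|_2$; combined with $\|\Gamma_\e\|_{9/4}\in L_{18/5}(0,T)$ (interpolating $L_\infty(0,T;L_1)$ against $L_2(0,T;L_\infty)$ from Lemmas~\ref{lec1} and \ref{lec3}) and $\|\sqrt{\alpha_0}\|_\infty\le C\|h_\e\|_\infty^{1/2}\in L_{10}(0,T)$ (from \eqref{c12}), H\"older in time produces the integrability $L_{18/17}(0,T)$ with slack, since $5/18 + 1/10 + 1/2 = 79/90 < 17/18$. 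For the second piece, I factor as $D\,(\partial_x\beta_1(\Gamma_\e)/\sqrt{\beta_1'(B_\e)})/\sqrt{\beta_1'(B_\e)}$ and exploit the dissipation bound $\|\partial_x\sigma(\Gamma_\e)/\sqrt{\beta_1'(B_\e)}\|_2\in L_2(0,T)$ from \eqref{c11}, together with a uniform-in-$\e$ control of the weight deduced from $\beta_1'(B_\e)\ge \sigma_0 B_\e$ and the uniform identity
$$\int_0^L \frac{\Gamma_\e}{B_\e}\,dx = L - \e^2\int_0^L \left|\frac{\partial_x B_\e}{B_\e}\right|^2 dx \le L\,,$$
which I derive by testing the elliptic equation \eqref{b0} defining $B_\e$ against $1/B_\e$ and integrating by parts using the Neumann boundary conditions. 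Cauchy--Schwarz then yields $\|\Gamma_\e/\sqrt{B_\e}\|_2^2 \le \|\Gamma_\e\|_\infty \int_0^L \Gamma_\e/B_\e\,dx \le L\|\Gamma_\e\|_\infty$, whence $\|\Gamma_\e/\sqrt{\beta_1'(B_\e)}\|_2\in L_4(0,T)$, and H\"older closes the estimate for this piece.

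The main obstacle I anticipate is the diffusion term $D\,\partial_x\beta_1(\Gamma_\e)/\beta_1'(B_\e)$: the pointwise ratio $\beta_1'(\Gamma_\e)/\beta_1'(B_\e)$ can be large wherever $\Gamma_\e$ peaks above $B_\e$, and only the clever test-function identity for $B_\e$ described above supplies the $\e$-uniform control of $\Gamma_\e/B_\e$ necessary to close the estimate without picking up a negative power of $\e$.
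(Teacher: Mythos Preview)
Your proof of \eqref{c23bbb} is identical to the paper's. For \eqref{c24} your decomposition of the flux coincides with the paper's \eqref{c24b}, and your treatment of the transport term $\Gamma_\e\sqrt{\alpha_0}\,J_{s,\e}$ is the same in spirit (H\"older in space and time), with slightly different exponents.

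The genuine difference is in the diffusion term $D\,\Gamma_\e\,\partial_x\sigma(\Gamma_\e)/\beta_1'(B_\e)$. The paper controls $\Gamma_\e/\sqrt{\beta_1'(B_\e)}$ in $L_{9/4}(Q_T)$ by writing $\Gamma_\e=B_\e-\e^2\partial_x^2 B_\e$, using $B_\e\ge\e$ to get
\[
\left|\frac{\Gamma_\e}{\sqrt{\beta_1'(B_\e)}}\right|\le \sqrt{\frac{B_\e}{\sigma_0}}+\frac{\e^{3/2}}{\sqrt{\sigma_0}}\,|\partial_x^2 B_\e|\,,
\]
and then estimating $\|\e^{3/2}\partial_x^2 B_\e\|_{9/4}$ via \eqref{b107} and interpolation. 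Your route is more elegant: testing the resolvent equation for $B_\e$ against $1/B_\e$ gives the clean identity $\int_0^L \Gamma_\e/B_\e\,dx\le L$, whence $\|\Gamma_\e/\sqrt{B_\e}\|_2^2\le L\,\|\Gamma_\e\|_\infty$ and $\Gamma_\e/\sqrt{\beta_1'(B_\e)}\in L_4(0,T;L_2(0,L))$.

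There is, however, a small gap relative to the \emph{stated} conclusion. Your identity yields only an $L_2$ spatial bound on $\Gamma_\e/\sqrt{\beta_1'(B_\e)}$, so after Cauchy--Schwarz in space the diffusion flux lies merely in $L_{4/3}(0,T;L_1(0,L))$, which places $\partial_t\Gamma_\e$ in $L_{4/3}(0,T;W_\infty^1(0,L)')$ --- a larger dual space than in \eqref{c24}. To reach the $L_{18/17}$ spatial integrability the paper obtains, one needs the weight in $L_{9/4}$ spatially, and your identity does not deliver this (there is no uniform control of $\int \Gamma_\e^{9/4} B_\e^{-9/8}\,dx$). For the downstream application --- the Aubin--Simon compactness in Lemma~\ref{lec5} --- your weaker bound is equally sufficient, since any $L_r(0,T;Y)$ estimate with $r>1$ and $\mathcal{C}([0,L])\hookrightarrow Y$ will do; but as a proof of \eqref{c24} exactly as written, the spatial exponent falls short.
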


\begin{proof}
By \eqref{b1} and \eqref{b111}, we have
$$
\partial_t h_\e = \partial_x\left( \sqrt{a_1(h_\e)}\ J_{f,\e} \right)\,.
$$ 
As $(J_{f,\e})_\e$ is bounded in $L_2(Q_T)$ by \eqref{c10} and $(\sqrt{a_1(h_\e)})_\e$ is bounded in $L_{10/3}(0,T;L_\infty(0,L))$ by \eqref{c12}, the family $\left( \sqrt{a_1(h_\e)}\ J_{f,\e} \right)_\e$ is bounded in $L_{5/4}(0,T;L_2(0,L))$ and \eqref{c23bbb} readily follows from this property.

Next, owing to \eqref{b8} and \eqref{b112}, equation \eqref{b2} also reads
\begin{equation}
\partial_t \Gamma_\e = \partial_x \left( - \sqrt{\alpha_0(h_\e,H_\e)}\ \Gamma_\e\ J_{s,\e} - D\ \frac{\partial_x \sigma(\Gamma_\e)}{\sqrt{\beta_1'(B_\e)}}\ \frac{\Gamma_\e}{\sqrt{\beta_1'(B_\e)}} \right)\,. \label{c24b}
\end{equation} 
On the one hand, it follows from H\"older's inequality, \eqref{b103a}, \eqref{c2}, and \eqref{c9} that
\begin{eqnarray*}
\int_0^T \left\| \sqrt{\alpha_0(h_\e,H_\e)}\ \Gamma_\e\ J_{s,\e} \right\|_{8/7}^{16/15}\ dt & \le & \int_0^T \left\| \alpha_0(h_\e,H_\e) \right\|_2^{8/15}\ \|\Gamma_\e\|_8^{16/15}\  \|J_{s,\e}\|_2^{16/15}\ dt \\
& \le & \int_0^T \| h_\e\|_2^{8/15}\ \|\Gamma_\e\|_\infty^{14/15}\ \|\Gamma_\e\|_1^{2/15}\ \|J_{s,\e}\|_2^{16/15}\ dt \\
& \le & C\ \left( \int_0^T \|\Gamma_\e\|_\infty^2\ dt \right)^{7/15}\ \left( \int_0^T \|J_{s,\e}\|_2^2\ dt \right)^{8/15} \,.
\end{eqnarray*}
We then deduce from \eqref{c10} and \eqref{c21} that 
\begin{equation}
\left( \sqrt{\alpha_0(h_\e,H_\e)}\ \Gamma_\e\ J_{s,\e} \right)_\e \;\;\mbox{ is bounded in }\;\; L_{16/15}(0,T;L_{8/7}(0,L))\,. \label{c25}
\end{equation}
On the other hand, it follows from \eqref{a6}, \eqref{b9}, and \eqref{b106} that
\begin{equation}
\left| \frac{\Gamma_\e}{\sqrt{\beta_1'(B_\e)}} \right| \le \left| \frac{\Gamma_\e}{\sqrt{\sigma_0\ B_\e}} \right| \le \frac{B_\e + \e^2\ \left| \partial_x^2 B_\e \right|}{\sqrt{\sigma_0\ B_\e}} \le \sqrt{\frac{B_\e}{\sigma_0}} + \frac{\e^{3/2}}{\sqrt{\sigma_0}}\ \left| \partial_x^2 B_\e \right|\,.\label{c26}
\end{equation}
Since
$$
\left\| \e^2 \partial_x^2 B_\e \right\|_3 \le \|\Gamma_\e\|_3 + \|B_\e\|_3 \le 2\ \|\Gamma_\e\|_3
$$
by \eqref{b9} and \eqref{b106}, we deduce from \eqref{b107}, \eqref{c2}, and H\"older's inequality that 
\begin{eqnarray*}
\left\| \e^{3/2} \partial_x^2 B_\e \right\|_{9/4}^{9/4} & \le & \e^{27/8}\ \left\| \partial_x^2 B_\e \right\|_2^{3/2}\ \left\| \partial_x^2 B_\e \right\|_3^{3/4} \le \e^{3/8}\ \left( \frac{\left\| \partial_x \Gamma_\e \right\|_2^2}{2} \right)^{3/4}\ \left( 2\ \|\Gamma_\e\|_3 \right)^{3/4} \\
& \le & C\ \left( \e^{1/2}\ \left\| \partial_x \Gamma_\e \right\|_2^2 \right)^{3/4}\ \|\Gamma_\e\|_\infty^{1/2}\ \|\Gamma_\e\|_1^{1/4} \\
& \le & C\ \left( \int_0^L \e^{1/2}\ \left| \partial_x \Gamma_\e \right|^2\ dx + \|\Gamma_\e\|_\infty^2 \right)\,.
\end{eqnarray*}
Thanks to \eqref{c11} and \eqref{c21}, the above inequality implies that 
$$
\int_0^T \left\| \e^{3/2} \partial_x^2 B_\e \right\|_{9/4}^{9/4}\ dt \le C(T)\,.
$$
As 
$$
 \|\sqrt{B_\e}\|_{9/4}^{9/4} = \|B_\e\|_{9/8}^{9/8}\le \|\Gamma_\e\|_{9/8}^{9/8} \le \|\Gamma_\e\|_\infty^{1/8}\ \|\Gamma_\e\|_1 \le C\ \|\Gamma_\e\|_\infty^{1/8}
$$
by \eqref{b106} and \eqref{c2}, we infer from \eqref{c21} that $(\sqrt{B_\e})_\e$ is bounded in $L_{9/4}(Q_T)$ and conclude that the right-hand side of \eqref{c26} is bounded in $L_{9/4}(Q_T)$. Consequently,
\begin{equation}
\left( \frac{\Gamma_\e}{\sqrt{\beta_1'(B_\e)}} \right)_\e \;\;\mbox{ is bounded in }\;\; L_{9/4}(Q_T)\,. \label{c27}
\end{equation}
Recalling that $\left( \partial_x\sigma(\Gamma_\e)/\sqrt{\beta_1'(B_\e)} \right)_\e$ is bounded in $L_2(Q_T)$ by \eqref{c11}, we end up with 
\begin{equation}
\left( \frac{\partial_x \sigma(\Gamma_\e)\ \Gamma_\e}{\beta_1'(B_\e)} \right)_\e \;\;\mbox{ is bounded in }\;\; L_{18/17}(Q_T)\,. \label{c28}
\end{equation}
The claim \eqref{c24} is now a straightforward consequence of \eqref{c24b}, \eqref{c25}, and \eqref{c28}. 
\end{proof}

Thanks to the previous analysis, we have the following compactness properties on the families $(h_\e)_\e$ and $(\Gamma_\e)_\e$.

\begin{lemma}\label{lec5}
For each $T>0$ and $\vartheta\in [0,1/5)$, 
\begin{eqnarray}
(h_\e)_\e \;\;\mbox{ is relatively compact in }\;\; L_5(0,T;\mathcal{C}^\vartheta([0,L]))\,, \label{c29} \\
(\Gamma_\e)_\e \;\;\mbox{ is relatively compact in }\;\; L_2(0,T;\mathcal{C}([0,L]))\,. \label{c30}
\end{eqnarray}
\end{lemma}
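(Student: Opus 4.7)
The plan is to apply an Aubin--Simon type compactness criterion to each family separately. Recall the Simon principle: if $(f_\e)$ is bounded in $L_p(0,T;B_0)$ with $B_0 \hookrightarrow\hookrightarrow B \hookrightarrow B_1$ continuously (the first embedding being compact), and if $(\partial_t f_\e)$ is bounded in $L_q(0,T;B_1)$ for some $q>1$, then $(f_\e)$ is relatively compact in $L_p(0,T;B)$. Such a lemma is presumably among the tools gathered in the Appendix. The strategy is therefore to identify, for each of $h_\e$ and $\Gamma_\e$, the right triple $(B_0, B, B_1)$ with the space regularity coming from the energy estimates of Lemma~\ref{lec1} and Lemma~\ref{lec3}, and the time regularity coming from Lemma~\ref{lec4}.

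For the family $(h_\e)_\e$, the first step is to promote \eqref{c12} to a Hölder estimate on $h_\e$ itself. By the one-dimensional embedding $W_2^1(0,L)\hookrightarrow \mathcal{C}^{1/2}([0,L])$, one has $[h_\e^{5/2}(t)]_{\mathcal{C}^{1/2}}\le C\,\|h_\e^{5/2}(t)\|_{W_2^1}$. Since the map $r\mapsto r^{2/5}$ is $2/5$-Hölder on $[0,\infty)$, composition yields
\begin{equation*}
[h_\e(t)]_{\mathcal{C}^{1/5}}\le C\,\|h_\e^{5/2}(t)\|_{W_2^1}^{2/5}\,,
\end{equation*}
and combining with the $L_5(0,T;L_\infty(0,L))$ bound supplied by \eqref{c12} shows that $(h_\e)_\e$ is bounded in $L_5(0,T;\mathcal{C}^{1/5}([0,L]))$. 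The embedding $\mathcal{C}^{1/5}([0,L])\hookrightarrow\hookrightarrow \mathcal{C}^\vartheta([0,L])$ is compact for every $\vartheta\in [0,1/5)$ by Arzelà--Ascoli, while $\mathcal{C}^\vartheta([0,L])\hookrightarrow L_2(0,L)\hookrightarrow W_2^1(0,L)'$. Applying the Simon criterion with $p=5$, $q=5/4$, together with \eqref{c23bbb}, delivers \eqref{c29}.

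For $(\Gamma_\e)_\e$ the space bound is exactly \eqref{c21}, which gives a uniform bound in $L_2(0,T;\mathcal{X})$. The key observation is that elements of $\mathcal{X}$ share the common modulus of continuity $\omega_\ell$, with $\omega_\ell(r)\to 0$ as $r\to 0$ (since $\varrho(\delta)\to\infty$ as $\delta\to 0$ in \eqref{c17}, so $\ln\varrho(\delta)\to\infty$), so that bounded sets in $\mathcal{X}$ are equicontinuous, and Arzelà--Ascoli yields the compact embedding $\mathcal{X}\hookrightarrow\hookrightarrow \mathcal{C}([0,L])$. For the weak space, $\mathcal{C}([0,L])\hookrightarrow L_1(0,L)$ trivially, and the one-dimensional Sobolev embedding $W_{18/17}^1(0,L)\hookrightarrow L_\infty(0,L)$ gives by duality $L_1(0,L)\hookrightarrow W_{18/17}^1(0,L)'$. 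The Simon criterion with $p=2$, $q=18/17$, combined with \eqref{c21} and \eqref{c24}, then produces \eqref{c30}.

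The only genuinely delicate point is the composition argument converting $h_\e^{5/2}\in L_2(0,T;W_2^1)$ into $h_\e\in L_5(0,T;\mathcal{C}^{1/5})$; once that is in hand, the rest is a bookkeeping exercise matching the available bounds to the hypotheses of the Simon lemma and checking the chain of embeddings. The tailor-made space $\mathcal{X}$ was precisely designed so that the logarithmic integrability of $\Gamma_\e\,|\!\ln\Gamma_\e|$ from \eqref{c9} (transferred to $B_\e$ via Lemma~\ref{lec2}) gives the right modulus of continuity for equicontinuity in $\mathcal{C}([0,L])$.
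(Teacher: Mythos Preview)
Your proof is correct and follows essentially the same route as the paper: both derive the $L_5(0,T;\mathcal{C}^{1/5})$ bound on $h_\e$ via the $2/5$-H\"older continuity of $r\mapsto r^{2/5}$ applied to $h_\e^{5/2}$, invoke the compact embeddings $\mathcal{C}^{1/5}\hookrightarrow\hookrightarrow\mathcal{C}^\vartheta$ and $\mathcal{X}\hookrightarrow\hookrightarrow\mathcal{C}([0,L])$, and then apply Simon's compactness result (the paper cites \cite[Corollary~4]{Si87} directly rather than the Appendix) together with the time-derivative bounds \eqref{c23bbb} and \eqref{c24}. Your additional explicit verification of the embedding chains $\mathcal{C}^\vartheta\hookrightarrow W_2^1(0,L)'$ and $\mathcal{C}([0,L])\hookrightarrow W_{18/17}^1(0,L)'$ is correct and merely spells out what the paper leaves implicit.
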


\begin{proof}
For $(t,x,y)\in (0,\infty]\times (0,L)^2$, we have 
$$
|h_\e(t,x)-h_\e(t,y)| \le \left| h_\e(t,x)^{5/2} - h_\e(t,y)^{5/2} \right|^{2/5} \le |x-y|^{1/5}\ \left\| \partial_x \left( h_\e^{5/2} \right)(t) \right\|_2^{2/5}\,,
$$
and we infer from \eqref{c12} that 
\begin{equation}
(h_\e)_\e \;\;\mbox{ is bounded in }\;\; L_5(0,T;\mathcal{C}^{1/5}([0,L]))\,. \label{c31}
\end{equation}
By the Arzel\`a-Ascoli theorem, $\mathcal{C}^{1/5}([0,L])$ is compactly embedded in $\mathcal{C}^\vartheta([0,L])$ for all $\vartheta\in [0,1/5)$ and it follows from \eqref{c23bbb}, \eqref{c31}, and \cite[Corollary~4]{Si87} that \eqref{c29} holds true.

Similarly, $\mathcal{X}$ is compactly embedded in $\mathcal{C}([0,L])$ (since $\omega_\ell(\delta)\to 0$ as $\delta\to 0$) and we infer from \eqref{c21}, \eqref{c24}, and \cite[Corollary~4]{Si87} that \eqref{c30} holds true.
\end{proof}

\subsection{Convergence}\label{sec:cv}

According to \eqref{c10}, \eqref{c11}, \eqref{c20}, and Lemma~\ref{lec5}, there are functions $h$, $g_1$, $\Gamma$, $g$, $\overline{J_f}$, and $\overline{J_s}$ and a sequence $(\e_k)_k$, $\e_k\to 0$, such that, for all $T>0$ and $\vartheta\in [0,1/5)$, 
\begin{eqnarray}
& & h_{\e_k} \longrightarrow h \;\;\mbox{ in }\;\; L_5(0,T;\mathcal{C}^\vartheta([0,L])) \;\;\mbox{ and a.e. in }\;\; Q_T\,, \label{c101} \\
& & \partial_x \alpha_1(h_{\e_k}) \rightharpoonup g_1 \;\;\mbox{ in }\;\; L_2(Q_T)\,, \label{c102} \\
& & \Gamma_{\e_k} \longrightarrow \Gamma \;\;\mbox{ in }\;\; L_2(0,T;\mathcal{C}([0,L])) \;\;\mbox{ and a.e. in }\;\; Q_T\,, \label{c103} \\
& & \partial_x \sigma(\Gamma_{\e_k}) \rightharpoonup g \;\;\mbox{ in }\;\; L_1(Q_T)\,, \label{c104} \\
& & J_{f,\e_k} \rightharpoonup \overline{J_f} \;\;\mbox{ in }\;\; L_2(Q_T)\,, \label{c105} \\
& & J_{s,\e_k} \rightharpoonup \overline{J_s} \;\;\mbox{ in }\;\; L_2(Q_T)\,. \label{c106}
\end{eqnarray}
An obvious consequence of \eqref{c1}, \eqref{c2}, \eqref{c101}, and \eqref{c103} is that 
\begin{eqnarray}
h\ge 0\,, \quad \Gamma\ge 0\,, \quad g_1=\partial_x\alpha_1(h)\,, \quad g=\partial_x\sigma(\Gamma)\,, \label{c104b} \\
\|h(t)\|_1=\|h_0\|_1\,, \quad \|\Gamma(t)\|_1=\|\Gamma_0\|_1\,, \quad t\ge 0\,. \label{c104c}
\end{eqnarray}

The next step is to investigate the convergence of $(H_{\e_k})_k$, $(\Sigma_{\e_k})_k$, $(A_{\e_k})_k$, and $(B_{\e_k})_k$ in the light of \eqref{c101} and \eqref{c103}. For that purpose, we need the following preliminary results.

\begin{lemma}\label{lec6}
Consider $s_1\in (0,1)$. There is $C_9=C_9 (s_1)>0$ such that, for all $\e>0$,
\begin{equation}
\|\mathcal{N}_\e (w)\|_{\mathcal{C}^{s_1}} \le C_9\ \|w \|_{\mathcal{C}^{s_1}}\,, \quad w\in\mathcal{C}^{s_1}([0,L])\,, \label{alpha0}
\end{equation}
the operator $\mathcal{N}_\e$ being defined in \eqref{b0}.
\end{lemma}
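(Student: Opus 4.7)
I would prove the lemma by reducing the boundary value problem on $[0,L]$ to a convolution on $\mathbb{R}$ via a symmetric extension, so that the uniform-in-$\varepsilon$ bound follows from the fact that the fundamental solution of $1-\varepsilon^2\partial_x^2$ on the line is a probability kernel.

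\medskip

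\textbf{Step 1 (Symmetric extension).} Extend $w\in\mathcal{C}^{s_1}([0,L])$ to a $2L$-periodic function $\tilde w$ on $\mathbb{R}$ by even reflection across $x=0$ and $x=L$; concretely $\tilde w(x):=w(\rho(x))$, where $\rho:\mathbb{R}\to[0,L]$ is the $2L$-periodic tent map with $\rho(x)=|x|$ on $[-L,L]$. Since $|\rho(x)-\rho(y)|\le |x-y|$ for every $x,y\in\mathbb{R}$, one has
\bqnn
\|\tilde w\|_{\mc^{s_1}(\RR)}\le \|w\|_{\mc^{s_1}([0,L])}\ .
\eqnn

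\textbf{Step 2 (Reduction to convolution on $\RR$).} Let $\mathcal{G}_\varepsilon(x):=\frac{1}{2\varepsilon}\,e^{-|x|/\varepsilon}$ be the fundamental solution of $1-\varepsilon^2\partial_x^2$ on $\RR$, and set $\tilde u:=\mathcal{G}_\varepsilon*\tilde w$. Then $\tilde u-\varepsilon^2\partial_x^2\tilde u=\tilde w$ on $\RR$; moreover, because $\mathcal{G}_\varepsilon$ and $\tilde w$ are both even around $0$ and around $L$, so is $\tilde u$, which forces $\partial_x\tilde u(0)=\partial_x\tilde u(L)=0$. By the uniqueness statement in \eqref{b0}, $\mathcal{N}_\varepsilon(w)=\tilde u|_{[0,L]}$.

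\textbf{Step 3 (Uniform estimates from convolution).} The kernel $\mathcal{G}_\varepsilon$ is nonnegative with $\int_\RR \mathcal{G}_\varepsilon=1$, so Jensen's inequality gives
\bqnn
\|\mathcal{N}_\varepsilon(w)\|_\infty\le \|\tilde u\|_{L_\infty(\RR)}\le \|\tilde w\|_{L_\infty(\RR)}=\|w\|_\infty\ .
\eqnn
For the Hölder seminorm, translation invariance of convolution yields, for all $x,y\in[0,L]$,
\bqnn
|\mathcal{N}_\varepsilon(w)(x)-\mathcal{N}_\varepsilon(w)(y)|\le \int_\RR \mathcal{G}_\varepsilon(z)\,|\tilde w(x-z)-\tilde w(y-z)|\,dz\le [\tilde w]_{\mc^{s_1}(\RR)}\,|x-y|^{s_1}\ ,
\eqnn
so that $[\mathcal{N}_\varepsilon(w)]_{\mc^{s_1}([0,L])}\le [\tilde w]_{\mc^{s_1}(\RR)}\le [w]_{\mc^{s_1}([0,L])}$. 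Adding this to the sup bound and taking $C_9=C_9(s_1)$ to be any constant $\ge 1$ yields \eqref{alpha0}.

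\medskip

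The only delicate point is Step 1, i.e.\ producing an extension on all of $\RR$ whose Hölder norm is controlled by $\|w\|_{\mc^{s_1}}$ and which is compatible with the Neumann condition through Step 2; the tent-map composition both preserves the Hölder seminorm (via the non-expansiveness $|\rho(x)-\rho(y)|\le|x-y|$) and enforces the symmetries needed to identify the convolution on $\RR$ with $\mathcal{N}_\varepsilon(w)$ on $[0,L]$. Once this is in place, the uniform bound in $\varepsilon$ is immediate from the probability-kernel nature of $\mathcal{G}_\varepsilon$.
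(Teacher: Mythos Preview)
Your proof is correct and takes a genuinely different route from the paper. The paper argues abstractly: it writes $\mathcal{N}_\e=\e^{-2}(\e^{-2}-\partial_x^2)^{-1}$ as a scaled resolvent of the Neumann Laplacian on $\mathcal{C}^{s_1}$ and invokes the sectorial estimates from \cite[Cor.~3.1.32, Def.~2.0.1]{Lunardi}, using that the spectral bound is zero to get the $\e$-independent constant. Your argument is fully elementary: by even reflection you reduce to convolution on $\RR$ with the explicit kernel $\mathcal{G}_\e(x)=(2\e)^{-1}e^{-|x|/\e}$, and then the uniform bound is immediate from $\|\mathcal{G}_\e\|_{L_1}=1$. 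The advantages of your approach are that it is self-contained (no semigroup machinery), it gives the sharp constant $C_9=1$, and it makes transparent \emph{why} the bound is uniform in $\e$ (the resolvent kernel is a probability measure). The paper's approach, in turn, generalizes effortlessly to higher dimensions and other elliptic operators where an explicit Green's function is unavailable. Both are valid; yours is arguably preferable here given the one-dimensional setting.
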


\begin{proof}
Interpreting 
$$
\mn_\e=\frac{1}{\e^2}\left(\frac{1}{\e^2}-\partial_x^2\right)^{-1}
$$
as a resolvent on $\mc^{s_1}$ of the negative Laplacian subject to homogeneous Neumann boundary conditions and noting that the latter has zero spectral bound, the assertion readily follows from \cite[Cor.~3.1.32]{Lunardi} and \cite[Def.~2.0.1]{Lunardi}.
\end{proof}

\begin{lemma}\label{lec7}
Consider $0<s_0<s_1<1$. There are numbers $\vartheta\in (0,1)$, $C_{10}>0$, and $p\ge 2$, all depending on $s_0$ and $s_1$, such that
\begin{equation}
\|w\|_{\mathcal{C}^{s_0}} \le C_{10}\ \|w \|_{\mathcal{C}^{s_1}}^\vartheta\ \|w\|_p^{1-\vartheta}\,, \quad w\in\mathcal{C}^{s_1}([0,L])\,. \label{alpha1}
\end{equation}
\end{lemma}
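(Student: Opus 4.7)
This is a standard Gagliardo--Nirenberg interpolation between a Hölder space and an $L_p$ space on a bounded interval. My plan is to prove it by elementary means in two steps, rather than invoking abstract interpolation theory.

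\textbf{Step 1 ($L_\infty$ interpolation).} I would first establish that for every $p\ge 2$ and $w\in \mc^{s_1}([0,L])$,
$$
\|w\|_\infty \le C\, \|w\|_{\mc^{s_1}}^{\alpha}\, \|w\|_p^{1-\alpha}, \qquad \alpha := \frac{1}{s_1 p + 1}.
$$
For $\delta\in (0,L)$ and $x\in[0,L]$, pick any subinterval $I\subset [0,L]$ of length $\delta$ containing $x$. For $y\in I$, Hölder continuity yields $|w(x)|\le |w(y)|+[w]_{\mc^{s_1}}\delta^{s_1}$; averaging over $y\in I$ and applying Hölder's inequality gives
$$
|w(x)| \le [w]_{\mc^{s_1}}\delta^{s_1} + \delta^{-1/p}\|w\|_p,
$$
and optimizing $\delta$ so that the two terms balance (namely $\delta^{s_1+1/p}\sim\|w\|_p/[w]_{\mc^{s_1}}$) yields the claimed bound.

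\textbf{Step 2 (Hölder seminorm interpolation and combination).} The standard factorization
$$
|w(x)-w(y)| = |w(x)-w(y)|^{s_0/s_1}\,|w(x)-w(y)|^{1-s_0/s_1} \le [w]_{\mc^{s_1}}^{s_0/s_1}\,|x-y|^{s_0}\,(2\|w\|_\infty)^{1-s_0/s_1}
$$
gives $[w]_{\mc^{s_0}} \le 2\,[w]_{\mc^{s_1}}^{s_0/s_1}\,\|w\|_\infty^{1-s_0/s_1}$. Inserting Step~1 and using $[w]_{\mc^{s_1}}\le \|w\|_{\mc^{s_1}}$ leads to $[w]_{\mc^{s_0}} \le C\,\|w\|_{\mc^{s_1}}^{\vartheta}\,\|w\|_p^{1-\vartheta}$ with
$$
\vartheta := \frac{s_0}{s_1} + \Bigl(1 - \frac{s_0}{s_1}\Bigr)\,\frac{1}{s_1 p + 1} \in (0,1),
$$
since $0<s_0<s_1$ and $p\ge 2$ is finite. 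A matching estimate for $\|w\|_\infty$ with the same exponent follows from Step~1, using the trivial bound $\|w\|_p \le L^{1/p}\|w\|_\infty \le L^{1/p}\|w\|_{\mc^{s_1}}$ to raise the $\mc^{s_1}$-power from $\alpha$ up to $\vartheta$. Adding the two estimates gives the claim with any $p\ge 2$.

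\textbf{Main difficulty.} The argument is essentially routine; the only care is in the bookkeeping of exponents to check that $\vartheta$ lies strictly in $(0,1)$ and to combine the Hölder seminorm part and the $L_\infty$ part of $\|\cdot\|_{\mc^{s_0}}$ with a common exponent $\vartheta$. No deep fact is needed beyond a local Hölder-type oscillation estimate and Hölder's inequality.
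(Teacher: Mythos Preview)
Your argument is correct. Step~1 is the standard oscillation-plus-averaging estimate, and the optimization in $\delta$ is handled correctly (the edge case $\delta\ge L$ is covered by taking $\delta=L$ and using $\|w\|_p\le L^{1/p}\|w\|_{\mc^{s_1}}$, as you note later). Step~2 is the usual splitting of the Hölder seminorm, and the exponent bookkeeping is right: $\vartheta=(1-\alpha)\,s_0/s_1+\alpha$ is a genuine convex combination of $s_0/s_1\in(0,1)$ and $1$, hence lies strictly in $(0,1)$, and since $\vartheta>\alpha$ you can indeed lift the $L_\infty$ estimate to the same exponent using $\|w\|_p\le L^{1/p}\|w\|_{\mc^{s_1}}$.

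The paper takes a rather different route, going through abstract interpolation and Besov spaces: it chooses intermediate exponents $s_0<\sigma_0<\sigma_1<s_1$ and $p$ large with $s_0\le\sigma_0-1/p$, embeds $\mc^{s_1}\hookrightarrow W_p^{\sigma_1}$ via real interpolation and Besov embeddings, uses the Sobolev interpolation inequality $\|w\|_{W_p^{\sigma_0}}\le C\,\|w\|_{W_p^{\sigma_1}}^{\sigma_0/\sigma_1}\|w\|_p^{1-\sigma_0/\sigma_1}$, and then embeds $W_p^{\sigma_0}\hookrightarrow\mc^{s_0}$; this yields $\vartheta=\sigma_0/\sigma_1$. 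Your proof is more elementary and self-contained, avoids Besov machinery entirely, and in fact works for \emph{every} $p\ge 2$ (with $\vartheta$ depending on $p$), whereas the paper's argument needs $p$ large depending on $s_0,s_1$. For the application in the paper (passing to the limit for $H_{\e_k}$) only the existence of some admissible $(\vartheta,p)$ is used, so either proof suffices.
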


\begin{proof}
Let $s_0<\sigma_0< \sigma_1<s_1$ and let $ \nu\in (0,1)$ and $p\ge 2$ be such that $s_1=\sigma_1+2 \nu$ and $s_0\le \sigma_0-1/p$. On the one hand, since $\mathcal{C}^{ \nu}([0,L])$ is continuously embedded in $L_p(0,L)$ and $\mathcal{C}^{1+\nu}([0,L])$ is continuously embedded in $W_p^1(0,L)$, interpolation theory guarantees that $\left( \mathcal{C}^{ \nu}([0,L]) , \mathcal{C}^{1+ \nu}([0,L]) \right)_{\sigma_1+ \nu,\infty}$ is continuously embedded in $\left( L_p(0,L) , W_p^1(0,L) \right)_{\sigma_1+ \nu,\infty}$, where $\left( \cdot , \cdot \right)_{\sigma_1+ \nu,\infty}$ denotes the real interpolation method. Since 
$$
\left( \mathcal{C}^{ \nu}([0,L]) , \mathcal{C}^{1+ \nu}([0,L]) \right)_{\sigma_1+ \nu,\infty} = \mathcal{C}^{\sigma_1+2 \nu}([0,L]) \;\;\mbox{ and }\;\; \left( L_p(0,L) , W_p^1(0,L) \right)_{\sigma_1+ \nu,\infty} = B_{p,\infty}^{\sigma_1+ \nu}(0,L)
$$
by \cite[ (5.1), (5.21), (5.22) ]{AmannTeubner93} and $B_{p,\infty}^{\sigma_1+ \nu}(0,L)$ is continuously embedded in $B_{p,1}^{\sigma_1}(0,L)$ which is itself continuously embedded in $B_{p,p}^{\sigma_1}(0,L)=W_p^{\sigma_1}(0,L)$ by \cite[(5.3), (5.5)]{AmannTeubner93}, we have shown that 
\begin{equation}
\| w \|_{W_p^{\sigma_1}} \le C(s_1,\sigma_1,p)\ \|w\|_{\mathcal{C}^{s_1}}\,, \quad w\in\mathcal{C}^{s_1}([0,L])\,. \label{alpha2}
\end{equation}
On the other hand, we have
$$
\| w \|_{W_p^{\sigma_0}} \le C(\sigma_0,\sigma_1,p)\ \| w \|_{W_p^{\sigma_1}}^{\sigma_0/\sigma_1}\ \|w\|_p^{(\sigma_1-\sigma_0)/\sigma_1}\,, \quad w\in W_p^{\sigma_1}(0,L)\,,
$$
while the choice of $p$ implies that $W_p^{\sigma_0}(0,L)$ is continuously embedded in $\mathcal{C}^{s_0}([0,L])$ by \cite[(5.1), (5.3), (5.5), (5.8)]{AmannTeubner93}. Consequently,
\begin{equation}
\| w \|_{\mathcal{C}^{s_0}} \le C(s_0,\sigma_0,p)\ \| w \|_{W_p^{\sigma_0}}  \le C(s_0,\sigma_0,\sigma_1,p)\ \| w \|_{W_p^{\sigma_1}}^{\sigma_0/\sigma_1}\ \|w\|_p^{(\sigma_1-\sigma_0)/\sigma_1}\,, \quad w\in W_p^{\sigma_1}(0,L)\,. \label{alpha3}
\end{equation}
Combining \eqref{alpha2} and \eqref{alpha3} gives \eqref{alpha1} with $\vartheta=\sigma_0/\sigma_1$.
\end{proof}

A useful consequence of \eqref{c101}, Lemma~\ref{lec6}, and Lemma~\ref{lec7} is the convergence of $(H_{\e_k})_k$.

\begin{lemma}\label{lem8} Given $T>0$, we have
\begin{equation}
H_{\e_k} \longrightarrow h \;\;\mbox{ in }\;\; L_5(0,T;\mathcal{C}([0,L])) \;\;\mbox{ and a.e. in }\;\; Q_T\,. \label{zz2}
\end{equation}
\end{lemma}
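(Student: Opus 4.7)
The approach is to decompose
\begin{equation*}
H_{\e_k} - h \;=\; \mn_{\e_k}(h_{\e_k}) - h \;=\; \mn_{\e_k}(h_{\e_k} - h) \;+\; \bigl(\mn_{\e_k}(h) - h\bigr)
\end{equation*}
and treat the two contributions separately. The first one encodes the convergence of $h_{\e_k}$ to $h$ (and is controlled by the uniform $\mc^{s_1}$-estimate of Lemma~\ref{lec6}), while the second is the classical fact that the resolvent $\mn_\e$ converges to the identity as $\e \to 0$, to be obtained via a density argument combined with the commutation $\partial_x^2 \mn_\e(\phi) = \mn_\e(\partial_x^2 \phi)$ for $\phi$ satisfying the homogeneous Neumann boundary condition.

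For the first term, fix $s_1 \in (0, 1/5)$. The linearity of $\mn_{\e_k}$, Lemma~\ref{lec6}, and the embedding $\mc^{s_1}([0,L]) \hookrightarrow \mc([0,L])$ give
\begin{equation*}
\|\mn_{\e_k}(h_{\e_k}(t) - h(t))\|_\infty \;\le\; C\, \|h_{\e_k}(t) - h(t)\|_{\mc^{s_1}}
\end{equation*}
uniformly in $k$ and $t$, and \eqref{c101} (applied with $\vartheta = s_1$) immediately yields $\mn_{\e_k}(h_{\e_k} - h) \to 0$ in $L_5(0,T;\mc([0,L]))$.

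For the second term, I first establish the pointwise-in-$t$ convergence $\mn_{\e_k}(h(t)) \to h(t)$ in $\mc([0,L])$ for every $t$ such that $h(t) \in \mc([0,L])$, which is a.e.\ $t \in (0,T)$ by \eqref{c101}. Fix such a $t$ and $\delta>0$, and pick $\phi \in \mc^2([0,L])$ satisfying $\partial_x\phi(0) = \partial_x\phi(L) = 0$ with $\|h(t)-\phi\|_\infty < \delta$ (such a $\phi$ exists by standard density, e.g.\ via even reflection across the endpoints followed by mollification). The $p=\infty$ case of \eqref{b103a} provides the $L_\infty$-contractivity of $\mn_\e$, hence
\begin{equation*}
\|\mn_\e(h(t)) - h(t)\|_\infty \;\le\; 2\|h(t) - \phi\|_\infty + \|\mn_\e(\phi) - \phi\|_\infty \;\le\; 2\delta + \e^2 \|\partial_x^2 \phi\|_\infty\,,
\end{equation*}
where I use $\mn_\e(\phi) - \phi = \e^2 \partial_x^2 \mn_\e(\phi) = \e^2 \mn_\e(\partial_x^2\phi)$ (the second equality following from the Neumann boundary condition on $\phi$) together with the same contractivity. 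Letting $\e \to 0$ and then $\delta \to 0$ proves the pointwise claim. Since the contractivity also provides the $k$-independent bound $\|\mn_{\e_k}(h(t)) - h(t)\|_\infty \le 2\|h(t)\|_{\mc^{s_1}}$, whose fifth power is integrable in $t$ by \eqref{c101}, dominated convergence yields $\mn_{\e_k}(h) - h \to 0$ in $L_5(0,T;\mc([0,L]))$.

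Adding the two contributions gives $H_{\e_k} \to h$ in $L_5(0,T;\mc([0,L]))$, and extracting a further subsequence produces the a.e.\ convergence on $Q_T$. The only delicate step is the pointwise-in-$t$ approximation of the identity by $\mn_{\e_k}$, whose proof rests on the density of $\mc^2$-functions with homogeneous Neumann boundary condition in $\mc([0,L])$ and on the commutation $\partial_x^2\mn_\e(\phi) = \mn_\e(\partial_x^2\phi)$ valid under that boundary condition; the rest amounts to bookkeeping with Lemma~\ref{lec6} and the $L_\infty$-contractivity of $\mn_\e$.
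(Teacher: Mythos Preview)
Your proof is correct and takes a genuinely different route from the paper. The paper first establishes the intermediate convergence $H_{\e_k}\to h$ in $L_2(Q_T)$ by exploiting the variational identity $\|H_\e\|_2^2+\e^2\|\partial_x H_\e\|_2^2=\int_0^L H_\e h_\e\,dx$ coming from \eqref{b9}, and then upgrades this to $L_5(0,T;\mc^{s_0}([0,L]))$ via the interpolation inequality of Lemma~\ref{lec7} between $\mc^{1/5}$ and $L_p$, combined with the uniform $\mc^{1/5}$-bound of Lemma~\ref{lec6}. Your decomposition $H_{\e_k}-h=\mn_{\e_k}(h_{\e_k}-h)+(\mn_{\e_k}(h)-h)$ bypasses both the $L_2$ step and Lemma~\ref{lec7} entirely: the first piece is controlled directly by Lemma~\ref{lec6} and \eqref{c101}, the second by the elementary resolvent-approximation argument plus dominated convergence. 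Your argument is shorter and avoids the Besov-space interpolation underlying Lemma~\ref{lec7}; the paper's variational $L_2$ technique, on the other hand, is the template reused verbatim in Lemma~\ref{lec8} for $A_{\e_k}$, $B_{\e_k}$, and in particular $\Sigma_{\e_k}$, where the variable-coefficient operator in \eqref{b10} rules out a clean commutation identity such as $\partial_x^2\mn_\e(\phi)=\mn_\e(\partial_x^2\phi)$.
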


\begin{proof}
We first claim that 
\begin{equation}
H_{\e_k} \longrightarrow h \;\;\mbox{ in }\;\; L_2(Q_T) \;\;\mbox{ and a.e. in }\;\; Q_T\,. \label{c106d}
\end{equation}
Indeed, it follows from \eqref{b103a}, \eqref{c9}, and \eqref{c12} that 
\begin{equation}
\|H_\e(t)\|_2 \le C_5 \;\;\mbox{ and }\;\; \int_0^t \|H_\e(s)\|_\infty^5\ ds \le C_5\ (1+t) \label{c106b}
\end{equation}
for $t\ge 0$. We may thus assume (after possibly extracting a further subsequence) that 
\begin{equation}
H_{\e_k} \rightharpoonup h \;\;\mbox{ in }\;\; L_2(Q_T) \label{c106c}
\end{equation}
for all $T>0$. It also follows from  the definition \eqref{b9} of $H_\e$ that 
$$
\|H_\e(t)\|_2^2 + \e^2\ \left\| \partial_x H_\e(t) \right\|_2^2 = \int_0^L H_\e(t,x)\ h_\e(t,x)\ dx\,, \quad t\ge 0\,.
$$
Consequently, given $T>0$, we deduce from \eqref{c106b} that 
\begin{eqnarray*}
& & \int_0^T \|H_\e(t) - h(t) \|_2^2\ dt \\ 
& \le & \int_0^T \left[ \|H_\e(t)\|_2^2 + \e^2\ \left\| \partial_x H_\e(t) \right\|_2^2 + \|h(t)\|_2^2 - 2\ \int_0^L H_\e(t,x)\ h(t,x)\ dx \right]\ dt \\
& \le & \int_0^T \int_0^L \left[ H_\e(t,x)\ (h_\e(t,x) -h(t,x)) + h(t,x)\ (h(t,x) - H_\e(t,x)) \right]\ dxdt \\
& \le & C_5 T^{1/2}\ \left( \int_0^T \| h_\e(t)-h(t)\|_2^2\ dt \right)^{1/2} + \int_0^T \int_0^L h(t,x)\ (h(t,x) - H_\e(t,x))\ dxdt\,.
\end{eqnarray*}
We then take $\e=\e_k$ in the above inequality and pass to the limit as $k\to\infty$ with the help of \eqref{c101} and \eqref{c106c} to complete the proof of \eqref{c106d}, extracting possibly a further subsequence to obtain the convergence almost everywhere. 

Now, fix $s_0\in (0,1/5)$ and let $\vartheta\in (0,1)$ and $p\ge 2$ be given by Lemma~\ref{lec7} with $s_1=1/5$. By \eqref{c31}, \eqref{c106b}, Lemma~\ref{lec6} (with $s_1=1/5$), Lemma~\ref{lec7}, and H\"older's inequality, we have
\begin{eqnarray*}
& & \int_0^T \|H_{\e_k}(t)-h(t)\|_{\mathcal{C}^{s_0}}^5\ dt \\ 
& \le & C(s_0)\ \int_0^T \|H_{\e_k}(t)-h(t)\|_{\mathcal{C}^{1/5}}^{5\vartheta}\ \|H_{\e_k}(t)-h(t)\|_p^{5(1-\vartheta)}\ dt \\
& \le & C(s_0)\ \int_0^T \left( \|H_{\e_k}(t)\|_{\mathcal{C}^{1/5}}^{5\vartheta} + \|h(t)\|_{\mathcal{C}^{1/5}}^{5\vartheta} \right)\ \|H_{\e_k}(t)-h(t)\|_\infty^{5(p-2)(1-\vartheta)/p}\ \|H_{\e_k}(t)-h(t)\|_2^{10(1-\vartheta)/p}\ dt \\
& \le & C(s_0)\ \int_0^T \|h(t)\|_{\mathcal{C}^{1/5}}^{5\vartheta}\ \left( \|H_{\e_k}(t) \|_\infty + \|h(t)\|_\infty \right)^{5(p-2)(1-\vartheta)/p}\ \|H_{\e_k}(t)-h(t)\|_2^{10(1-\vartheta)/p}\ dt \\
& \le & C(s_0,T)\ \left( \int_0^T \left( \|H_{\e_k}(t)\|_\infty + \|h(t)\|_\infty \right)^{5(p-2)/p}\ \|H_{\e_k}(t)-h(t)\|_2^{10/p}\ dt \right)^{1-\vartheta} \\
& \le & C(s_0,T)\ \left( \int_0^T \|H_{\e_k}(t)-h(t)\|_2^5\ dt \right)^{2(1-\vartheta)/p} \\
& \le & C(s_0,T)\ \left( \int_0^T \|H_{\e_k}(t)-h(t)\|_2^2\ dt \right)^{2(1-\vartheta)/p}\ ,
\end{eqnarray*}
 where we have used \eqref{b103a} and \eqref{c9} to obtain the last inequality.
The convergence \eqref{zz2} then follows by \eqref{c106d} thanks to the continuous embedding of $\mathcal{C}^{s_0}([0,L])$ in $\mathcal{C}([0,L])$. 
\end{proof}

The last result of this section is devoted to $(A_\e)_\e$, $(B_\e)_\e$, and $(\Sigma_\e)_\e$.

\begin{lemma}\label{lec8} For $T>0$,  we have
\begin{eqnarray}
& & A_{\e_k} \longrightarrow \alpha_1(h) \;\;\mbox{ in }\;\; L_2(Q_T) \;\;\mbox{ and a.e. in }\;\; Q_T\,, \label{c106g}\\ 
& & \partial_x A_{\e_k} \rightharpoonup \partial_x \alpha_1(h) \;\;\mbox{ in }\;\; L_2(Q_T)\,, \label{c133}\\
& & B_{\e_k} \longrightarrow \Gamma \;\;\mbox{ in }\;\; L_2(Q_T) \;\;\mbox{ and a.e. in }\;\; Q_T\,, \label{c106e}\\
& & \Sigma_{\e_k} \longrightarrow \sigma(\Gamma) \;\;\mbox{ in }\;\; L_2(Q_T) \;\;\mbox{ and a.e. in }\;\; Q_T\,, \label{c106f}
\end{eqnarray}
after possibly extracting a further subsequence.
\end{lemma}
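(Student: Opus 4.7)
All four convergences share a common mechanism: the resolvent $\mn_\e=(I-\e^2\partial_x^2)^{-1}$ (with homogeneous Neumann boundary conditions) is an $L_p$-contraction for every $p\in[1,\infty]$ (cf.\ \eqref{b103a}, \eqref{b104a}, \eqref{b106}) and converges strongly to the identity on $L_2(0,L)$ as $\e\to 0$. Combined with the strong convergences of $(h_{\e_k})$ and $(\Gamma_{\e_k})$ from Lemma~\ref{lec5}, this handles $A_\e$ and $B_\e$; the extra estimate for $\Sigma_\e$ will come from the energy inequality \eqref{b109}.

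For $B_{\e_k}$, I would write $B_{\e_k}-\Gamma = \mn_{\e_k}(\Gamma_{\e_k}-\Gamma) + (\mn_{\e_k}(\Gamma)-\Gamma)$. The first summand is controlled in $L_2(Q_T)$ by $\|\Gamma_{\e_k}-\Gamma\|_{L_2(Q_T)}$ thanks to \eqref{b106}, which vanishes by \eqref{c103}. For the second, the spectral calculus for the Neumann Laplacian gives $\mn_{\e_k}(\Gamma(t,\cdot))\to\Gamma(t,\cdot)$ in $L_2(0,L)$ for a.e.\ $t$, and the pointwise bound $\|\mn_{\e_k}(\Gamma(t))\|_2\le\|\Gamma(t)\|_2$ (with $\Gamma\in L_2(Q_T)$ via \eqref{c21} passed to the limit) allows Lebesgue dominated convergence in $t$. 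Extracting a further subsequence yields \eqref{c106e}. The argument for \eqref{c106g} is identical provided one first verifies that $\alpha_1(h_{\e_k})=\tfrac{2}{5}\sqrt{G/3}\,h_{\e_k}^{5/2}$ converges to $\alpha_1(h)$ in $L_2(Q_T)$: this uses the elementary estimate $|x^{5/2}-y^{5/2}|\le \tfrac{5}{2}\max(x,y)^{3/2}|x-y|$, a H\"older split with exponents $5/3$ and $5/2$, the $L_5(0,T;L_\infty)$-bound \eqref{c12}, and the $L_5(0,T;\mc([0,L]))$ convergence from \eqref{c101}. For \eqref{c133}, the bound \eqref{b104} together with \eqref{c11} shows that $(\partial_x A_\e)_\e$ is bounded in $L_2(Q_T)$, so a subsequence converges weakly, and its limit must equal $\partial_x\alpha_1(h)=g_1$ by \eqref{c106g}, \eqref{c104b}, and uniqueness of distributional derivatives.

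For $\Sigma_{\e_k}$, the defining equation \eqref{b10} gives the pointwise identity $\Sigma_\e-\sigma(\Gamma_\e)=\e^2\,\partial_x(H_\e\,\partial_x\Sigma_\e)$, so \eqref{b109} and \eqref{c11} yield
\begin{equation*}
\int_0^T\|\Sigma_\e(t)-\sigma(\Gamma_\e(t))\|_2^2\,dt\le \frac{\e^2}{2}\int_0^T\|\sqrt{H_\e(t)}\,\partial_x\sigma(\Gamma_\e(t))\|_2^2\,dt\le C\,\e^2\longrightarrow 0.
\end{equation*}
Since $|\sigma'|\le\sigma_\infty$ by \eqref{a6}, the Lipschitz bound $\|\sigma(\Gamma_{\e_k})-\sigma(\Gamma)\|_{L_2(Q_T)}\le\sigma_\infty\|\Gamma_{\e_k}-\Gamma\|_{L_2(Q_T)}$ and \eqref{c103} give $\sigma(\Gamma_{\e_k})\to\sigma(\Gamma)$ in $L_2(Q_T)$; adding the two estimates yields \eqref{c106f}, and a further subsequence furnishes the a.e.\ convergence. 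The only mildly delicate step is the $L_2(Q_T)$ convergence of $\alpha_1(h_{\e_k})$, which requires marrying the a.e.\ convergence with the superlinear power-of-$h$ bound \eqref{c12}; everything else is a direct application of the resolvent structure of $\mn_\e$ and of the energy identity \eqref{b109}.
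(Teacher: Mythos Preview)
Your argument is correct and actually a bit more direct than the paper's. The paper handles \eqref{c106g} and \eqref{c106e} by mimicking the proof of \eqref{c106d}: it first extracts a weak $L_2$-limit, then multiplies the resolvent equation by the solution itself to get the identity $\|B_\e\|_2^2+\e^2\|\partial_xB_\e\|_2^2=\int_0^L B_\e\Gamma_\e\,dx$, and expands $\|B_\e-\Gamma\|_2^2$ using this to upgrade weak to strong convergence. Your decomposition $B_{\e_k}-\Gamma=\mn_{\e_k}(\Gamma_{\e_k}-\Gamma)+(\mn_{\e_k}\Gamma-\Gamma)$ replaces that algebraic trick by the (standard) fact that $\mn_\e\to I$ strongly on $L_2(0,L)$ plus the $L_2$-contraction property; this is slightly more abstract but shorter and avoids the intermediate weak-compactness step. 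For $\Sigma_\e$ the paper again passes through a weak limit before proving strong convergence via the analogue of the energy identity, whereas your use of $\Sigma_\e-\sigma(\Gamma_\e)=\e^2\partial_x(H_\e\partial_x\Sigma_\e)$ combined with \eqref{b109} gives the $O(\e)$ bound in one line. The treatment of \eqref{c133} is the same in both proofs. Either approach is fine; yours trades the paper's self-contained energy computation for a general resolvent fact, and gains some concision in return.
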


\begin{proof}
The proofs of \eqref{c106g} and \eqref{c106e} are similar to that of \eqref{c106d}, the necessary bounds stemming from \eqref{c11} and \eqref{c21}.

Concerning $(\Sigma_\e)_\e$, we infer from \eqref{b10} that 
$$
\|\Sigma_\e\|_2^2 + \e^2\ \left\| \sqrt{H_\e}\ \partial_x \Sigma_\e \right\|_2^2 = \int_0^L \sigma(\Gamma_\e)\ \Sigma_\e\ dx \le \frac{1}{2}\ \|\sigma(\Gamma_\e)\|_2^2 + \frac{1}{2}\ \|\Sigma_\e\|_2^2\,,
$$
so that $(\Sigma_\e)_\e$ and $\left( \e \sqrt{H_\e}\ \partial_x \Sigma_\e \right)_\e$ are bounded in $L_2(Q_T)$ by \eqref{c19}. Consequently, $(\Sigma_\e)_\e$ is weakly relatively compact in $L_2(Q_T)$ while $\left( \e^2 \sqrt{H_\e}\ \partial_x \Sigma_\e \right)_\e$ converges to zero in $L_2(Q_T)$. These information along with \eqref{b10} and \eqref{c103} allow us to conclude that we have, after possibly extracting a further subsequence, the weak convergence in $L_2(Q_T)$ of $(\Sigma_{\e_k})_k$ to $\sigma(\Gamma)$. We then argue as in the proof of \eqref{c106d} to complete the proof of \eqref{c106f}. 

Finally, owing to \eqref{b104} and \eqref{c11}, $\left( \partial_x A_\e \right)_\e$ is bounded in $L_2(Q_T)$ from which \eqref{c133} follows by \eqref{c106g} after possibly extracting a further subsequence.
\end{proof}

\subsection{Passing to the limit in \eqref{b1}}\label{sec:pttlib1}

Observing that \eqref{b1} also reads $\partial_t h_\e = - \partial_x \left( \sqrt{a_1(h_\e)}\ J_{f,\e} \right)$, we have
\begin{equation}
\frac{d}{dt} \int_0^L h_\e\ \psi\ dx = \int_0^L \partial_x \psi\ \sqrt{a_1(h_\e)}\ J_{f,\e}\ dx \label{c107}
\end{equation}
for all $\psi\in W_\infty^1(0,L)$. 

Now, it follows from \eqref{c9} and \eqref{c101} that $\left( \sqrt{a_1(h_{\e_k})} \right)_k$ converges toward $\sqrt{a_1(h)}$ in $L_2(Q_T)$ for $T>0$. Combining this convergence with \eqref{c105} yields that $\left( \sqrt{a_1(h_{\e_k})}\ J_{f,\e_k} \right)_k$ converges weakly toward $\sqrt{a_1(h)}\ \overline{J_f}$ in $L_1(Q_T)$. We may then pass to the limit in \eqref{c107} and find that 
\begin{equation}
\frac{d}{dt} \int_0^L h\ \psi\ dx = \int_0^L \partial_x \psi\ \sqrt{a_1(h)}\ \overline{J_f}\ dx \label{c109}
\end{equation}
for all $\psi\in W_\infty^1(0,L)$. 

\subsection{Passing to the limit in \eqref{b2}}\label{sec:pttlib2}

We note that \eqref{b2} also reads 
\begin{equation}
\partial_t \Gamma_\e = - \partial_x \left( D\ \frac{\Gamma_\e}{\beta_1'(B_\e)}\ \partial_x\sigma(\Gamma_\e) + \sqrt{\alpha_0(h_\e,H_\e)}\ \Gamma_\e\ J_{s,\e} \right)\,. \label{c110}
\end{equation}

Let $T>0$. We first identify the limit of the second term in the right-hand side of \eqref{c110}. It follows from \eqref{c2} and \eqref{c21} that 
\begin{eqnarray*}
\int_0^T \left\| \left( \sqrt{\alpha_0(h_\e,H_\e)} - \sqrt{h} \right)\ \Gamma_\e \right\|_2^2\ dt & \le & \int_0^T \int_0^L \left| \alpha_0(h_\e,H_\e) - h \right|\ \Gamma_\e^2\ dxdt \\
& \le & \int_0^T \|\alpha_0(h_\e,H_\e)-h\|_\infty\ \|\Gamma_\e\|_\infty\ \|\Gamma_\e\|_1\ dt \\
& \le & C\ \int_0^T \left( \|H_\e-h\|_\infty + \|h_\e-h\|_\infty \right)\ \|\Gamma_\e\|_\infty\ dt \\
& \le & C(T)\ \left( \int_0^T \left( \|H_\e-h\|_\infty^2 + \|h_\e-h\|_\infty^2 \right)\ dt \right)^{1/2}\,,
\end{eqnarray*}
whence
\begin{equation}
\lim_{k\to\infty} \int_0^T \left\| \left( \sqrt{\alpha_0(h_{\e_k},H_{\e_k})} - \sqrt{h} \right)\ \Gamma_{\e_k} \right\|_2^2\ dt = 0 \label{c112}
\end{equation}
by \eqref{c101} and \eqref{zz2}. In addition, owing to \eqref{c104c}, we have
$$
\int_0^T \left\| \sqrt{h}\ \left( \Gamma_\e-\Gamma \right) \right\|_2^2\ dt \le \int_0^T \|h\|_1\ \|\Gamma_\e-\Gamma\|_\infty^2\ dt \le C\ \int_0^T \|\Gamma_\e-\Gamma\|_\infty^2\ dt\,,
$$
so that
\begin{equation}
\lim_{k\to\infty} \int_0^T \left\| \sqrt{h}\ \left( \Gamma_{\e_k}-\Gamma \right) \right\|_2^2\ dt = 0 \label{c113}
\end{equation}
by \eqref{c103}. Gathering \eqref{c112} and \eqref{c113}, we have established that 
$$
\sqrt{\alpha_0(h_{\e_k},H_{\e_k})}\ \Gamma_{\e_k} \longrightarrow \sqrt{h}\ \Gamma \;\;\mbox{ in }\;\; L_2(Q_T)\,,
$$
which, together with \eqref{c106}, implies that 
\begin{equation}
\sqrt{\alpha_0(h_{\e_k},H_{\e_k})}\ \Gamma_{\e_k}\ J_{s,\e_k} \rightharpoonup \sqrt{h}\ \Gamma\ \overline{J_s} \;\;\mbox{ in }\;\; L_1(Q_T)\,.\label{c114}
\end{equation}
We now turn to the first term of the right-hand side of \eqref{c110} and use \eqref{b9} to obtain
\begin{equation}
\frac{\Gamma_\e}{\beta_1'(B_\e)}\ \partial_x\sigma(\Gamma_\e) = \frac{\partial_x\sigma(\Gamma_\e)}{|\sigma'(B_\e)|} - \e^2\ \frac{\partial_x^2 B_\e}{\sqrt{\beta_1'(B_\e)}}\ \frac{\partial_x\sigma(\Gamma_\e)}{\sqrt{\beta_1'(B_\e)}}\,. \label{c115}
\end{equation}
On the one hand, it follows from \eqref{a6}, \eqref{b106}, \eqref{b107}, and repeated use of \eqref{c11} that 
\begin{eqnarray*}
\e^2\ \int_0^T \int_0^L \left| \frac{\partial_x^2 B_\e}{\sqrt{\beta_1'(B_\e)}}\ \frac{\partial_x\sigma(\Gamma_\e)}{\sqrt{\beta_1'(B_\e)}} \right|\ dxdt & \le & C_5 \e^2\ \left( \int_0^T \int_0^L \frac{\left| \partial_x^2 B_\e \right|^2}{\sigma_0 B_\e}\ dxdt \right)^{1/2} \\
& \le & C \e^2\ \left( \int_0^T \frac{\left\| \partial_x^2 B_\e \right\|_2^2}{\e}\ dt \right)^{1/2} \\
& \le & C \e^{3/2}\ \left( \int_0^T \frac{\left\| \partial_x \Gamma_\e \right\|_2^2}{2 \e^2}\ dt \right)^{1/2} \\
& \le & C \e^{1/2}\ \left( \int_0^T \frac{\left\| \partial_x \sigma(\Gamma_\e) \right\|_2^2}{\sigma_0^2}\ dt \right)^{1/2} \\
& \le & C \e^{1/4}\ \left( \int_0^T \sqrt{\e}\ \left\| \partial_x \sigma(\Gamma_\e) \right\|_2^2\ dt \right)^{1/2} \\
& \le & C \e^{1/4}\,,
\end{eqnarray*}
so that
\begin{equation}
\lim_{\e\to 0}\ \e^2\ \int_0^T \left\| \frac{\partial_x^2 B_\e}{\sqrt{\beta_1'(B_\e)}}\ \frac{\partial_x\sigma(\Gamma_\e)}{\sqrt{\beta_1'(B_\e)}} \right\|_1\ dt =0\,. \label{c116}
\end{equation}
On the other hand, we have
$$
\frac{1}{|\sigma'(B_{\e_k})|} \le \frac{1}{\sigma_0} \;\;\mbox{ and }\;\; \frac{1}{|\sigma'(B_{\e_k})|} \longrightarrow \frac{1}{|\sigma'(\Gamma)|} \;\;\mbox{ a.e. in }\;\; Q_T
$$
by \eqref{a6} and \eqref{c106e}. Recalling that $\left( \partial_x \sigma(\Gamma_{\e_k}) \right)_k$ converges weakly toward $\partial_x\sigma(\Gamma)$ in $L_1(Q_T)$ by \eqref{c104} and \eqref{c104b}, Lemma~\ref{leap1} (see the appendix) ensures that 
\begin{equation}
\frac{\partial_x\sigma(\Gamma_{\e_k})}{|\sigma'(B_{\e_k})|} \rightharpoonup \frac{\partial_x\sigma(\Gamma)}{|\sigma'(\Gamma)|} \;\;\mbox{ in }\;\; L_1(Q_T)\,. \label{c117}
\end{equation}
Furthermore, as $\sigma$ is a Lipschitz continuous diffeomorphism with a Lipschitz continuous inverse and $\partial_x\sigma(\Gamma)\in L_1(Q_T)$, we have also $\partial_x \Gamma\in L_1(Q_T)$ with $\partial_x\Gamma = \partial_x\sigma(\Gamma)/\sigma'(\Gamma)$. Consequently, we may pass to the limit in \eqref{c110} and deduce from \eqref{c114}, \eqref{c115}, \eqref{c116}, and \eqref{c117} that 
\begin{equation}
\frac{d}{dt} \int_0^L \Gamma\ \psi\ dx = \int_0^L \partial_x\psi\ \left[ -D\ \partial_x\Gamma + \sqrt{h}\ \Gamma\ \overline{J_s} \right]\ dx \label{c118}
\end{equation}
for all $\psi\in W_\infty^1(0,L)$.

\subsection{Identifying $\overline{J_f}$}\label{sec:ijf}

Recalling \eqref{c102}, \eqref{c104b} and the formula
$$
J_{f,\e} = -\partial_x \alpha_1(h_\e) + \frac{a_{2,\e}(h_\e)\ \sqrt{H_\e}}{\sqrt{h_\e a_1(h_\e)}}\ \partial_x \Sigma_\e\,,
$$
the key toward the identification of the limit of $J_{f,\e}$ is the behavior as $\e\to 0$ of the term involving $\partial_x \Sigma_\e$. 
At this point, we observe that \eqref{b109} and \eqref{c11} guarantee that $\left( \sqrt{H_\e}\ \partial_x \Sigma_\e\right)_\e$ is bounded in $L_2(Q_T)$ for all $T>0$, so that this quantity has weak cluster points in $L_2(Q_T)$. However, nothing is known so far on $\left( \partial_x \Sigma_\e \right)_\e$ and it is yet unclear whether these cluster points can be determined in terms of $h$ and $\sigma(\Gamma)$. The aim of the next result is to remedy to this fact.

\begin{lemma}\label{lec9}
Given $T>0$, the family $\left( \partial_x \Sigma_\e \right)_\e$ is bounded in $L_2(0,T;L_1(0,L))$ and relatively weakly sequentially compact in $L_1(Q_T)$.
\end{lemma}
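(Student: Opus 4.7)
My plan is to deduce both conclusions from a pointwise-in-$t$ $L^1$ contraction estimate for $v_\e := \partial_x \Sigma_\e$ against $\partial_x \sigma(\Gamma_\e)$, combined with the bounds \eqref{c19} and \eqref{c20} already in hand. The starting point is to differentiate \eqref{b10} in $x$; together with the Neumann conditions on $\Sigma_\e$, this yields the two-point problem
\[
v_\e - \e^2 \partial_x^2(H_\e v_\e) = \partial_x \sigma(\Gamma_\e) \text{ in } (0,L),\qquad v_\e(0) = v_\e(L) = 0.
\]

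To obtain the pointwise $L^1$ contraction, I would multiply this equation by a smooth odd regularization $s_\delta$ of the sign function satisfying $s_\delta(0)=0$, $s_\delta'\ge 0$, $|s_\delta|\le 1$ (for instance $s_\delta(r) = r/\sqrt{r^2+\delta}$), and integrate over $(0,L)$. The Dirichlet condition $v_\e(0)=v_\e(L)=0$ together with $s_\delta(0)=0$ eliminates every boundary contribution from the two integrations by parts. The second-order term then reduces to the nonnegative gradient piece $\e^2\int H_\e s_\delta'(v_\e)(\partial_x v_\e)^2\,dx$ plus a curvature error $-\e^2 \int H_\e'' G_\delta(v_\e)\,dx$ with $G_\delta(r):=\int_0^r \rho\,s_\delta'(\rho)\,d\rho$ uniformly bounded and tending pointwise to $0$ as $\delta\to 0$. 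Passing to the limit via dominated convergence on the error term and Fatou's lemma on the principal term produces
\[
\|v_\e(t,\cdot)\|_{L^1(0,L)} \le \|\partial_x \sigma(\Gamma_\e)(t,\cdot)\|_{L^1(0,L)} \quad \text{for a.e. } t.
\]
Squaring and integrating in $t$, the $L_2(0,T;L_1(0,L))$ bound follows immediately from \eqref{c19}.

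For weak sequential compactness in $L_1(Q_T)$, Dunford--Pettis reduces the task to checking equi-integrability of the $L^1$-bounded family $(v_\e)_\e$. I would exploit the linearity of the elliptic map $g\mapsto v$ and, for a threshold $\lambda>0$, decompose $g_\e := \partial_x \sigma(\Gamma_\e) = g_{\e,\lambda} + g_\e^\lambda$ with $g_{\e,\lambda}:=g_\e\mathbf{1}_{|g_\e|\le\lambda}$, which induces a matching splitting $v_\e = v_{\e,\lambda}+v_\e^\lambda$. The pointwise contraction applied to $v_\e^\lambda$ gives
\[
\|v_\e^\lambda\|_{L^1(Q_T)} \le \|g_\e^\lambda\|_{L^1(Q_T)} = \int_{Q_T} |\partial_x \sigma(\Gamma_\e)|\,\mathbf{1}_{|\partial_x \sigma(\Gamma_\e)|>\lambda}\,dxdt,
\]
which tends to $0$ uniformly in $\e$ as $\lambda\to\infty$ thanks to \eqref{c20}. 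The main obstacle is then to secure, for each fixed $\lambda$, a uniform-in-$\e$ bound on $v_{\e,\lambda}$ in a space strictly better than $L^1$; I would try to obtain such a bound in $L^p(Q_T)$ with some $p>1$ close to $1$ by testing the linear equation against $|v_{\e,\lambda}|^{p-2} v_{\e,\lambda}$ and absorbing the sign-indefinite curvature term produced by $\e^2 H_\e'' = H_\e - h_\e$ using the $L_5(0,T;L_\infty(0,L))$-bound \eqref{c12} on $h_\e$ (and the analogous bound on $H_\e$ inherited via $\mathcal{N}_\e$). H\"older's inequality $\int_E |v_{\e,\lambda}|\,dxdt \le |E|^{1-1/p}\|v_{\e,\lambda}\|_{L^p(Q_T)}$ would then deliver the spatial equi-integrability of $v_{\e,\lambda}$, and combining with the uniform smallness of $v_\e^\lambda$ as $\lambda\to\infty$ yields the claimed weak compactness.
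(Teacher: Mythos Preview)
Your derivation of the pointwise $L^1$-contraction $\|v_\e(t)\|_1\le\|\partial_x\sigma(\Gamma_\e)(t)\|_1$ is correct and is exactly what the paper does (the paper uses $\Phi_\delta(r)=\sqrt{r^2+\delta^2}-\delta$ in place of your $s_\delta$, but the computation is the same).  The $L_2(0,T;L_1(0,L))$ bound then follows from \eqref{c19} as you say.

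For the weak compactness part, your splitting strategy is different from the paper's, and the specific step you propose has a gap.  After testing the equation for $v_{\e,\lambda}$ with $|v_{\e,\lambda}|^{p-2}v_{\e,\lambda}$, the curvature term is $\frac{p-1}{p}\int (H_\e-h_\e)|v_{\e,\lambda}|^p$.  Using the $L_5(0,T;L_\infty)$ bound on $h_\e,H_\e$ does \emph{not} close this: at times where $\|H_\e(t)\|_\infty$ is large you cannot absorb $\frac{p-1}{p}\|H_\e(t)\|_\infty\int|v_{\e,\lambda}|^p$ into the left-hand side, regardless of how close $p$ is to $1$, and the $L_5$-in-time control only bounds the measure of such times, not the size of $v_{\e,\lambda}$ there.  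Your approach \emph{can} be salvaged, but the right auxiliary estimate is the weighted one: test the equation for $v_{\e,\lambda}$ with $H_\e v_{\e,\lambda}$ to get $\int H_\e v_{\e,\lambda}^2\le\int H_\e g_{\e,\lambda}^2\le\int H_\e|\partial_x\sigma(\Gamma_\e)|^2$, which is bounded in $L^1(0,T)$ by \eqref{c11}.  Then the $p=2$ test yields $\|v_{\e,\lambda}\|_{L_2(Q_T)}^2\le C(\lambda,T)$ uniformly in $\e$, and your equi-integrability argument goes through.

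The paper avoids the splitting altogether.  It invokes the refined de~la~Vall\'ee--Poussin theorem (Lemma~\ref{leap2}) to produce, from \eqref{c20}, a superlinear convex $\Psi$ with \emph{concave} $\Psi'$ on $[0,\infty)$; the concavity gives $\Psi''\le\Psi''(0)$, so the associated $\Theta_1(r)=\int_0^r s\Psi''(s)\,ds\le\frac{\Psi''(0)}{2}r^2$.  Testing with $\Psi'(v_\e)$ produces the same curvature term $\int H_\e\Theta_1(v_\e)\le\frac{\Psi''(0)}{2}\int H_\e v_\e^2$, and this is bounded directly by \eqref{b109} and \eqref{c11}.  The upshot is a uniform bound on $\int_{Q_T}\Psi(v_\e)$, hence uniform integrability.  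The key ingredient in both routes is the same weighted estimate $\int H_\e v_\e^2\le C$ (or its analog for $v_{\e,\lambda}$); the paper's route reaches it in one step without splitting.
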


In order not to delay further the identification of $\overline{J_f}$, we postpone the proof of Lemma~\ref{lec9}. Let $T>0$. Recalling \eqref{c106f}, we deduce from Lemma~\ref{lec9} that, after possibly extracting a further subsequence, we have
\begin{equation}
\partial_x \Sigma_{\e_k} \rightharpoonup \partial_x \sigma(\Gamma) \;\;\mbox{ in }\;\; L_1(Q_T)\,. \label{c121}
\end{equation}
On the one hand, since $\left( H_{\e_k}/(1+H_{\e_k}) \right)_k$ is bounded due to the positivity of $H_{\e_k}$ and converges a.e. to $h/(1+h)$ by \eqref{c106d}, we use once more Lemma~\ref{leap1} to conclude that 
$$
\sqrt{\frac{H_{\e_k}}{1+H_{\e_k}}}\ \partial_x \Sigma_{\e_k} \rightharpoonup \sqrt{\frac{h}{1+h}}\ \partial_x \sigma(\Gamma) \;\;\mbox{ in }\;\; L_1(Q_T)\,. 
$$
On the other hand, it follows from \eqref{b109}, \eqref{c11}, and the positivity of $H_{\e_k}$ that 
$$
\left( \sqrt{\frac{H_\e}{1+H_\e}}\ \partial_x \Sigma_\e \right)_\e \;\;\mbox{ is bounded in }\;\; L_2(Q_T)\,. 
$$
Combining these two properties implies, after possibly extracting a further subsequence, that
\begin{equation}
\sqrt{\frac{H_{\e_k}}{1+H_{\e_k}}}\ \partial_x \Sigma_{\e_k} \rightharpoonup \sqrt{\frac{h}{1+h}}\ \partial_x \sigma(\Gamma) \;\;\mbox{ in }\;\; L_2(Q_T)\,.\label{c120}
\end{equation}
We next observe that Lebesgue's dominated convergence theorem and \eqref{c101} ensure that
$$
\frac{a_{2,\e_k}(h_{\e_k})}{\sqrt{h_{\e_k} a_1(h_{\e_k})}} \longrightarrow \sqrt{\frac{3}{4G}} \;\;\mbox{ in }\;\; L_4(Q_T)\,.
$$
Since $\left( \sqrt{1+H_{\e_k}} \right)_k$ converges toward $\sqrt{1+h}$ in $L_4(Q_T)$ by \eqref{zz2}, we end up with
\begin{equation}
\frac{a_{2,\e_k}(h_{\e_k}) \sqrt{1+H_{\e_k}}}{\sqrt{h_{\e_k} a_1(h_{\e_k})}} \longrightarrow \sqrt{\frac{3 (1+h)}{4G}} \;\;\mbox{ in }\;\; L_2(Q_T)\,. \label{c123}
\end{equation}

We then infer from \eqref{c102}, \eqref{c105}, \eqref{c104b}, \eqref{c120}, and \eqref{c123} that 
\begin{equation}
\overline{J_f} = -\partial_x \alpha_1(h) + \sqrt{\frac{3h}{4G}}\ \partial_x \sigma(\Gamma)  =j_f\,. \label{c124}
\end{equation}
 In particular, thanks to \eqref{c102}, \eqref{c105}, and \eqref{c104b},
\begin{equation}
\sqrt{h}\ \partial_x \sigma(\Gamma) \in L_2(Q_T)\,. \label{barbapapa}
\end{equation}

\begin{proof}[Proof of Lemma~\ref{lec9}]
We put $\xi_\e:= \partial_x \Sigma_\e$. Let $\Theta\in\mathcal{C}^2(\RR)$ be a nonnegative and convex function  satisfying $\Theta(0)=0$ and define the function $\Theta_1$ by $\Theta_1(0)=0$ and $\Theta_1'(r)=r \Theta''(r)$, $r\in\RR$. Since $\xi_\e$ solves
$$
\xi_\e - \e^2\ \partial_x^2 \left( H_\e\ \xi_\e \right) = \partial_x \sigma(\Gamma_\e) \;\;\mbox{ in }\;\; (0,L) \;\;\mbox{ with }\;\; \xi_\e(0)=\xi_\e(L)=0\,,
$$
by \eqref{b10}, we have
\begin{eqnarray*}
\int_0^L \Theta'(\xi_\e)\ \xi_\e\ dx & = & - \e^2\ \int_0^L \Theta''(\xi_\e)\ \partial_x\xi_\e\ \partial_x \left( H_\e\ \xi_\e \right)\ dx  + \int_0^L \Theta'(\xi_\e)\ \partial_x \sigma(\Gamma_\e)\ dx \\
& \le & - \e^2\ \int_0^L \partial_x \Theta_1(\xi_\e)\ \partial_x H_\e\ dx  + \int_0^L \Theta'(\xi_\e)\ \partial_x \sigma(\Gamma_\e)\ dx \,.
\end{eqnarray*}
On the one hand, performing an integration by parts and using \eqref{b9} and the nonnegativity of $\Theta_1$ and $h_\e$ give
$$
- \e^2\ \int_0^L \partial_x \Theta_1(\xi_\e)\ \partial_x H_\e\ dx = \int_0^L \Theta_1(\xi_\e)\ (H_\e-h_\e)\ dx \le \int_0^L \Theta_1(\xi_\e)\ H_\e\ dx\,.
$$
On the other hand, it follows from the convexity of $\Theta$ that
$$
\int_0^L \Theta'(\xi_\e)\ \partial_x \sigma(\Gamma_\e)\ dx \le \int_0^L \left[ \Theta'(\xi_\e)\ \xi_\e - \Theta(\xi_\e) + \Theta\left( \partial_x \sigma(\Gamma_\e) \right) \right]\ dx\,.
$$
Consequently, gathering the previous three inequalities we obtain
\begin{equation}
\int_0^L \Theta(\xi_\e)\ dx \le \int_0^L \Theta_1(\xi_\e)\ H_\e\ dx  + \int_0^L \Theta\left( \partial_x \sigma(\Gamma_\e) \right)\ dx \,. \label{spirou}
\end{equation}

We first use \eqref{spirou} to obtain an $L_1$-bound on $(\xi_\e)_\e$. For $\delta\in (0,1)$ and $r\in\RR$, define $\Phi_\delta(r):= \sqrt{r^2+\delta^2}-\delta$. It is a nonnegative and convex function vanishing at zero and we infer from \eqref{spirou} with $\Theta=\Phi_\delta$ that
$$
\int_0^L \Phi_\delta(\xi_\e)\ dx \le \delta^2\ \int_0^L \left( \frac{1}{\delta} - \frac{1}{\sqrt{\xi_\e^2+\delta^2}} \right)\ H_\e\ dx +  \int_0^L \Phi_\delta\left( \partial_x \sigma(\Gamma_\e) \right)\ dx \,.
$$
We then pass to the limit as $\delta\to 0$ and conclude that $\|\xi_\e\|_1\le \left\| \partial_x \sigma(\Gamma_\e) \right\|_1$. Integrating this inequality with respect to time and using \eqref{c19} then give that
\begin{equation}
\left( \xi_\e \right)_\e \;\;\mbox{ is bounded in }\;\;  L_2(0,T;L_1(0,L))\,. \label{fantasio}
\end{equation}

To improve \eqref{fantasio}, we need a refined version of the de la Vall\'ee-Poussin theorem (recalled in Lemma~\ref{leap2} below) which asserts that the weak compactness \eqref{c20} of $\left( \partial_x \sigma(\Gamma_\e) \right)_\e$ in $L_1(Q_T)$ implies the existence of a nonnegative and even convex function $\Psi\in\mathcal{C}^2(\RR)$ such that $\Psi(0)=0$, $\Psi'$ is concave on $[0,\infty)$,  
\begin{equation}
K(T) := \sup_{\e\in (0,1)} \int_0^T \int_0^L \Psi\left( \partial_x \sigma(\Gamma_\e) \right)\ dxdt < \infty \;\;\mbox{ and }\;\; \lim_{r\to\infty} \frac{\Psi(r)}{r} =  \infty\,. \label{spip}
\end{equation}
Then $ 0 \le \Psi''(r)\le \Psi''(0)$ for $r\in\RR$ and it follows from \eqref{spirou} with $\Theta=\Psi$ that
$$
\int_0^L \Psi(\xi_\e)\ dx \le \frac{\Psi''(0)}{2}\ \int_0^L \xi_\e^2\ H_\e\ dx +  \int_0^L \Psi\left( \partial_x \sigma(\Gamma_\e) \right)\ dx\,.
$$
Integrating over $(0,T)$ and using \eqref{b109}, \eqref{c11}, and \eqref{spip}, we end up with 
$$
\int_0^T \int_0^L \Psi(\xi_\e)\ dxdt \le \frac{C_5 \Psi''(0)}{2} + K(T)\,.
$$
Since $\Psi$ is even and superlinear at infinity by \eqref{spip}, the previous bound  implies the uniform integrability of $(\xi_\e)_\e$ in $L_1(Q_T)$ and the Dunford-Pettis theorem entails the expected result.
\end{proof}

\subsection{Identifying $\overline{J_s}$}\label{sec:ijs}

We first recall that 
$$
J_{s,\e} = \sqrt{\alpha_0(h_\e,H_\e)}\ \partial_x\sigma(\Gamma_\e) - G\ \frac{a_{2,\e}(h_\e)}{\sqrt{h_\e a_1(h_\e)}}\ \frac{b_2(\Gamma_\e)}{\Gamma_\e}\ \partial_x A_\e\,.
$$
Let $T>0$. For $\psi\in L_\infty(Q_T)$ and $\delta\in (0,1)$, we have
\begin{equation}
\int_0^T \int_0^L \psi\ \left( \sqrt{\alpha_0(h_{\e_k},H_{\e_k})}\ \partial_x\sigma(\Gamma_{\e_k}) - \sqrt{h}\ \partial_x\sigma(\Gamma) \right)\ dxdt = I_{1,k} + I_{2,k}(\delta) + I_{3,k}(\delta)\,, \label{c126b}
\end{equation}
with
\begin{eqnarray*}
I_{1,k} & := & \int_0^T \int_0^L \psi\ \left( \sqrt{\alpha_0(h_{\e_k},H_{\e_k})} - \sqrt{h} \right)\ \partial_x\sigma(\Gamma_{\e_k})\ dxdt \,,\\
I_{2,k}(\delta) & := & \int_0^T \int_0^L \psi\ \left( \sqrt{h} - \sqrt{\frac{h}{1+\delta h}} \right)\ (\partial_x\sigma(\Gamma_{\e_k}) -\partial_x\sigma(\Gamma))\ dxdt \,, \\ 
I_{3,k}(\delta) & := & \int_0^T \int_0^L \psi\ \sqrt{\frac{h}{1+\delta h}}\ \left( \partial_x\sigma(\Gamma_{\e_k}) - \partial_x\sigma(\Gamma) \right)\ dxdt \,.
\end{eqnarray*}
We infer from \eqref{c19} and H\"older's inequality that 
\begin{eqnarray*}
\left| I_{1,k} \right| & \le & \|\psi\|_{L_\infty(Q_T)}\ \int_0^T \left\| \sqrt{\alpha_0(h_{\e_k},H_{\e_k})} - \sqrt{h} \right\|_\infty\ \left\| \partial_x\sigma(\Gamma_\e) \right\|_1\ dt \\
& \le & \sqrt{C_7}\ \|\psi\|_{L_\infty(Q_T)}\ \left( \int_0^T \left\| \sqrt{\alpha_0(h_{\e_k},H_{\e_k})} - \sqrt{h} \right\|_\infty^2\ dt \right)^{1/2} \,,
\end{eqnarray*}
whence, by \eqref{c101} and \eqref{zz2},
\begin{equation}
\lim_{k\to\infty} I_{1,k}=0\,. \label{c127b}
\end{equation}
Using again \eqref{c19} along with \eqref{c104c} and \eqref{barbapapa}, we find
\begin{eqnarray*}
\left| I_{2,k}(\delta) \right| & \le & \|\psi\|_{L_\infty(Q_T)}\ \int_0^T \int_0^L \sqrt{\delta}\ h\ \ ( \left| \partial_x\sigma(\Gamma_\e) \right| + \left| \partial_x\sigma(\Gamma) \right|)\ dxdt \\
& \le & \sqrt{\delta}\ \|\psi\|_{L_\infty(Q_T)}\ \int_0^T  \left[ \|h\|_\infty\ \left\| \partial_x\sigma(\Gamma_\e) \right\|_1 + \| h\|_1^{1/2}\ \left\| \sqrt{h}\ \partial_x\sigma(\Gamma) \right\|_2 \right]\ dt \\
& \le &  \sqrt{\delta}\ \|\psi\|_{L_\infty(Q_T)}\ \left[ C_7\ \left( \int_0^T \|h(t)\|_\infty^2\ dt \right)^{1/2} + C(T) \right]\,.
\end{eqnarray*}
Since $h$ belongs to $L_5(0,T;L_\infty(0,L))$, we conclude that 
\begin{equation}
\sup_{k\ge 1}{\left| I_{2,k}(\delta) \right|} \le C(T)\ \sqrt{\delta}\ \|\psi\|_{L_\infty(Q_T)}\,. \label{c128}
\end{equation}
Finally, owing to \eqref{c104}, \eqref{c104b}, and the boundedness of $h/(1+\delta h)$, we have
\begin{equation}
\lim_{k\to \infty} I_{3,k}(\delta) = 0\,. \label{c129b}
\end{equation}
It then follows from \eqref{c126b}, \eqref{c127b}, \eqref{c128}, and \eqref{c129b} that 
$$
\limsup_{k\to \infty} \left| \int_0^T \int_0^L \psi\ \left( \sqrt{\alpha_0(h_{\e_k},H_{\e_k})}\ \partial_x\sigma(\Gamma_{\e_k}) - \sqrt{h}\ \partial_x\sigma(\Gamma) \right)\ dxdt \right| \le C(T)\ \sqrt{\delta}\ \|\psi\|_{L_\infty(Q_T)}\,.
$$
Since $\delta$ is arbitrary in $(0,1)$, we may let $\delta\to 0$ in the previous inequality and realize that
\begin{equation}
\sqrt{\alpha_0(h_{\e_k},H_{\e_k})}\ \partial_x\sigma(\Gamma_{\e_k}) \rightharpoonup \sqrt{h}\ \partial_x\sigma(\Gamma) \;\;\mbox{ in }\;\; L_1(Q_T)\,. \label{c130}
\end{equation}

Next, since
$$
0 \le \frac{a_{2,\e}(h_\e)}{\sqrt{h_\e a_1(h_\e)}} \le \sqrt{\frac{3}{4G}} \;\;\mbox{ and }\;\; 0\le \frac{b_2(\Gamma_\e)}{\Gamma_\e} \le 1\,,
$$
we readily infer from \eqref{c101}, \eqref{c103}, and Lebesgue's dominated convergence theorem that 
$$
\frac{a_{2,\e_k}(h_{\e_k})}{\sqrt{h_{\e_k} a_1(h_{\e_k})}}\ \frac{b_2(\Gamma_{\e_k})}{\Gamma_{\e_k}} \longrightarrow  \sqrt{\frac{3}{4G}} \;\;\mbox{ in }\;\; L_2(Q_T)\,.
$$
Combining this property with \eqref{c133} yields
\begin{equation}
\frac{a_{2,\e_k}(h_{\e_k})}{\sqrt{h_{\e_k} a_1(h_{\e_k})}}\ \frac{b_2(\Gamma_{\e_k})}{\Gamma_{\e_k}}\ \partial_x A_{\e_k} \rightharpoonup \sqrt{\frac{3}{4G}}\ \partial_x\alpha_1(h) \;\;\mbox{ in }\;\; L_1(Q_T)\,. \label{c130b}
\end{equation}

Thanks to \eqref{c106}, \eqref{c130}, and \eqref{c130b}, we have identified $\overline{J_s}$:
\begin{equation}
\overline{J_s} = \sqrt{h}\ \partial_x\sigma(\Gamma) - \sqrt{\frac{3G}{4}}\ \partial_x\alpha_1(h)  =j_s\,. \label{c136}
\end{equation}

\subsection{The energy inequality}\label{sec:tei}

Let $T>0$. Since $\overline{J_f}=j_f$ and $\overline{J_s}=j_s$ by \eqref{c124} and \eqref{c136}, we infer from \eqref{c102}, \eqref{c105}, \eqref{c106}, and \eqref{c104b} that
\begin{eqnarray*}
\int_0^T \int_0^L j_f^2\ dxdt & \le & \liminf_{k\to\infty} \int_0^T \int_0^L J_{f,\e_k}^2\ dxdt\,, \\
\int_0^T \int_0^L j_s^2\ dxdt & \le & \liminf_{k\to\infty} \int_0^T \int_0^L J_{s,\e_k}^2\ dxdt\,, \\
\int_0^T \left\| \partial_x\alpha_1(h)\right\|_2^2\ dt & \le & \liminf_{k\to\infty} \int_0^T \left\| \partial_x\alpha_1(h_{\e_k})\right\|_2^2\ dt \,.
\end{eqnarray*} 
We next set $z_{n,\e} := \sqrt{\min{\{h_\e, n \}}} \partial_x\sigma(\Gamma_\e)$ for $n\ge 1$ and $\e\in(0,1)$ and observe that 
\begin{equation}
\label{t1}
\int_0^T \|z_{n,\e}\|_2^2\ dt \le \int_0^T \left\| \sqrt{h_\e}\ \partial_x\sigma(\Gamma_\e) \right\|_2^2\ dt \le C_5
\end{equation} 
by \eqref{c11}. Fix $n\ge 1$. As $\left( \sqrt{\min{\{ h_{\e_k} , n \}}} \right)_k$ is bounded in $L_\infty(Q_T)$ and converges a.e. toward $\sqrt{\min{\{h , n \}}}$ by \eqref{c101}, it follows from \eqref{c104}, \eqref{c104b}, and Lemma~\ref{leap1} that $(z_{n,\e_k})_k$ converges weakly in $L_1(Q_T)$ toward $\sqrt{\min{\{h,n\}}} \partial_x\sigma(\Gamma)$ and also in $L_2(Q_T)$ according to \eqref{t1} (after possibly extracting a further subsequence). We then infer from \eqref{t1} that
$$
\int_0^T \int_0^L \min{\{h,n\}}\ |\partial_x \sigma(\Gamma)|^2\ dxdt \le \liminf_{k\to\infty} \int_0^T \left\| \sqrt{h_{\e_k}}\ \partial_x\sigma(\Gamma_{\e_k}) \right\|_2^2\ dt\,.
$$
Since the right-hand side of the above inequality does not depend on $n$, Fatou's lemma leads us to 
$$
\int_0^T \int_0^L h\ |\partial_x \sigma(\Gamma)|^2\ dxdt \le \liminf_{k\to\infty} \int_0^T \left\| \sqrt{h_{\e_k}}\ \partial_x\sigma(\Gamma_{\e_k}) \right\|_2^2\ dt\,.
$$
A similar argument ensures that
$$
\int_0^T \int_0^L h\ |\partial_x \sigma(\Gamma)|^2\ dxdt \le \liminf_{k\to\infty} \int_0^T \left\| \sqrt{H_{\e_k}}\ \partial_x\sigma(\Gamma_{\e_k}) \right\|_2^2\ dt\,.
$$
Finally, for $\delta\in (0,1)$ and $\e\in (0,1)$, we define
$$
\zeta_{\delta,\e} := \frac{\partial_x \sigma(\Gamma_\e)}{\sqrt{(B_\e+\delta) |\sigma'(B_\e)|}}
$$
and deduce from \eqref{b8} and \eqref{c11} that
\begin{equation}
\label{t2}
\int_0^T \|\zeta_{\delta,\e}\|_2^2\ dt \le \int_0^T \int_0^L \frac{|\partial_x \sigma(\Gamma_\e)|^2}{\beta_1'(B_\e)} \le C_5\,.
\end{equation}
Owing to \eqref{a6} and \eqref{c106e}, $\left( \left( (B_{\e_k}+\delta) |\sigma'(B_{\e_k})| \right)^{-1/2} \right)_k$ is bounded in $L_\infty(Q_T)$ (by $1/\sqrt{\delta\sigma_0}$) and converges a.e. toward $\left( (\Gamma+\delta) |\sigma'(\Gamma)| \right)^{-1/2}$ in $Q_T$. Using once more \eqref{c104}, \eqref{c104b}, and Lemma~\ref{leap1}, we conclude that $\left( \zeta_{\delta,\e_k} \right)_k$ converges weakly toward $\partial_x\sigma(\Gamma)/\sqrt{(\Gamma+\delta) |\sigma'(\Gamma)|}$ in $L_1(Q_T)$ and also in $L_2(Q_T)$ by virtue of \eqref{t2}. Taking the liminf in \eqref{t2} gives
$$
\int_0^T \int_0^L \frac{|\partial_x \sigma(\Gamma)|^2}{(\Gamma+\delta) |\sigma'(\Gamma)|}\ dxdt \le \liminf_{k\to\infty} \int_0^T \int_0^L \frac{|\partial_x \sigma(\Gamma_{\e_k})|^2}{\beta_1'(B_{\e_k})}\ dxdt\,.
$$
Using again \eqref{a6}, we further deduce
$$
4 \sigma_0\ \int_0^T \|\partial_x\sqrt{\Gamma+\delta}\|_2^2\ dt \le \liminf_{k\to\infty} \int_0^T \int_0^L \frac{|\partial_x \sigma(\Gamma_{\e_k})|^2}{\beta_1'(B_{\e_k})}\ dxdt\,.
$$
The above inequality readily implies that $\sqrt{\Gamma}$ belongs to $L_2(0,T;W_2^1(0,L))$ and 
$$
4 \sigma_0\ \int_0^T \|\partial_x\sqrt{\Gamma}\|_2^2\ dt \le \liminf_{k\to\infty} \int_0^T \int_0^L \frac{|\partial_x \sigma(\Gamma_{\e_k})|^2}{\beta_1'(B_{\e_k})}\ dxdt\,.
$$
Collecting the above information and taking into account \eqref{c3}-\eqref{c5} together with \eqref{c101}, \eqref{c103} we conclude the energy inequality \eqref{energineq} since $\eta=3/4$. 

\appendix
\section{Auxiliary results}

We first recall a classical consequence of the Dunford-Pettis and Egorov theorems, see, e.g. \cite[Lemma~A.2]{LaurencotMischler} for a proof. 
 
\begin{lemma}\label{leap1}
Let $U$ be an open bounded subset of $\RR^m$, $m\ge 1$, and consider two sequences $(v_n)_n$ in $L_1(U)$ and $(w_n)_n$ in $L_\infty(U)$ and functions $v\in L_1(U)$ and $w\in L_\infty(U)$ such that
$$
v_n \rightharpoonup v \;\;\mbox{ in }\;\; L_1(U)\,,
$$
$$
\vert w_n(x)\vert \le C \;\;\mbox{ and }\;\; \lim_{n\to\infty} w_n(x) = w(x) \;\;\mbox{ a.e. in }\;\; U 
$$
for some $C>0$. Then
$$
\lim_{n\to \infty} \int_U \vert v_n\vert\ \vert w_n - w\vert\ dx = 0 \;\;\mbox{ and }\;\; v_n\ w_n \rightharpoonup v\ w \;\;\mbox{ in }\;\; L_1(U)\,.
$$
\end{lemma}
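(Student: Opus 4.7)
The plan is to combine the Dunford--Pettis characterization of weakly compact subsets of $L_1$ with Egorov's theorem in a classical two-step splitting argument. First, since $(v_n)_n$ converges weakly in $L_1(U)$, it is relatively weakly compact in $L_1(U)$, and the Dunford--Pettis theorem then yields uniform integrability: for every $\eta>0$ there exists $\delta_\eta>0$ such that $\sup_n \int_E |v_n|\, dx \le \eta$ whenever $E\subset U$ is measurable with $|E|<\delta_\eta$. In particular, $M:=\sup_n \|v_n\|_1<\infty$.

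Second, because $U$ is bounded (hence has finite Lebesgue measure) and $w_n\to w$ a.e.\ in $U$, Egorov's theorem furnishes, for each $\delta>0$, a measurable subset $U_\delta\subset U$ with $|U\setminus U_\delta|<\delta$ on which $w_n\to w$ uniformly. Given $\eta>0$, I would fix $\delta=\delta_\eta$ and split
\begin{equation*}
\int_U |v_n|\, |w_n-w|\, dx = \int_{U_\delta} |v_n|\, |w_n-w|\, dx + \int_{U\setminus U_\delta} |v_n|\, |w_n-w|\, dx.
\end{equation*}
The first integral is bounded by $M \sup_{U_\delta}|w_n-w|$, which tends to $0$ by uniform convergence on $U_\delta$. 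The second integral is bounded by $2C\int_{U\setminus U_\delta}|v_n|\,dx \le 2C\eta$ by the pointwise bound $|w_n-w|\le 2C$ and the uniform integrability of $(v_n)_n$. Letting first $n\to\infty$ and then $\eta\to 0$ yields the first conclusion.

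For the weak convergence $v_n w_n \rightharpoonup v w$ in $L_1(U)$, I would test against an arbitrary $\psi\in L_\infty(U)$ and decompose
\begin{equation*}
\int_U v_n w_n \psi\, dx - \int_U v w \psi\, dx = \int_U v_n (w_n-w)\psi\, dx + \int_U (v_n-v)\, w\psi\, dx.
\end{equation*}
The first term is bounded by $\|\psi\|_\infty \int_U |v_n|\,|w_n-w|\,dx$, which vanishes by the first conclusion just established. The second term vanishes because $w\psi\in L_\infty(U)$ is a legitimate test function for the weak convergence $v_n\rightharpoonup v$ in $L_1(U)$. The only delicate point is the coupling of the Dunford--Pettis uniform integrability with the Egorov exceptional set: the two $\delta$'s must be chosen in the right order (first $\eta$, then $\delta_\eta$ from Dunford--Pettis, then Egorov's set $U_{\delta_\eta}$). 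Beyond this bookkeeping, the argument is entirely standard.
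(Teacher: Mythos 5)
Your proof is correct and follows exactly the route the paper indicates: the paper gives no proof of its own but describes the lemma as ``a classical consequence of the Dunford--Pettis and Egorov theorems'' and cites \cite[Lemma~A.2]{LaurencotMischler}, and your uniform-integrability/Egorov splitting (with the correct order of choosing $\eta$, then $\delta_\eta$, then $U_{\delta_\eta}$) is precisely that standard argument. No gaps.
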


We next recall a refined version of the de la  Vall\'ee-Poussin theorem \cite{Le77}. 

\begin{lemma} \label{leap2}
Let $U$ be an open bounded subset of $\RR^m$, $m\ge 1$, and $\mathcal{F}$ a subset of $L_1(U)$. The following two statements are equivalent:
\begin{itemize}
\item[(i)] $\mathcal{F}$ is uniformly integrable, that is, $\mathcal{F}$ is a bounded subset of $L_1(U)$ such that 
$$
\lim_{c\to \infty}\ \sup_{f\in \mathcal{F}}\ \int_{\{ |f|\ge c \}} |f|\ dx = 0\,.
$$
\item[(ii)] $\mathcal{F}$ is a bounded subset of $L_1(U)$ and there exists a convex function $\Phi\in \mathcal{C}^\infty([0,\infty))$ 
such that $\Phi(0) = \Phi'(0) = 0$, $\Phi'$ is a concave function, 
$$
\lim_{r\to \infty} \frac{\Phi(r)}{r} = \lim_{r\to \infty} \Phi'(r) = \infty, \;\;\mbox{ and }\;\; \sup_{f\in \mathcal{F}} \int_U \Phi\left({ | f| }\right)\ dx <\infty\,.
$$
\end{itemize}
\end{lemma}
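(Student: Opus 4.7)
The plan is to prove the two implications separately; the implication (ii) $\Rightarrow$ (i) is routine, while (i) $\Rightarrow$ (ii) requires an explicit construction of $\Phi$ adapted to the modulus of uniform integrability of $\mathcal{F}$.

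For (ii) $\Rightarrow$ (i), set $K := \sup_{f \in \mathcal{F}} \int_U \Phi(|f|)\,dx$. Superlinearity yields, for each $\epsilon > 0$, a threshold $M = M(\epsilon)$ with $r \le \epsilon\,\Phi(r)$ for $r \ge M$, so that $\int_{\{|f| \ge M\}} |f|\,dx \le \epsilon K$ uniformly in $f \in \mathcal{F}$. Letting $\epsilon \to 0$ gives uniform integrability, and $\int_U |f|\,dx \le M|U| + \epsilon K$ gives the $L_1$-bound.

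For (i) $\Rightarrow$ (ii), I would define the nonincreasing modulus $\alpha(c) := \sup_{f \in \mathcal{F}} \int_{\{|f|>c\}} |f|\,dx$, which is finite at $c=0$ (by the $L_1$-bound in (i)) and tends to $0$ (by uniform integrability). For any convex $\Phi \in C^2([0,\infty))$ with $\Phi(0)=\Phi'(0)=0$, Fubini's theorem together with the pointwise bound $\int_{\{|f|>s\}}(|f|-s)\,dx \le \alpha(s)$ gives
\begin{equation*}
\int_U \Phi(|f|)\,dx = \int_0^\infty \Phi''(s)\,\Bigl(\int_{\{|f|>s\}}(|f|-s)\,dx\Bigr)ds \le \int_0^\infty \Phi''(s)\,\alpha(s)\,ds.
\end{equation*}
The problem thus reduces to constructing $\Phi'' \in C^\infty([0,\infty))$ nonnegative and nonincreasing (so that $\Phi'$ is concave) with $\int_0^\infty \Phi''\,dr = \infty$ (which forces $\Phi'(r)\to\infty$, and hence $\Phi(r)/r\to\infty$) and $\int_0^\infty \alpha\,\Phi''\,dr < \infty$.

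To construct $\Phi''$, I would pick recursively $t_n \nearrow \infty$ with $\alpha(t_n) \le n^{-3}$ and the gaps $d_n := t_{n+1} - t_n$ nondecreasing (both achievable by further enlarging each $t_n$). Let $\Phi'$ be the continuous piecewise-affine function with $\Phi'(0)=0$ and $\Phi'(t_n) = n$; since its successive slopes $d_n^{-1}$ are nonincreasing, $\Phi'$ is concave and strictly increases to $\infty$. Its derivative $\Phi''(r) = d_n^{-1}$ on $[t_n, t_{n+1})$ is nonincreasing and satisfies $\int_{t_n}^{t_{n+1}} \alpha\,\Phi''\,dr \le \alpha(t_n) \le n^{-3}$, so $\int_0^\infty \alpha\,\Phi''\,dr < \infty$; meanwhile $\int_0^\infty \Phi''\,dr = \Phi'(\infty) = \infty$. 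Convolving $\Phi''$ with a smooth nonnegative even mollifier of small support preserves nonincrease, nonnegativity, and both integral conditions, providing the required $C^\infty$ function; setting $\Phi(r) := \int_0^r \int_0^s \Phi''(u)\,du\,ds$ then meets all requirements. The main delicate point is simultaneously meeting the two opposing integral constraints on $\Phi''$ (non-integrable against $dr$ yet integrable against $\alpha\,dr$) together with smoothness and monotonicity; the piecewise-affine concavification handles this by spacing the breakpoints $t_n$ in accordance with the decay rate of $\alpha$, no matter how slow.
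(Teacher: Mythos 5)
Your proof is correct. Note that the paper itself does not prove this lemma: it is quoted as a known refinement of the de la Vall\'ee-Poussin theorem, with references to L\^e's thesis and to Dellacherie--Meyer and Rao--Ren (the latter two without the concavity of $\Phi'$), so there is no in-paper argument to compare against. Your construction is the standard one for this refinement and all the essential steps are sound: the direction (ii) $\Rightarrow$ (i) via superlinearity, the layer-cake identity $\int_U \Phi(|f|)\,dx = \int_0^\infty \Phi''(s)\int_U(|f|-s)_+\,dx\,ds$ reducing everything to the two integral constraints on $\Phi''$, and the piecewise-affine concave $\Phi'$ with breakpoints spaced according to the decay of $\alpha$. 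Two small points are worth making explicit. First, concavity of $\Phi'$ on the initial segment requires the slope $1/t_1$ on $[0,t_1]$ to dominate the slope $1/d_1$ on $[t_1,t_2]$, i.e.\ $t_2\ge 2t_1$; this is covered by your remark that the $t_n$ may be enlarged, but it is the one constraint not automatic from ``gaps nondecreasing'' as you indexed them. Second, the mollification near $r=0$ needs $\Phi''$ extended to the left by its value at $0$ so that monotonicity is preserved and $\int_0^\infty(\Phi''*\rho_\delta)\,dr\ge\int_\delta^\infty\Phi''\,dr=\infty$ while $\int_0^\infty\alpha\,(\Phi''*\rho_\delta)\,dr\le\alpha(0)\delta/t_1+\int_0^\infty\alpha(s)\Phi''(s)\,ds<\infty$ using that $\alpha$ is nonincreasing; both verifications are routine but should be recorded if the proof is to be self-contained.
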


A proof of Lemma~\ref{leap2} may also be found in \cite{DM75} and \cite[Theorem~I.1.2]{RR91} but without the concavity condition on the first derivative of $\Phi$. Since the sequential weak compactness in $L^1(U)$ implies (and is actually equivalent to, thanks to the boundedness of $U$) the uniform integrability by the Dunford-Pettis theorem, the existence of the function $\Psi$ in the proof of Lemma~\ref{lec9} indeed follows from \eqref{c20} and Lemma~\ref{leap2}. 

\section*{Acknowledgments}
This research was initiated during a visit of J.E. and Ch.W. at the Universit\'e de Toulouse. They greatly acknowledge the hospitality.
Part of the work was done while M.H. and Ph.L. enjoyed the hospitality of the Institut f\"ur Angewandte Mathematik at the Leibniz Universit\"at Hannover.



\end{document}